\documentclass[a4paper,reqno,11pt]{amsart}
\usepackage[a4paper,left=2cm,right=2cm,top=2.5cm,bottom=2.5cm]{geometry}
\usepackage[onehalfspacing]{setspace}
\usepackage[colorlinks=true, linkcolor=magenta, citecolor=cyan]{hyperref}
\usepackage{fontspec}
\usepackage{amsmath}
\usepackage{amsthm}
\usepackage{amssymb}
\usepackage{amsfonts}
\usepackage{mathtools}
\usepackage{colonequals}
\usepackage{dsfont}
\usepackage{enumitem}
\usepackage{mathdots}
\usepackage{fancyhdr}
\usepackage{mdframed}
\usepackage{csquotes}
\usepackage{bm}
\usepackage[normalem]{ulem}

\usepackage{tikz-cd}
\tikzset{
  symbol/.style={
    draw=none,
    every to/.append style={
      edge node={node [sloped, allow upside down, auto=false]{$#1$}}}
  }
}

\uccode`ß="1E9E

\newtheorem{introtheorem}{Theorem}

\newtheorem{theorem}{Theorem}[section]
\newtheorem{proposition}[theorem]{Proposition}
\newtheorem{lemma}[theorem]{Lemma}
\newtheorem{corollary}[theorem]{Corollary}

\theoremstyle{definition}
\newtheorem{introdefinition}[introtheorem]{Definition}
\newtheorem{definition}[theorem]{Definition}
\newtheorem{notation}[theorem]{Notation}
\newtheorem{recollection}[theorem]{Recollection}

\theoremstyle{remark}
\newtheorem{remark}[theorem]{Remark}
\newtheorem{example}[theorem]{Example}

\usepackage[backend=biber,maxalphanames=5,maxbibnames=5,style=alphabetic]{biblatex}
\AtBeginBibliography{\small}

\newcommand{\br}[1]{\left(#1\right)}
\newcommand{\sqb}[1]{\left\lbrack #1\right\rbrack}
\newcommand{\abs}[1]{\left\lvert #1\right\rvert}
\newcommand{\iso}{\stackrel{\sim}{\rightarrow}}

\newcommand{\N}{\mathbb{N}}

\newcommand{\B}{\mathcal{B}}
\newcommand{\C}{\mathcal{C}}
\newcommand{\D}{\mathcal{D}}
\newcommand{\E}{\mathcal{E}}
\newcommand{\F}{\mathcal{F}}

\newcommand{\I}{\mathcal{I}}
\newcommand{\J}{\mathcal{J}}

\newcommand{\T}{\mathcal{T}}

\newcommand{\const}{\mathrm{const}}

\DeclareMathOperator*{\colim}{colim}

\newcommand{\Map}{\operatorname{Map}}
\newcommand{\Hom}{\operatorname{Hom}}

\newcommand{\id}{\operatorname{id}}
\newcommand{\ev}{\operatorname{ev}}
\newcommand{\Fun}{\operatorname{Fun}}

\newcommand{\Un}{\mathrm{Un}}

\newcommand{\Set}{\mathrm{Set}}
\newcommand{\Cat}{\mathrm{Cat}}
\newcommand{\An}{\mathrm{An}}

\newcommand{\Seg}{\mathrm{Seg}}
\newcommand{\PSh}{\mathrm{PSh}}
\newcommand{\Ar}{\mathrm{Ar}}

\newcommand{\lax}{\mathrm{lax}}
\newcommand{\oplax}{\mathrm{oplax}}
\newcommand{\op}{\mathrm{op}}

\newcommand{\singlequote}[1]{\textquotesingle #1\textquotesingle}

\newcommand{\pb}{\arrow[dr,phantom,very near start,"\lrcorner"]}

\newcommand{\bfCat}{\mathbf{Cat}}
\newcommand{\bbCat}{\mathbb{C}\mathrm{at}}
\newcommand{\Ambi}{\mathrm{Ambi}}
\newcommand{\bfAmbi}{\mathbf{Ambi}}
\newcommand{\bbAmbi}{\mathbb{A}\mathrm{mbi}}
\newcommand{\Ortho}{\mathrm{Ortho}}

\newcommand{\Fact}{\mathrm{Fact}}
\newcommand{\bfFact}{\mathbf{Fact}}
\newcommand{\bbFact}{\mathbb{F}\mathrm{act}}
\newcommand{\bfFun}{\mathbf{Fun}}
\newcommand{\bbFun}{\mathbb{F}\mathrm{un}}
\newcommand{\DCat}{\mathrm{DCat}}
\newcommand{\bfCocart}{\mathbf{Cocart}}

\newcommand{\eg}{\mathrm{eg}}
\newcommand{\ing}{\mathrm{in}}
\newcommand{\Sq}{\mathrm{Sq}}
\newcommand{\eq}{\mathrm{eq}}
\newcommand{\Ab}{\mathrm{Ab}}
\newcommand{\Gpd}{\mathrm{Gpd}}
\newcommand{\Hor}{\mathrm{Hor}}
\newcommand{\Ver}{\mathrm{Ver}}
\newcommand{\uple}{\mathrm{uple}}

\newcommand{\bbD}{\mathbb{D}}

\author{Thorger Gei\ss}
\title{Factorization Systems on $\infty$-Categories: Un/straightening and Monadicity}
\address{FB Mathematik und Informatik, Universität Münster, Einsteinstraße 62, 48149 Münster, Germany}
\email{tgeiss@uni-muenster.de}

\begin{document}

\begin{abstract}
We prove two structural results about factorization systems on $\infty$-categories. Firstly, we classify factorization systems on the total spaces of a class of fibrations of $\infty$-categories in terms of factorization systems on the base and the fibers. This will be interpreted as an un/straightening equivalence for $\infty$-categories equipped with a factorization system, using Juran's double $\infty$-categorical framework. Along the way, we develop the general theory of lifting through faithful functors of $n$-uple $\infty$-categories. Secondly, we prove that the forgetful functor from $\infty$-categories equipped with a factorization system to $\infty$-categories is a monadic right adjoint.
\end{abstract}

\maketitle

\tableofcontents

\section{Introduction}

A factorization system on a category $\C$ consists of two classes of morphisms in $\C$—abusing notation due to Barwick, we call these \textit{egressive} and \textit{ingressive} morphisms, denoted by $\twoheadrightarrow$ and $\rightarrowtail$, respectively—such that every morphism in $\C$ factors as an egressive morphism followed by an ingressive morphism and the two classes are \textit{orthogonal}, i.e. every lifting problem
\begin{equation*}
\begin{tikzcd}
a\arrow[d,twoheadrightarrow]\arrow[r] & x\arrow[d,rightarrowtail]\\
b\arrow[r]\arrow[ur,dashed] & y
\end{tikzcd}
\end{equation*}
admits a unique solution. In presence of the former condition, this is equivalent to the egressive-ingressive factorizations being unique up to unique isomorphism.

Factorization systems are classical tools in category theory. In this paper, we study the \textit{homotopy-coherent} generalization of factorization systems, i.e. factorization systems on $\infty$-categories. These were already introduced in the foundational work on $\infty$-categories of Joyal and Lurie (\cite{Joyal}, \cite{HTT}) and enjoy a wide range of applications spanning $\infty$-topos theory (see e.g. \cite{ABFJ}), algebraic patterns \& operads (see e.g. \cite{ChuHaugseng}, \cite{HA}) or the general theory of $(\infty,n)$-categories (see e.g. \cite{Soergel}, \cite{ORW}).

The first question we study in this paper is the \singlequote{(dis-)integration} of factorization systems: given some sort of \singlequote{fibration} $p\colon\E\rightarrow\C$, is it possible to characterize factorization systems on the total space $\E$ in terms of factorization systems on the base $\C$ and on the fibers $\E_c,\,c\in\C$, assuming compatibility with $p$? To this end, we introduce the following class of fibrations of $\infty$-categories equipped with a factorization system.

\begin{introdefinition}[Definition \ref{AmbiFibDefn}]
An \textit{ambifibration} is a functor $p\colon\E^{\dagger}\rightarrow\C^{\dagger}$ of $\infty$-categories equipped with a factorization system (i.e. it preserves both egressive and ingressive morphisms), which admits cocartesian resp. cartesian lifts of egressive resp. ingressive morphisms, which are again egressive resp. ingressive.
\end{introdefinition}

In this context, we obtain the following theorem answering the previous question. It is proven by an explicit construction, which unifies some constructions that are spread throughout the literature.

\begin{introtheorem}[Theorem \ref{FactorizationSystemNaturality}]\label{FactorizationSystemIntro}
Let $\C^{\dagger}$ be an $\infty$-category equipped with a factorization system and $p\colon\E\rightarrow\C$ a functor of $\infty$-categories, which admits cocartesian resp. cartesian lifts of egressive resp. ingressive morphisms. Then the space of ambifibrations $p^{\dagger}\colon\E^{\dagger}\rightarrow\C^{\dagger}$ lifting $p$ is discrete and equivalent to the set of collections of factorization systems on the fibers $\E_c,\,c\in\C$ such that cocartesian resp. cartesian transport along egressive resp. ingressive morphisms preserves egressive resp. ingressive morphisms in the fibers.
\end{introtheorem}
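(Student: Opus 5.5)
Discreteness comes first. A factorization system on an $\infty$-category is determined by its class of egressive morphisms, which is a collection of equivalence classes of morphisms, i.e.\ a subset of $\pi_0\Map(\Delta^1,\E)$. So the space of factorization systems on $\E$ is a subset of a power set and hence discrete. The space of ambifibration structures on $p$ is the subspace of those factorization systems for which $p$ preserves egressive and ingressive morphisms and the required lifts exist and lie in the right classes. It is therefore discrete too, and the claim becomes a bijection of sets. In one direction, an ambifibration $p^\dagger$ is sent to the restricted classes $\E_c^{\eg}=\E^{\eg}\cap\E_c$ and $\E_c^{\ing}=\E^{\ing}\cap\E_c$.

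\textbf{Step 1: restriction gives fiberwise factorization systems.} Take a morphism $f$ in $\E_c$ and its $\E$-factorization $f=i\circ e$. Applying $p$ gives a factorization of $\id_c$ into an egressive map $p(e)$ followed by an ingressive map $p(i)$. By uniqueness of factorizations this pair is equivalent to $(\id,\id)$, so after transport the factorization lies in $\E_c$. Orthogonality in $\E_c$ follows from orthogonality in $\E$, since the unique lift lies over $\id_c$ by the same uniqueness argument. Transport compatibility comes from a factorization of composites. For egressive $s\colon c\to d$ and an egressive $e$ in $\E_c$, the map $s_!e$ is characterized by $\bar s'\circ e\simeq s_!e\circ\bar s$, where $\bar s,\bar s'$ are the cocartesian lifts. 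The left side is egressive, and $\bar s$ is egressive. I then need a left-cancellation property: if $g\circ\bar s$ is egressive, $\bar s$ is cocartesian and egressive, and $g$ is vertical, then $g$ is egressive. To prove it, factor $g=i\circ e'$ inside the fiber $\E_d$. Then $g\circ\bar s=i\circ(e'\circ\bar s)$ is an egressive/ingressive factorization of an egressive map, so $i$ is an equivalence. The case of ingressive maps and cartesian transport is dual.

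\textbf{Step 2: reconstruction.} Conversely, start from fiberwise systems compatible with transport. I declare $f\colon x\to y$ over $u\colon c\to d$ egressive if $p(f)$ is egressive and the induced vertical map $u_!x\to y$ is egressive in $\E_d$. I declare it ingressive if $p(f)$ is ingressive and the induced vertical map $x\to u^*y$ is ingressive in $\E_c$. To factor $f$, first factor $u=m\circ e$ in $\C$. Then $f$ factors through the cocartesian lift $x\to e_!x$, a vertical map $v\colon e_!x\to m^*y$, and the cartesian lift $m^*y\to y$. Factoring $v=i\circ e''$ in the fiber over the middle object gives the factorization of $f$ as $(m\text{-cart})\circ i$ after $e''\circ(e\text{-cocart})$. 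One checks that each of these two composites lies in the claimed class. Here transport compatibility is used, to see that a composite of an egressive vertical map and a cocartesian egressive map is again egressive in the new sense.

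\textbf{Step 3: orthogonality and closure, the main obstacle.} I expect the hardest part to be showing that the two classes in Step 2 are orthogonal, or equivalently that the factorizations are unique, in an $\infty$-categorical rather than $1$-categorical sense. My plan is to reduce orthogonality to the case of pure types: cocartesian or cartesian lifts against arbitrary maps, and vertical egressive against vertical ingressive maps. For the pure cases I would use the universal properties of (co)cartesian morphisms, which turn mapping spaces in $\E$ into fibered mapping spaces over $\C$. The lifting space then becomes a pullback of lifting spaces in $\C$ and in a fiber, and both of those are contractible. Standard closure of left and right classes under composition handles the general case. That the two constructions are mutually inverse follows from Step 1 together with the observation that an egressive $f$ in an ambifibration factors as a cocartesian lift followed by a vertical map, and that vertical map is egressive by the cancellation lemma.
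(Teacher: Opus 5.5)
\textbf{The gap is in Step 3, in the case you call ``vertical egressive against vertical ingressive''.} Your Steps 1 and 2 and the final comparison follow the paper: the same two classes, the same factorization through the vertical map $g_{\#}x\to h^{\ast}z$, and cancellation for transport stability. For injectivity the paper instead notes that the constructed classes are contained in those of any ambifibration lifting $p$ and uses mutual orthogonality; your cancellation argument works too.

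The difficulty is that $g'\colon b'\twoheadrightarrow b$ and $f'\colon x\rightarrowtail x'$ generally live in different fibers $\E_c$ and $\E_e$, with $c=p(b)$ and $e=p(x)$. So there is no single fiber in which the lifting problem takes place. Since $p(g')$ and $p(f')$ are identities, you can check the orthogonality square fiberwise over $\Map_{\C}(c,e)$. But a point $h\colon c\to e$ is neither egressive nor ingressive, so $p$ need not have any (co)cartesian lift of $h$. Even if it had one, transport along $h$ is not assumed to preserve anything. So the universal properties you invoke do not yet describe $\Map_{\E}(b,x)_h$.

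The missing idea is to factor $h\simeq h_{\ing}\circ h_{\eg}$ through some $u$ and use both universal properties. This gives $\Map_{\E}(b,x)_h\simeq\Map_{\E_u}((h_{\eg})_{\#}b,h_{\ing}^{\ast}x)$, naturally in $b$ and $x$. The fiber of the square over $h$ is then the orthogonality square in $\E_u$ of $(h_{\eg})_{\#}g'$ against $h_{\ing}^{\ast}f'$. That square is cartesian exactly because transport stability makes these maps egressive resp.\ ingressive in $\E_u^{\dagger}$.

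This is the one place where the transport-stability hypothesis is indispensable, and your Step 3 never uses it. In Step 2 you cite it for the factorization, where it is not needed: $e''\circ(\text{cocart})$ and $(\text{cart})\circ i$ are in the two classes by definition. A plan that omits the hypothesis here cannot work, because without it the constructed classes need not be orthogonal. Take $\C^{\dagger}=[1]^{\sharp}$, a cocartesian fibration $\E\to[1]$ with transport $F\colon\E_0\to\E_1$, and fibers $\E_0^{\sharp}$ and $\E_1^{\flat}$. For $g'\colon b'\to b$ in $\E_0$, cocartesianness gives $\Map_{\E}(b,z)\simeq\Map_{\E_1}(Fb,z)$ for $z\in\E_1$. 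So the orthogonality square of the vertical egressive $g'$ against the vertical ingressive $Fg'$ is that of $Fg'$ against itself in $\E_1$. It fails whenever $Fg'$ is not an equivalence, which is exactly when transport stability fails.
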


Our next goal is to interpret this as an un/straightening equivalence for $\infty$-categories equipped with a factorization system. To state this, we will employ the double $\infty$-categorical framework of Juran \cite{Juran}. Recall that a \textit{double $\infty$-category} is a homotopy-coherent categorical structure that comes equipped with two types of morphisms, namely \textit{horizontal} and \textit{vertical} morphisms that have respective composition laws, as well as \textit{$2$-cells} or \textit{squares}, depicted as
\begin{equation*}
\begin{tikzcd}
a\arrow[r,""{name=U,below}]\arrow[d] & b\arrow[d]\\
c\arrow[r,""{name=D,above}]\arrow[from=U,to=D,Rightarrow] & d,
\end{tikzcd}
\end{equation*}
which can be composed both horizontally and vertically and satisfy an evident \textit{interchange law}.

Every $\infty$-category equipped with a factorization system $\C^{\dagger}$ gives rise to a double $\infty$-category $\iota(\C^{\dagger})$ that has the same objects as $\C$, as horizontal resp. vertical morphisms the egressive resp. ingressive morphisms in $\C^{\dagger}$ and as squares the appropriate commutative squares in $\C$. Informally, this double $\infty$-category encodes how to obtain the egressive-ingressive factorization of a morphism composed \singlequote{the wrong way}.

\begin{introtheorem}[\protect{Un/Straightening for Factorization Systems, Theorem \ref{UnStraightening}}]\label{UnStraighteningIntro}
Let $\C^{\dagger}$ be an $\infty$-category equipped with a factorization system. Then there is a natural equivalence
\begin{equation*}
\Ambi(\C^{\dagger})^{\simeq}\simeq\Map_{\DCat_{\infty}}(\iota(\C^{\dagger})^{v-\op},\bbFact_{\infty})
\end{equation*}
between the space of ambifibrations over $\C^{\dagger}$ and the mapping space of double functors $\iota(\C^{\dagger})^{v-\op}\rightarrow\bbFact_{\infty}$, where $\bbFact_{\infty}$ is a certain double $\infty$-category of $\infty$-categories equipped with a factorization system.
\end{introtheorem}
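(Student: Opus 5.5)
The strategy is to reduce the statement to an existing un/straightening equivalence for functors into double $\infty$-categories of $\infty$-categories. The extra factorization-system data is then layered on using Theorem~\ref{FactorizationSystemIntro}, and the lifting theory through faithful functors of $n$-uple $\infty$-categories announced in the abstract. There are three steps.

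\textbf{Step 1: underlying equivalence without factorization systems.} Consider functors $p\colon\E\rightarrow\C$ that admit cocartesian lifts of egressive morphisms and cartesian lifts of ingressive morphisms, with no factorization system on $\E$. I expect the following, with $\bbCat_{\infty}$ denoting the double $\infty$-category of $\infty$-categories and functors in both directions. The space of such $p$ should be identified with double functors $\iota(\C^{\dagger})^{v-\op}\rightarrow\bbCat_{\infty}$ whose squares are Beck--Chevalley-type mates. The plan is to use Juran's framework here. Restricting to horizontal morphisms gives a functor out of $\C_{\eg}$ whose values are cocartesian fibrations, which we straighten. Restricting to vertical morphisms gives a functor out of $\C_{\ing}^{\op}$ whose values are cartesian fibrations, which we also straighten. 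The squares of $\iota(\C^{\dagger})$ are exactly the factorization squares. They encode the comparison between ``cocartesian transport after cartesian transport'' and the transport along the refactored composite.

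This identification should follow from the fact that $\C$ is recovered from $\C_{\eg}$ and $\C_{\ing}$ via the factorization system. Concretely, $\C$ is the localization of the twisted double-categorical nerve, or equivalently $\C\simeq$ the $\infty$-category of horizontal-then-vertical composable strings, modulo squares. One then applies straightening fiberwise along the Segal direction.

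\textbf{Step 2: adding the factorization systems.} The double $\infty$-category $\bbFact_{\infty}$ comes with a forgetful double functor $\bbFact_{\infty}\rightarrow\bbCat_{\infty}$. This functor is faithful in the $n$-uple sense: on mapping spaces of each type it is an inclusion of components. By the general lifting theory for faithful functors, a lift of a double functor $F\colon\iota(\C^{\dagger})^{v-\op}\rightarrow\bbCat_{\infty}$ through $\bbFact_{\infty}$ should be a property, not extra structure. Precisely, the fiber of
\begin{equation*}
\Map(\iota(\C^{\dagger})^{v-\op},\bbFact_{\infty})\rightarrow\Map(\iota(\C^{\dagger})^{v-\op},\bbCat_{\infty})
\end{equation*}
over $F$ should be the discrete set of choices of factorization systems on each value $F(c)$ such that all horizontal and vertical morphisms lie in $\bbFact_{\infty}$. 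Squares impose no further condition, because the faithful functor is full on $2$-cells, by the definition of $\bbFact_{\infty}$.

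Under Step 1, $F(c)=\E_c$. The horizontal morphisms are the cocartesian transports along egressive morphisms, and the vertical morphisms are the cartesian transports along ingressive morphisms. Hence the fiber is exactly the set described in Theorem~\ref{FactorizationSystemIntro}. That theorem identifies the same set with the fiber of $\Ambi(\C^{\dagger})^{\simeq}\rightarrow\{p\}$. Comparing the two fibrations over the equivalence of Step 1 yields the claim.

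\textbf{Step 3: naturality.} Both sides are functorial in $\C^{\dagger}$. On the left, functoriality is by pullback of ambifibrations. On the right, it is by precomposition. It remains to check that the equivalence of Step 1 and the fiberwise identification commute with these.

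\textbf{Main obstacle.} I expect the hardest part to be Step 1. That step must produce a genuinely coherent equivalence onto double functors, not just an identification of the horizontal and vertical restrictions. My first attempt would be to realize both sides as Segal objects. An ambi-type fibration over $\C$ corresponds to a compatible family of fibrations over the $\infty$-categories of grids $[n]\times[m]\rightarrow\C$ with horizontal edges egressive and vertical edges ingressive. I would then apply the usual straightening levelwise in the bisimplicial direction, and use Juran's description of $\bbCat_{\infty}$ as straightening of a bisimplicial fibration.
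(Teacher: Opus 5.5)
Your Step 2 matches the paper's proof. The forgetful functor $\bbFact\to\bbCat$ is locally fully faithful (Proposition \ref{ForgetDoubleFunctor}), so Theorem \ref{FaithfulLifting} identifies the fibers on the double-functor side with sets of transport-stable factorization systems. Theorem \ref{FactorizationSystemNaturality} then matches these with the fibers on the ambifibration side.

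The gap is in Step 1, which you call the main obstacle and propose to prove from scratch by levelwise straightening over grids. As written this is not an argument: recovering $\C$ as a localization of a nerve and straightening ``along the Segal direction'' are gestures, not steps. Carrying it out would amount to reproving Juran's theorem, and you do not need to. Theorem \ref{JuranUnStraightening} already gives $\Ortho(\C^{\dagger})^{\simeq}\simeq\Map_{\DCat}(\iota(\C^{\dagger})^{v-\op},\bbCat)$, onto \emph{all} double functors, with no condition on squares. What you are missing is the link between curved orthofibrations, which are objects of $\Fact_{/\C^{\dagger}}$, and the bare functors $p\colon\E\to\C$ of your Step 1. That link is Proposition \ref{ForgetOrtho}: the forgetful map $\Ortho(\C^{\dagger})^{\simeq}\to(\Cat_{/\C})^{\simeq}$ is a monomorphism. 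Its image is exactly the functors admitting cocartesian lifts of egressive morphisms and cartesian lifts of ingressive morphisms. Uniqueness holds because the ingressive morphisms of a curved orthofibration must be the cartesian lifts of ingressive morphisms, and this determines the factorization system. Existence is Proposition \ref{FactorizationSystem} applied with the trivial factorization systems $(\E_c)^{\sharp}$ on the fibers. With this, your Step 1 becomes a citation rather than an obstacle.

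There is a second, smaller gap in your conclusion that ``comparing the two fibrations over the equivalence of Step 1 yields the claim''. Both vertical maps are $0$-truncated, so a lift through $\bbFact$ is discrete structure, not a property as you first phrase it. Both maps are therefore covering maps over the same base. Pointwise bijections of fibers do not give an equivalence of total spaces unless the monodromy along paths in the base also matches; compare the trivial and the connected double cover of a circle. You need an actual map over the base. The paper builds $\Map_{\DCat}(\iota(\C^{\dagger})^{v-\op},\bbFact)\to\Ambi(\C^{\dagger})^{\simeq}$ over Juran's equivalence using Proposition \ref{CoveringSpaceTheory}. This applies because $\Ambi(\C^{\dagger})^{\simeq}\to\Ortho(\C^{\dagger})^{\simeq}$ is $0$-truncated. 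The pointwise lifts come from Proposition \ref{FactorizationSystem}, and they are compatible with paths because that construction is natural in equivalences. Only once this map exists does the fiberwise comparison show that it is an equivalence.
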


The straightening of an ambifibration $p\colon\E^{\dagger}\rightarrow\C^{\dagger}$ is, roughly, a double functor $\iota(\C^{\dagger})^{v-\op}\rightarrow\bbFact_{\infty}$ mapping $c\mapsto\E_c^{\dagger}$, the fiber of $p$ over $c$, a horizontal resp. vertical morphism to the cocartesian resp. cartesian transport along the corresponding egressive resp. ingressive morphism of $\C^{\dagger}$ and which acts on squares as
\begin{equation*}
\begin{tikzcd}
y\arrow[r,"g",twoheadrightarrow] & z & {} & {}\arrow[d,phantom,""'{name=R}] & \E_y^{\dagger}\arrow[d,"f^{\ast}"]\arrow[r,"g_{\#}"] & \E_z^{\dagger}\arrow[d,"f^{\prime,\ast}"]\\
x\arrow[u,"f",rightarrowtail]\arrow[r,"g^{\prime}",twoheadrightarrow] & y^{\prime}\arrow[u,"f^{\prime}",rightarrowtail] & {}\arrow[u,phantom,""'{name=L}]\arrow[from=L,to=R,mapsto] & {} & \E_x^{\dagger}\arrow[r,"g^{\prime}_{\#}"]\arrow[ur,Rightarrow] & \E_{y^{\prime}}^{\dagger}.
\end{tikzcd}
\end{equation*}
The component of the natural transformation in the right-hand oplax square at an object $x\in\E_y$ is given by the map $g^{\prime}_{\#}f^{\ast}x\rightarrow f^{\prime,\ast}g_{\#}x$ in $\E_{y^{\prime}}$ obtained by the universal properties of (co-)cartesian morphisms from the composite $f^{\ast}x\rightarrowtail x\twoheadrightarrow g_{\#}x$ of a cartesian and a cocartesian morphism living over $gf\simeq f^{\prime}g^{\prime}$.

The proof of Theorem \ref{UnStraighteningIntro} proceeds by \textit{lifting} the double $\infty$-categorical un/straightening equivalence of Juran \cite[Theorem B]{Juran} to $\infty$-categories equipped with a factorization system. To do so, we combine Theorem \ref{FactorizationSystemIntro} with the following general theorem on lifting through faithful functors of $n$-uple $\infty$-categories, which may be of independent interest.

\begin{introtheorem}[Theorem \ref{FaithfulLifting}]\label{FaithfulLiftingIntro}
Let $f\colon\C\rightarrow\D$ be a faithful functor of $n$-uple $\infty$-categories. Then, for any functor $p\colon\T\rightarrow\D$ of $n$-uple $\infty$-categories, the joint evaluation map
\begin{equation*}
\Map_{n\Cat^{\uple}_{/\D}}(\T,\C)\hookrightarrow\prod_{[t]\in\pi_0(\T^{\simeq})}(\C_{p(t)})^{\simeq}
\end{equation*}
is an inclusion of discrete spaces. The image consists precisely of those families $(c_t)_{[t]\in\pi_0(\T^{\simeq})}$ satisfying:
\begin{itemize}
\item For every $\underline{\varepsilon}\in\{0,1\}^n\setminus\{(0,\dotsc,0)\}$ and $\underline{\varepsilon}$-cell $\alpha\colon\bbD^{\underline{\varepsilon}}\rightarrow\T$, its image $f\alpha$ together with the chosen lifts of objects is contained in the full subspace $\Map_{n\Cat^{\uple}}(\bbD^{\underline{\varepsilon}},\C)\hookrightarrow\Map_{n\Cat^{\uple}}(\bbD^{\underline{\varepsilon}},\D)\times_{\D^{\simeq,\times2^n}}\C^{\simeq,\times2^n}$.
\end{itemize}
Furthermore, if $1\le k\le n$ and $f\colon\C\rightarrow\D$ is $k$-faithful, it suffices to check the above condition for those $\underline{\varepsilon}\in\{0,1\}^n\setminus\{(0,\dotsc,0)\}$ such that $\abs{\underline{\varepsilon}}=k$. If $\C$ and $\D$ are $(\infty,n)$-categories, it suffices to check the above condition for $\underline{\varepsilon}$ of the form $(1,\dotsc,1,0,\dotsc,0)$.
\end{introtheorem}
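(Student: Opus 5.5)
We model $n$-uple $\infty$-categories as functors $(\Delta^{\op})^{\times n}\rightarrow\An$ satisfying the Segal conditions in each variable, so they form a full subcategory of $\PSh(\Delta^{\times n})$. We also use the paper's definition of faithfulness: $f$ is faithful if for every $\underline{\varepsilon}$ the map
\begin{equation*}
\Map(\bbD^{\underline{\varepsilon}},\C)\rightarrow\Map(\bbD^{\underline{\varepsilon}},\D)\times_{\D^{\simeq,\times 2^n}}\C^{\simeq,\times 2^n}
\end{equation*}
is a monomorphism of spaces, i.e. an inclusion of components. Here $\bbD^{\underline{\varepsilon}}$ is the representable $\Delta^{\varepsilon_1}\times\dotsb\times\Delta^{\varepsilon_n}$, and $k$-faithfulness imposes this only for $\abs{\underline{\varepsilon}}\ge k$ (or $=k$).

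\textbf{Step 1: density.} Every $n$-uple $\infty$-category $\T$ is the colimit, in $\PSh(\Delta^{\times n})$, of its cells $\bbD^{\underline{m}}\rightarrow\T$. By the Segal conditions it is enough to use the cells $\bbD^{\underline{\varepsilon}}$ with $\underline{\varepsilon}\in\{0,1\}^n$, since a general $\bbD^{\underline{m}}$ is an iterated pushout of these along faces. Therefore
\begin{equation*}
\Map_{/\D}(\T,\C)\simeq\lim_{\alpha\colon\bbD^{\underline{\varepsilon}}\rightarrow\T}\Map_{/\D}(\bbD^{\underline{\varepsilon}},\C),
\end{equation*}
where the limit is over the category of elements restricted to these cells, with face maps.

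\textbf{Step 2: the evaluation map is a monomorphism.} For a single cell $\alpha$, the fibre of $\Map(\bbD^{\underline{\varepsilon}},\C)\rightarrow\Map(\bbD^{\underline{\varepsilon}},\D)$ over $p\alpha$ maps by a monomorphism into $\prod_{\text{vertices}}\C_{p(\alpha(v))}^{\simeq}$, by faithfulness. Fibres of $\C^{\simeq}\rightarrow\D^{\simeq}$ are spaces $\C_d^{\simeq}$; I expect these to be discrete, since faithfulness at $\underline{\varepsilon}=0$ should force $\C^{\simeq}\rightarrow\D^{\simeq}$ to be a monomorphism. This needs checking against the paper's definition. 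Monomorphisms are stable under limits, so the whole limit embeds into the limit of the vertex data. That limit is computed over the $0$-cells and their identifications through $1$-cells in the groupoid $\T^{\simeq}$, and it reduces to $\prod_{[t]\in\pi_0(\T^{\simeq})}\C_{p(t)}^{\simeq}$. One point needs care: a self-equivalence of $t$ acts trivially on the discrete set $\C_{p(t)}^{\simeq}$ once it lifts. This follows from the monomorphism property applied to invertible $1$-cells. Discreteness of the source follows because it embeds into a discrete target.

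\textbf{Step 3: the image.} A family $(c_t)$ lies in the image exactly when, for each cell $\alpha$, the point $(p\alpha,(c_{\alpha(v)}))$ lies in the component-subspace. Necessity is clear. For sufficiency, each cell then has a unique lift, and uniqueness makes these lifts automatically compatible with faces. They therefore assemble into a point of the limit.

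\textbf{Step 4: reductions.} If $f$ is $k$-faithful, the condition for $\abs{\underline{\varepsilon}}>k$ follows formally: a map $\bbD^{\underline{\varepsilon}}\rightarrow\D$ together with lifts of its $k$-dimensional faces lifts, because monomorphisms satisfy the relevant right cancellation. The condition for $\abs{\underline{\varepsilon}}<k$ is implied by higher cells via degeneracies: the degenerate $k$-cell on a lower cell lifts, and its face gives the lift. For $(\infty,n)$-categories, every cell of shape $\underline{\varepsilon}$ is equivalent, through degeneracies and the completeness/globularity conditions, to one of shape $(1,\dotsc,1,0,\dotsc,0)$. In each direction either all lower directions are trivial, or the cell is degenerate after reordering.

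\textbf{Main obstacle.} I expect Step 4 to be the hardest part, in particular making "higher cells are determined by boundary" precise, and the $(\infty,n)$-reduction to cells of shape $(1,\dotsc,1,0,\dotsc,0)$. I would handle this with the Segal decomposition plus an explicit pushout presentation of $\partial\bbD^{\underline{\varepsilon}}$.
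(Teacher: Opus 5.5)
Your outline is close to the paper's: embed into vertex data via faithfulness, pass to $\pi_0(\T^{\simeq})$ by a covering-space argument, test the image cell by cell, then use skeleta, degeneracies and globularity. However, two load-bearing steps rest on mechanisms that do not work.

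\textbf{Discreteness in Step 2.} Faithfulness at $\underline{\varepsilon}=(0,\dotsc,0)$ is vacuous. Moreover, $f^{\simeq}\colon\C^{\simeq}\rightarrow\D^{\simeq}$ is in general \emph{not} a monomorphism: for the faithful functor $\Fact\rightarrow\Cat$, the fibre over $[1]$ already contains both $[1]^{\sharp}$ and $[1]^{\flat}$. What is true is that $f^{\simeq}$ is $0$-truncated. Proving this needs completeness, which your model (Segal conditions only) drops. Completeness identifies paths in $\C^{\simeq}$ with invertible $1$-morphisms, so faithfulness on $(1,0,\dotsc,0)$-cells makes $f^{\simeq}$ a monomorphism on path spaces (Proposition \ref{FaithfulIsTruncatedOnCore}, Corollary \ref{FaithfulOnObjects}). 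Your argument never uses completeness, but it must. For a non-discrete anima $X$, the non-complete Segal anima $TX$ of Proposition \ref{SegAdjunction} maps fully faithfully to the point, yet $\Map_{/\ast}(\ast,TX)\simeq X$ is not discrete. Completeness is also what lets the $(1,0,\dotsc,0)$-condition lift paths in $\T^{\simeq}$, producing a map $\T^{\simeq}\rightarrow\C^{\simeq}$ over $\D^{\simeq}$. Note too that the limit of vertex data is $\Map_{\An_{/\D^{\simeq}}}(\T^{\simeq},\C^{\simeq})$, which only embeds into $\prod_{[t]}(\C_{p(t)})^{\simeq}$ (Proposition \ref{CoveringSpaceTheory}); it is not equal to it.

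\textbf{$k$-faithfulness in Step 4.} Step 4 misreads the definition. In the paper, $f$ is $k$-faithful if the boundary-relative maps $\Map(\bbD^{\underline{\varepsilon}},\C)\times_{\Map(\partial\bbD^{\underline{\varepsilon}},\C)}\{\alpha\}\rightarrow\Map(\bbD^{\underline{\varepsilon}},\D)\times_{\Map(\partial\bbD^{\underline{\varepsilon}},\D)}\{f\alpha\}$ are monomorphisms for $\abs{\underline{\varepsilon}}\le k$ and \emph{equivalences} for $\abs{\underline{\varepsilon}}>k$. Discarding the cells with $\abs{\underline{\varepsilon}}>k$ is an existence statement: a cell whose $k$-skeleton lifts must itself lift. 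This comes precisely from the equivalence part, by inducting up the skeletal filtration $\partial_{\le k}\bbD^{\underline{\varepsilon}}\hookrightarrow\dotsb\hookrightarrow\bbD^{\underline{\varepsilon}}$, whose steps are cobase changes of boundary inclusions of dimension $>k$. No cancellation property of monomorphisms can produce a lift. For example, take a faithful double functor that is bijective on objects and on horizontal and vertical morphisms but misses some square. With $\T=\D$ and $p=\id$, it satisfies every condition with $\abs{\underline{\varepsilon}}=1$, yet it admits no section. Your retract argument for $\abs{\underline{\varepsilon}}<k$ and the globularity reduction are fine.

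\textbf{A smaller point in Step 1.} The limit over only the $\{0,1\}^n$-cells computes maps out of the $n$-uple category freely generated by those cells, not out of $\T$. It agrees with $\Map_{/\D}(\T,\C)$ here only because $f$ is faithful. The paper avoids this by embedding into $\Map_{/\D}(\T,\D\times_{T\D_{0,\dotsc,0}}T\C_{0,\dotsc,0})$ and checking factorization through a monomorphism pointwise (Proposition \ref{FaithfulLiftingSegal}, Lemma \ref{MonosInFun}). There the Segal condition legitimately reduces the check to $\underline{\varepsilon}\in\{0,1\}^n$.
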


The second question we study concerns the forgetful functor $p\colon\Fact_{\infty}\rightarrow\Cat_{\infty}$ from the $\infty$-category of $\infty$-categories equipped with a factorization system to that of $\infty$-categories. For (ordinary) categories, it is a result of Korostenski-Tholen \cite[Theorem B]{KorostenskiTholen} that this functor is a monadic right adjoint; the associated monad is $\Ar(-)$ with the multiplication given by composition. We extend this result to the present setting.

\begin{introtheorem}[\protect{Theorem \ref{MonadicityTheorem}}]\label{MonadicityIntro}
The forgetful functor $p\colon\Fact_{\infty}\rightarrow\Cat_{\infty}$ of $\infty$-categories is a monadic right adjoint.
\end{introtheorem}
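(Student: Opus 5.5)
We prove Theorem \ref{MonadicityIntro} by first constructing an explicit left adjoint and then applying the Barr--Beck--Lurie monadicity theorem \cite[Theorem 4.7.3.5]{HA}. The expected left adjoint sends an $\infty$-category $\C$ to $\Ar(\C)=\Fun([1],\C)$ with the factorization system whose egressive morphisms are the squares with invertible target component and whose ingressive morphisms are the squares with invertible source component. A square $(a\to b)\to(a'\to b')$ factors uniquely as $(a\to b)\to(a'\to b)\to(a'\to b')$ by pulling back along the given map to $b$, and orthogonality reduces to checking that a mapping-space square in $\C$ is cartesian. The unit is the diagonal $\C\to\Ar(\C)$, $c\mapsto\id_c$.

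To prove the adjunction, given $\D^{\dagger}$ and a functor $F\colon\C\to\D$, we extend $F$ to the composite
\[
\Ar(\C)\xrightarrow{\Ar(F)}\Ar(\D)\to\D,
\]
where the second functor sends a morphism to the middle object of its egressive--ingressive factorization. This middle-object functor exists because factorizations are functorial: $\Ar(\D)\simeq\Fun([2],\D)_{\mathrm{fact}}$ via restriction, and the latter is a full subcategory. One checks it preserves egressives and ingressives, since the factorization of the egressive morphism $(a\to b)\to(a'\to b)$ is induced from the first leg, and similarly for ingressives. For the mapping-space equivalence
\[
\Map_{\Fact_\infty}(\Ar(\C)^{\dagger},\D^{\dagger})\simeq\Map_{\Cat_\infty}(\C,\D),
\]
we use that every object $a\to b$ of $\Ar(\C)$ receives the egressive map $(a\to a)\to(a\to b)$ and maps ingressively to $(b\to b)$. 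A factorization-preserving functor $G$ is therefore forced to send $a\to b$ to the middle object of the factorization of $G(\id_a)\to G(\id_b)$. Making this into an equivalence of spaces is cleanest as a right Kan extension or localization argument, which identifies factorization-preserving functors out of $\Ar(\C)$ with arbitrary functors out of $\C$.

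For monadicity, we check that $p$ is conservative and that $p$ preserves (in fact creates) colimits of $p$-split simplicial objects. Conservativity holds because a morphism in $\Fact_\infty$ is a functor together with a property, and the two classes of an equivalence of $\infty$-categories correspond under it, since the classes of a factorization system are closed under equivalences. For split simplicial objects, let $X_\bullet^{\dagger}$ be a simplicial object in $\Fact_\infty$ such that $pX_\bullet$ is split with colimit $\C$, which is then an absolute colimit. We must put a factorization system on $\C$ making it the colimit in $\Fact_\infty$. The natural candidate declares a morphism egressive (resp.\ ingressive) if it is the image of one under the augmentation $X_0\to\C$. Every morphism of $\C$ lifts to $X_0$ via the splitting section $s\colon\C\to X_0$, so factorizations exist. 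For orthogonality we use that $\C$ is a retract of $X_0$ via $s$ and the augmentation. A retract of a factorization system along functors preserving the classes inherits orthogonality, because mapping-space pullback squares in $\C$ are retracts of those in $X_0$, and retracts of pullback squares are pullback.

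The main obstacle is that the splitting lives only in $\Cat_\infty$, so the section $s$ need not preserve egressives and ingressives, and the retract argument above must be corrected. I would try first to use the extra degeneracy $X_1\to X_0$ and the simplicial identities to show that $s$ composed with the face maps preserves the classes up to the image in $\C$. Failing that, I would reduce to the essentially-algebraic description: $\Fact_\infty$ is the full subcategory of $\Fun([2],-)$-data, namely $\infty$-categories $\C$ with a section $\Ar(\C)\to\Fun([2],\C)$ that is fully faithful with appropriate image. This description is preserved by absolute colimits since $\Fun([n],-)$ preserves them, and it yields creation of $p$-split geometric realizations.
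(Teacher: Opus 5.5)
Your overall architecture (free object $\Ar(\C)^{\dagger}$, conservativity, Barr--Beck--Lurie) is the paper's. However, the step that carries all the content is not carried out: putting a factorization system on the colimit $\C_{-1}$ of a $p$-split simplicial object $\C_\bullet^{\dagger}$. You correctly note that the splitting $s$ is only a functor, so the retract argument fails. Your fallback (b) does not get around this. That $\Fun([n],-)$ preserves the split colimit only yields an induced section $\mathrm{fact}_{-1}\colon\Fun([1],\C_{-1})\to\Fun([2],\C_{-1})$ of $d_1^{\ast}$, whose image consists of $d$-images of egressive--ingressive pairs. What encodes orthogonality is full faithfulness of $\mathrm{fact}_{-1}$, and that property of a morphism does not pass to split colimits for free. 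Indeed, $\mathrm{fact}$ commutes with faces and degeneracies but not with the extra degeneracies, so the splitting does not exhibit $\mathrm{fact}_{-1}$ as a retract of $\mathrm{fact}_0$---this is your obstacle again. The paper handles this step via Proposition \ref{FullyFaithfulSectionSplitColimit}. Be aware, though, that its pointed-splitting step uses $F_0(sy)\simeq s(F_{-1}y)$, i.e.\ compatibility with the extra degeneracies (under which $F_{-1}$ is simply a retract of $F_0$). Without that compatibility, levelwise full faithfulness plus a retraction is not enough. Take a nontrivial discrete group $G$ and the split \v{C}ech nerve of $BG\sqcup\ast\to BG$ (identity on $BG$, basepoint on $\ast$, split by the inclusion of $BG$). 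Mapping the constant simplicial anima $\ast$ into it via the extra point gives a levelwise monomorphism with a retraction, yet on colimits it induces $\ast\to BG$, which is not fully faithful.

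The essentially-algebraic idea in (b) does work if you use \emph{objectwise} conditions instead of full faithfulness. Functors and transformations natural in morphisms of $\Fact$ induce maps on the split colimits that commute with $d$. Since $d$ (hence $\Ar(d)$, $\Ar(\Ar(d))$) is essentially surjective, any condition of the form ``this functorially assigned morphism is invertible'' descends from level $0$ to level $-1$. So use the ingressive-part functor $L\colon\Ar(\C_n)\to\Ar(\C_n)$, with unit $\eta\colon\id\Rightarrow L$ given by the squares whose source component is the egressive part and whose target component is the identity. The conditions to impose are that the following are invertible: the target components of $\eta$, the transformations $L\eta$ and $\eta L$, and $L$ applied to the source components of $\eta$. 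At level $-1$, \cite[Proposition 5.2.7.4]{HTT} makes $L_{-1}$ a localization. For $e\colon a\to b$ with $L_{-1}e$ invertible, one has $L_{-1}e\simeq\id_b$. Precomposition with $\eta_e$ is then exactly the orthogonality map $\Map_{\C_{-1}}(b,x)\simeq\Map_{\Ar(\C_{-1})}(\id_b,m)\to\Map_{\Ar(\C_{-1})}(e,m)$ for $m$ in the image of $L_{-1}$. Every $f$ factors as the source component of $\eta_f$ followed by $L_{-1}f$. Both classes are the essential images under $d$ of those of $\C_0$, so the colimit property in $\Fact$ follows as in the paper. Separately, your factorization system on $\Ar(\C)$ is stated backwards. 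There is no map $a'\to b$ in general (nor pullbacks in $\C$); the factorization is $(a\to b)\to(a\to b')\to(a'\to b')$. So egressives have invertible \emph{source} component (Proposition \ref{FreeForgetful}), which is what your adjunction paragraph actually uses.
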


\subsection*{Outline}

In Section \ref{FactSysDoubleCat}, we briefly recall some aspects of factorization systems \& double categories. In Section \ref{SegalSection}, we recall the general theory of Segal objects and prove the lifting criterion Theorem \ref{FaithfulLiftingIntro}. In Section \ref{AmbifibSection}, we carry out the technical construction of ambifibrations, proving Theorem \ref{FactorizationSystemIntro}. These inputs will be combined in Section \ref{UnStraighteningSection} to prove the un/straightening equivalence Theorem \ref{UnStraighteningIntro} as well as the variant Theorem \ref{UnStraighteningEg}. Independently of all the preceding sections, we prove Theorem \ref{MonadicityIntro} on monadicity in Section \ref{MonadicitySection}.

\subsection*{Conventions}

\begin{itemize}
\item The prefix \singlequote{$\infty$-} will be dropped henceforth, i.e. \singlequote{category} resp. \singlequote{$2$-category} resp. \singlequote{double category} always refers to $\infty$-category resp. $(\infty,2)$-category resp. double $\infty$-category.
\item The ($\infty$-)category of spaces/($\infty$-)groupoids will be referred to as the category of \textit{anima}, denoted $\An$.
\item The (large) ($\infty$-)category of (small) ($\infty$-)categories is denoted $\Cat$.
\item If $\C$ is an ($\infty$-)category and $f\colon x\rightarrow y$ a morphism in $\C$, then post- resp. pre-composition with $f$ will be denoted $f_{\circ}\colon\Map_{\C}(t,x)\rightarrow\Map_{\C}(t,y)$ resp. $f^{\circ}\colon\Map_{\C}(y,t)\rightarrow\Map_{\C}(x,t)$ for any $t\in\C$.
\item If $p\colon\E\rightarrow\C$ is a cocartesian resp. cartesian fibration of ($\infty$-categories) and $f\colon x\rightarrow y$ a morphism in $\C$, then we denote by $f_{\#}\colon\E_x\rightarrow\E_y$ resp. $f^{\ast}\colon\E_y\rightarrow\E_x$ the cocartesian resp. cartesian transport.
\item ($\infty$-)categories are either named or generically denoted by caligraphic letters $\C$, $\D$, etc.
\item ($\infty$-)categories that are equipped with a factorization system are denoted by a superscript $(-)^{\dagger}$.
\item ($\infty$-)$2$-categories resp. double ($\infty$-)categories are denoted in boldface resp. blackboard boldface.
\end{itemize}

\subsection*{Acknowledgments}
I would like to thank Georg Lehner as well as Marcus Nicolas for helpful conversations and João Lobo Fernandes for explaining his Example \ref{FernandesExample}, which inspired the construction in Section \ref{AmbifibSection}. Furthermore, I would like to thank my advisor Thomas Nikolaus for his support.

The author was funded by the Deutsche Forschungsgemeinschaft (DFG, German Research Foundation) – Project-ID 427320536 – SFB 1442, as well as under Germany’s Excellence Strategy EXC2044/2–390685587, Mathematics Münster: Dynamics–Geometry–Structure.

\section{Factorization Systems and Double Categories}\label{FactSysDoubleCat}

Let $\C$ be a category. Recall that a \textit{factorization system} on $\C$ consists of two equivalence-stable classes of morphisms in $\C$—abusing notation due to Barwick, we call these \textit{egressive} and \textit{ingressive} morphisms, denoted by $\twoheadrightarrow$ and $\rightarrowtail$, respectively—such that every morphism in $\C$ factors as an egressive morphism followed by an ingressive morphism and the two classes are \textit{orthogonal} in the sense that, for any egressive morphism $f\colon a\twoheadrightarrow b$ and ingressive morphism $g\colon x\rightarrowtail y$, the square of mapping anima
\begin{equation}\label{Orthogonality}
\begin{tikzcd}
\Map_{\C}(b,x)\arrow[r,"g_{\circ}"]\arrow[d,"f^{\circ}"]\pb & \Map_{\C}(b,y)\arrow[d,"f^{\circ}"]\\
\Map_{\C}(a,x)\arrow[r,"g_{\circ}"] & \Map_{\C}(a,y)
\end{tikzcd}
\end{equation}
is cartesian. The wide subcategories of $\C$ spanned by the egressive resp. ingressive morphisms are denoted $\C_{\eg}$ resp. $\C_{\ing}$. If $f\colon a\rightarrow b$ is any morphism such that the square $(\ref{Orthogonality})$ is cartesian for all ingressive morphisms $g\colon x\rightarrowtail y$, then $f$ is egressive and vice versa, i.e. $\C_{\eg}$ and $\C_{\ing}$ determine another \cite[Proposition 5.2.8.11]{HTT}.

Furthermore, recall that the datum of a factorization system is equivalent to a fully faifhful section of the composition map $d_1\colon\Fun([2],\C)\rightarrow\Fun([1],\C)$ whose essential image consists of the diagrams of the form $x\twoheadrightarrow y\rightarrowtail z$ \cite[Proposition 5.2.8.17]{HTT}. In other words, the section $\mathrm{fact}\colon\Fun([1],\C)\hookrightarrow\Fun([2],\C)$ provides functorial egressive-ingressive factorizations. For general references on factorization systems, we refer the reader to \cite[Section 3.1]{ABFJ}, \cite[Appendix B]{Soergel}, \cite[Section 10.3]{Haugseng}, \cite{BarkanSteinebrunner}.

\begin{definition}
Let $\Fact$ (resp. $\Fact^{\eg}$, $\Fact^{\ing}$) denote the full subcategory of $\Fun(\Lambda_2^2,\Cat)$ (resp. $\Fun([1],\Cat)$) on those diagrams $\C_{\eg}\hookrightarrow\C\hookleftarrow\C_{\ing}$ of two wide monomorphisms classifying the subcategories of egressive and ingressive morphisms in a factorization system on $\C$ (resp. those diagrams $\C_{\eg}\hookrightarrow\C$, $\C_{\ing}\hookrightarrow\C$ of a wide monomorphism classifying the subcategory of egressive, ingressive morphisms in a factorization system on $\C$). There are canonical forgetful functors $p\colon\Fact\rightarrow\Cat$ and $p^{\eg}\colon\Fact^{\eg}\rightarrow\Cat$ resp. $p^{\ing}\colon\Fact^{\ing}\rightarrow\Cat$.
\end{definition}
\begin{remark}
The involution $(-)^{\op}\colon\Cat\iso\Cat$ induces inverse equivalences $(-)^{\op}\colon\Fact^{\eg}\stackrel{\sim}{\longleftrightarrow}\Fact^{\ing}\colon(-)^{\op}$.
\end{remark}

\begin{proposition}\label{ForgetfulFunctors}
The forgetful functor $p\colon\Fact\rightarrow\Cat$ is conservative and faithful. For a category $\C$, the fiber $p^{-1}(\C)$ is discrete and equivalent to the set of factorization systems on $\C$. The forgetful functors $p^{\eg}\colon\Fact^{\eg}\rightarrow\Cat$ resp. $p^{\ing}\colon\Fact^{\ing}\rightarrow\Cat$ are faithful and, for a category $\C$, the fiber $(p^{\eg})^{-1}(\C)$ resp. $(p^{\ing})^{-1}(\C)$ is the poset of factorization systems on $\C$,  ordered by inclusion of egressive resp. ingressive morphisms, which are opposite.
\end{proposition}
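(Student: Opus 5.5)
Everything reduces to basic facts about monomorphisms of categories. For a wide subcategory $\C_0\hookrightarrow\C$, the inclusion is a monomorphism in $\Cat$: it induces an inclusion of components on each mapping anima and an equivalence on cores. The key input is the following description of mapping anima in $\Fun(\Lambda_2^2,\Cat)$ when both legs of source and target are monomorphisms. For diagrams $\C_{\eg}\hookrightarrow\C\hookleftarrow\C_{\ing}$ and $\D_{\eg}\hookrightarrow\D\hookleftarrow\D_{\ing}$, the mapping anima is computed by an end. I would express it as the iterated pullback
\[
\Map(\C_{\eg},\D_{\eg})\times_{\Map(\C_{\eg},\D)}\Map(\C,\D)\times_{\Map(\C_{\ing},\D)}\Map(\C_{\ing},\D_{\ing}).
\]
Since $\D_{\eg}\hookrightarrow\D$ is a monomorphism, so is postcomposition $\Map(\C_{\eg},\D_{\eg})\to\Map(\C_{\eg},\D)$. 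Pullbacks of monomorphisms are monomorphisms, so the projection to $\Map(\C,\D)$ is a monomorphism of anima. Its image consists of the functors $F\colon\C\to\D$ whose restriction to $\C_{\eg}$ factors through $\D_{\eg}$, and whose restriction to $\C_{\ing}$ factors through $\D_{\ing}$; that is, the functors preserving egressive and ingressive morphisms. This proves faithfulness of $p$. The same argument, with $[1]$ in place of $\Lambda_2^2$, gives faithfulness of $p^{\eg}$ and $p^{\ing}$.

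For the fibers, I would first check that the fiber $p^{-1}(\C)$, taken in the $\infty$-categorical sense over the object $\C$, has as its objects the pairs of wide subcategories of $\C$ forming a factorization system. Its morphisms are identity-preserving inclusions, which exist iff the classes are contained in each other. Faithfulness makes the fiber a poset. The fiber is the pullback $\Fact\times_{\Cat}\{\C\}$, and its mapping anima are fibers of the monomorphisms above over $\id_{\C}$, hence $(-1)$-truncated. Wide subcategories of $\C$ correspond to subsets of $\pi_0$ of morphism anima closed under composition and containing the equivalences, so equivalence classes of objects are exactly factorization systems.

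For $p^{\eg}$, the order is inclusion of egressive classes. For $p$, a morphism requires $\C_{\eg}\subseteq\C'_{\eg}$ and $\C_{\ing}\subseteq\C'_{\ing}$. Since the egressive class determines the ingressive class as its right orthogonal complement, and vice versa (\cite[Proposition 5.2.8.11]{HTT}), an inclusion of egressive classes forces the reverse inclusion of ingressive classes. Together, $\C_{\eg}\subseteq\C'_{\eg}$ and $\C_{\ing}\subseteq\C'_{\ing}$ then force $\C'_{\ing}\subseteq\C_{\ing}$, so the ingressive classes agree, and hence so do the egressive classes. Thus the poset is discrete, i.e. a set. The same complement correspondence shows that the $\eg$- and $\ing$-posets are opposite; alternatively this follows from the remark on $(-)^{\op}$.

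For conservativity, suppose $F$ is a morphism in $\Fact$ whose underlying functor is an equivalence $\C\simeq\D$. Its inverse $G$ preserves both classes. Indeed, a morphism of $\D$ is egressive iff it satisfies the orthogonality square $(\ref{Orthogonality})$ against all ingressives, and $F$ transports this condition, since $F$ is an equivalence mapping the ingressives of $\C$ into those of $\D$. I expect this to be the main subtle point: an equivalence preserving the classes need not reflect them a priori. The fix is to use that $F$ induces a map between the posets of factorization systems which must be an equality by discreteness. Concretely, $F$ transports the factorization system on $\C$ to one on $\D$ that is contained in the given one in both classes, hence equal to it by the argument above. So $G$ lies in $\Fact$, and $F$ is an equivalence.
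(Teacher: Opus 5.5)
Your proposal is correct and takes the same route as the paper: $p$ is faithful because factoring through the monomorphisms $\D_{\eg}\hookrightarrow\D\hookleftarrow\D_{\ing}$ is a condition (you make this explicit via the pullback formula for mapping anima in $\Fun(\Lambda_2^2,\Cat)$), the two posets are opposite by mutual orthogonality, and the two inclusions forcing equality give both discreteness of the fibers of $p$ and conservativity. Your conservativity paragraph spells out what the paper leaves implicit, and your first, direct orthogonality argument already suffices on its own, without the detour through the transported factorization system: $G(d)$ is tested against the ingressives of $\C$, whose images under $F$ are ingressive in $\D$.
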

\begin{proof}[Proof.]
The functors are faithful since factoring through a monomorphism is a condition, so the fibers are posets in any case. In particular, the descriptions of the fibers of $p^{\eg}$ resp. $p^{\ing}$ are automatic and these partial orders are opposite because $\C_{\eg}\subseteq\C_{\eg}^{\prime}$ is equivalent to $\C_{\ing}^{\prime}\subseteq\C_{\ing}$ by mutual orthogonality. Finally, these inequalities together imply equality, hence the fibers of $p$ are discrete and $p$ is conservative.
\end{proof}

\begin{proposition}\label{EgIsoFull}
The restriction functors $\Fact\rightarrow\Fact^{\eg}$ resp. $\Fact\rightarrow\Fact^{\ing}$ are wide monomorphisms. In particular, there are canonical equivalences $(\Fact^{\eg})^{\simeq}\simeq\Fact^{\simeq}\simeq(\Fact^{\ing})^{\simeq}$.
\end{proposition}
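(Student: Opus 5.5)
We treat $\Fact\rightarrow\Fact^{\eg}$; the ingressive case then follows by applying the involution $(-)^{\op}$, which exchanges egressive and ingressive morphisms and identifies $\Fact^{\eg}$ with $\Fact^{\ing}$.

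The restriction $r\colon\Fact\rightarrow\Fact^{\eg}$ comes from restricting along the inclusion $[1]\hookrightarrow\Lambda_2^2$ picking out the leg $\C_{\eg}\hookrightarrow\C$. A functor between categories is a wide monomorphism exactly when it is faithful, i.e. monic on mapping anima, and essentially surjective. So there are two things to show.

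\emph{Essential surjectivity.} An object of $\Fact^{\eg}$ is a pair $\C_{\eg}\hookrightarrow\C$ for which there exists a factorization system on $\C$ with egressive class $\C_{\eg}$. By \cite[Proposition 5.2.8.11]{HTT}, that class determines the ingressive class. Hence the object is the image of the diagram $\C_{\eg}\hookrightarrow\C\hookleftarrow\C_{\ing}$.

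\emph{Faithfulness.} Let $X=(\C_{\eg}\hookrightarrow\C\hookleftarrow\C_{\ing})$ and $Y=(\D_{\eg}\hookrightarrow\D\hookleftarrow\D_{\ing})$ be objects of $\Fact$. Since $\Fact\subseteq\Fun(\Lambda_2^2,\Cat)$ is full, $\Map_{\Fact}(X,Y)$ is the mapping anima of diagrams. This mapping anima is an iterated pullback:
\begin{equation*}
\Map(X,Y)\simeq\Map(\C,\D)\times_{\Map(\C_{\eg},\D)}\Map(\C_{\eg},\D_{\eg})\times_{\Map(\C_{\ing},\D)}\Map(\C_{\ing},\D_{\ing}).
\end{equation*}
Because $\D_{\ing}\hookrightarrow\D$ is a monomorphism in $\Cat$, the map $\Map(\C_{\ing},\D_{\ing})\rightarrow\Map(\C_{\ing},\D)$ is a monomorphism of anima. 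Monomorphisms are stable under pullback, so the projection forgetting the ingressive component is a monomorphism. That projection is exactly
\begin{equation*}
\Map_{\Fact}(X,Y)\rightarrow\Map(\C,\D)\times_{\Map(\C_{\eg},\D)}\Map(\C_{\eg},\D_{\eg})=\Map_{\Fact^{\eg}}(rX,rY).
\end{equation*}
So $r$ is faithful. Concretely, its image consists of those functors preserving egressive morphisms that also preserve ingressive ones.

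\emph{Equivalence of cores.} To compare $\Fact^{\simeq}$ with $(\Fact^{\eg})^{\simeq}$, it remains to check that every equivalence in $\Fact^{\eg}$ lies in this image. An equivalence $F$ in $\Fact^{\eg}$ is an equivalence $\C\simeq\D$ carrying $\C_{\eg}$ onto $\D_{\eg}$, with its inverse also preserving egressives. Orthogonality is characterized purely by cartesianness of the squares $(\ref{Orthogonality})$ of mapping anima, and these are preserved under equivalences of categories. By the characterization in \cite[Proposition 5.2.8.11]{HTT}, $F$ therefore carries the ingressive class, i.e. the morphisms right orthogonal to $\C_{\eg}$, onto the morphisms right orthogonal to $\D_{\eg}$, which is $\D_{\ing}$.

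Thus $F$ and its inverse lie in $\Fact$, and $r$ induces an equivalence on cores. The only point needing care is this last transport of orthogonality along equivalences, and it is formal.
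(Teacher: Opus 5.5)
Your argument uses the same three ingredients as the paper's proof: faithfulness, essential surjectivity, and iso-fullness via mutual orthogonality. Your explicit pullback computation of mapping anima is an unpacking of the paper's one-line argument: $r$ is faithful by left cancellation, since $p=p^{\eg}\circ r$ is faithful.

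One claim is false as stated, though. A functor is \emph{not} a wide monomorphism just because it is faithful and essentially surjective. For example, $\ast\sqcup\ast\rightarrow\ast$ is both faithful and essentially surjective but is not a monomorphism in $\Cat$. A monomorphism must also induce a monomorphism on cores (cf.\ Proposition \ref{FaithfulVsMono}). Concretely, it must be iso-full: every equivalence in $\Fact^{\eg}$ between objects in the image must come from an equivalence in $\Fact$.

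That is exactly what your last paragraph proves. There, $F$ and its inverse preserve ingressives by orthogonality. The homotopies $FG\simeq\id$ and $GF\simeq\id$ lift automatically, because $r$ is an inclusion of components on mapping anima. So the proof is complete once you reorganize it. Your ``equivalence of cores'' step is a necessary input to the wide-monomorphism claim, not only to the ``in particular''. This is precisely how the paper orders it: essential surjectivity and iso-fullness together give the wide monomorphism.
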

\begin{proof}[Proof.]
These functors are also faithful by left cancellation, so it remains to observe they are essentially surjective and iso-full. The former is immediate and the latter follows since an equivalence preserving egressive morphisms automatically preserves ingressive morphisms (resp. vice versa) by mutual orthogonality.
\end{proof}

\begin{proposition}[\protect{\cite[Proposition 5.2]{ChuHaugseng}}]\label{CatFactLimits}
The forgetful functor $p\colon\Fact\rightarrow\Cat$ creates small limits.
\end{proposition}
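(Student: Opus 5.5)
Given a small diagram $\C^{\dagger}_{\bullet}\colon I\rightarrow\Fact$, write $\C=\lim_i\C_i$ in $\Cat$. The plan is to show that $\C$ carries a unique factorization system making the projections $\C\rightarrow\C_i$ lie in $\Fact$, and that this object is a limit in $\Fact$. Since $\Fact\subseteq\Fun(\Lambda_2^2,\Cat)$ is a full subcategory and limits in the functor category are computed pointwise, the natural candidate is the diagram
\begin{equation*}
\lim_i\C_{i,\eg}\longrightarrow\lim_i\C_i\longleftarrow\lim_i\C_{i,\ing}.
\end{equation*}
Limits of monomorphisms are monomorphisms. Wideness follows because $(-)^{\simeq}$ commutes with limits and each $\C_{i,\eg}^{\simeq}\simeq\C_i^{\simeq}$. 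So it suffices to show that this diagram lies in $\Fact$. Once that is done, it is automatically the limit in the full subcategory $\Fact$. Creation then follows because $p$ is conservative and faithful with discrete fibers (Proposition \ref{ForgetfulFunctors}): any factorization system on $\lim\C_i$ making the projections morphisms of $\Fact$ has egressives contained in those of the candidate, and by the opposite-order argument for ingressives it must coincide with the candidate.

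Concretely, a morphism of $\C$ is declared egressive (resp. ingressive) if each of its components is. These classes are equivalence-stable subcategories, namely the subcategories $\lim_i\C_{i,\eg}$ and $\lim_i\C_{i,\ing}$. For orthogonality, let $f\colon a\rightarrow b$ be egressive and $g\colon x\rightarrow y$ ingressive in $\C$. Mapping anima in a limit are limits of mapping anima, $\Map_{\C}(b,x)\simeq\lim_i\Map_{\C_i}(b_i,x_i)$, naturally in pre- and postcomposition. Hence the square (\ref{Orthogonality}) for $(f,g)$ is the limit over $I$ of the corresponding squares in the $\C_i$. Each of those is cartesian, and limits commute with pullbacks, so the square is cartesian.

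The main obstacle is existence of factorizations, since the factorization data must be chosen coherently over $I$. For this I use the characterization \cite[Proposition 5.2.8.17]{HTT} recalled above. Each morphism $\C_i\rightarrow\C_j$ in $\Fact$ preserves egressives and ingressives, so it commutes with the fully faithful sections $\mathrm{fact}_i\colon\Fun([1],\C_i)\hookrightarrow\Fun([2],\C_i)$. The reason is that the full subcategories $\Fun^{\mathrm{fact}}([2],\C_i)$ of egressive-followed-by-ingressive diagrams are preserved, and $d_1$ restricted to them is an equivalence. Thus the equivalences $d_1\colon\Fun^{\mathrm{fact}}([2],\C_i)\iso\Fun([1],\C_i)$ assemble into a natural transformation of $I$-diagrams. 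Passing to the limit and using $\Fun(K,\lim_i\C_i)\simeq\lim_i\Fun(K,\C_i)$, we get an equivalence
\begin{equation*}
\lim_i\Fun^{\mathrm{fact}}([2],\C_i)\iso\Fun([1],\C).
\end{equation*}
Its source is the full subcategory of $\Fun([2],\C)$ on diagrams that are componentwise egressive followed by ingressive. Inverting this equivalence yields a fully faithful section of $d_1$ with the required essential image, and in particular every morphism of $\C$ factors. Together with orthogonality, this shows the candidate lies in $\Fact$, which completes the argument.
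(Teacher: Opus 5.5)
Your proposal is correct and takes the same approach as the paper. You compute the limit in $\Fun(\Lambda_2^2,\Cat)$, get wide monomorphisms because monomorphisms and $(-)^{\simeq}$ are stable under limits, obtain orthogonality as a limit of the cartesian orthogonality squares, and get coherent factorizations from their uniqueness up to contractible choice. Your limit of the equivalences $d_1\colon\Fun^{\mathrm{fact}}([2],\C_i)\iso\Fun([1],\C_i)$ and your explicit uniqueness argument for the lifted factorization system simply spell out what the paper's proof states briefly or leaves implicit.
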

\begin{proof}[Proof.]
It suffices to show that the full subcategory $\Fact\subseteq\Fun(\Lambda_2^2,\Cat)$ is stable under limits. The limit in $\Fun(\Lambda_2^2,\Cat)$ of a diagram $(\C_{\bullet})_{\eg}\hookrightarrow\C_{\bullet}\hookleftarrow(\C_{\bullet})_{\ing}$ in $\Fact$ consists again of two wide monomorphisms $\C_{\eg}\hookrightarrow\C\hookleftarrow\C_{\ing}$, because monomorphisms are stable under limits and $(-)^{\simeq}$ preserves limits. These two wide subcategories are again orthogonal by taking the limit of the cartesian squares $(\ref{Orthogonality})$ as cartesian squares are stable under limits and mapping anima in limits are limits of mapping anima. Finally, because egressive-ingressive factorizations are unique up to contractible choice, they are coherently compatible with the diagram and induce egressive-ingressive factorizations in $\C$, hence $(\C_{\eg}\hookrightarrow\C\hookleftarrow\C_{\ing})\in\Fact$.
\end{proof}
\begin{remark}\label{RmkFilteredColimit}
The same analysis also applies to filtered colimits in $\Fact$.
\end{remark}

Additionally, we take note of the adjunctions
\begin{equation*}
\begin{tikzcd}[column sep=large]
\Cat\arrow[r,"(-)^{\flat}"{description},shift left=1em]\arrow[r,"(-)^{\sharp}"{description},shift right=1em] & \Fact^{\eg},\arrow[l,"p^{\eg}"{description}]\arrow[l,shift left=2em,"(-)_{\eg}"]\arrow[l,shift right=2em,"(-)\sqb{\eg^{-1}}"'] & \Cat\arrow[r,"(-)^{\sharp}"{description},shift left=1em]\arrow[r,"(-)^{\flat}"{description},shift right=1em] & \Fact^{\ing}\arrow[l,"p^{\ing}"{description}]\arrow[l,shift left=2em,"(-)_{\ing}"]\arrow[l,shift right=2em,"(-)\sqb{\ing^{-1}}"'].
\end{tikzcd}
\end{equation*}
These are obtained by restrcting similar adjunctions between $\Cat$ and the category of marked categories. Explicitly, for a category $\C$, the factorization systems $\C^{\flat}$ and $\C^{\sharp}$ are those in which all morphisms are ingressive resp. egressive (the other class then necessarily consists of precisely the equivalences, by orthogonality) and we call these the \textit{trivial} factorization systems. Further restricting these, we also obtain adjunctions
\begin{equation*}
\begin{tikzcd}[column sep=large]
\Cat\arrow[r,"(-)^{\flat}"{description}] & \Fact,\arrow[l,shift right=1em,"(-)\sqb{\eg^{-1}}"']\arrow[l,shift left=1em,"(-)_{\ing}"] & \Cat\arrow[r,"(-)^{\sharp}"{description}] & \Fact.\arrow[l,shift right=1em,"(-)\sqb{\ing^{-1}}"']\arrow[l,shift left=1em,"(-)_{\eg}"]
\end{tikzcd}
\end{equation*}


\begin{example}
The category $\Fact$ is presentable, but the category $\Fact^{\eg}$ is neither complete nor cocomplete. To see this, let $f\colon\C\rightarrow\D$ be a functor of such that the preimage $f^{-1}(\D^{\simeq})\hookrightarrow\C$ is not the class of egressive morphisms in any factorization system on $\C$, e.g. take $[1]^2\rightarrow[2]$ classifying
\begin{equation*}
\begin{tikzcd}
0\arrow[r,equals]\arrow[d] & 0\arrow[d]\\
1\arrow[r] & 2.
\end{tikzcd}
\end{equation*}
The above adjunctions imply that limits in $\Fact^{\eg}$ are computed underlying on both the category itself and the wide subcategory of egressive morphisms, so the pullback of the cospan $\C^{\sharp}\rightarrow\D^{\sharp}\leftarrow\D^{\flat}$ would have to be $(\C,f^{-1}(\D^{\simeq}))$, yet this is not an object in $\Fact^{\eg}$, so the pullback does not exist.

The adjunctions also imply that colimits in $\Fact^{\eg}$ are computed underlying, so the pushout of the span $f^{-1}(\D^{\simeq})^{\sharp}\leftarrow f^{-1}(\D^{\simeq})^{\flat}\rightarrow\C^{\flat}$ would have to be of the form $(\C,\C_{\eg})$. There are functors $f^{-1}(\D^{\simeq})^{\sharp}\rightarrow(\C,\C_{\eg})$ by definition and $(\C,\C_{\eg})\rightarrow\C[f^{-1}(\D^{\simeq})^{-1}]^{\flat}$ by the universal property, forcing $f^{-1}(\D^{\simeq})=\C_{\eg}$. Thus the pushout does not exist either.
\end{example}

Next, we summarize the relevant background on $2$- and double categories that we will need later on, as well as Juran's results from \cite{Juran}. The reader is referred to the references for more complete introductions.

\begin{definition}
A \textit{double category} is a $2$-uple complete Segal anima (cf. Section \ref{SegalSection}). A \textit{$2$-category} is a double category $X_{\bullet,\bullet}\colon\Delta^{\times2,\op}\rightarrow\An$ such that the simplicial object $X_{0,\bullet}\colon\Delta^{\op}\rightarrow\An$ is constant. The full subcategories of $\Fun(\Delta^{\times2,\op},\An)$ on the $2$- resp. double categories are denoted $2\Cat\subseteq\DCat$.
\end{definition}

\begin{notation}
The involutions induced by $\op\times\id_{\Delta},\id_{\Delta}\times\op\colon\Delta\times\Delta\rightarrow\Delta\times\Delta$ on $\Fun(\Delta^{\times2,\op},\An)$ restrict to $\DCat$ and are denoted $(-)^{h-\op},(-)^{v-\op}\colon\DCat\rightarrow\DCat$ respectively. The involution induced by the swap $\tau\colon\Delta\times\Delta\rightarrow\Delta\times\Delta,\,[n,m]\mapsto[m,n]$ similarly restricts to an involution $(-)^t\colon\DCat\rightarrow\DCat$.

Furthermore, the inclusions $(-)_h\colon2\Cat\hookrightarrow\DCat$ resp. $(-)_v\coloneqq(-)^t\circ(-)_h\colon2\Cat\hookrightarrow\DCat$ admit right adjoints $\Hor(-)\colon\DCat\rightarrow2\Cat$ resp. $\Ver(-)\colon\DCat\rightarrow2\Cat$ called the \textit{horizontal} resp. \textit{vertical fragment}.
\end{notation}

\begin{definition}\label{LocallyFullyFaithfulDefn}
A functor of $2$-categories is called \textit{fully faithful} resp. \textit{locally fully faithful} resp. \textit{faithful} if it induces on Hom-categories an equivalence resp. fully faithful functor resp. monomorphism. It is \textit{wide} resp. \textit{locally wide} if it is essentially surjective resp. essentially surjective on $\Hom$-categories.
\end{definition}

\begin{recollection}
The category $2\Cat$ of $2$-categories has a canonical monoidal structure, the (oplax) \textit{Gray tensor product} $\boxtimes\colon2\Cat\times2\Cat\rightarrow2\Cat,\,(\bm{\C},\bm{\D})\mapsto\bm{\C}\boxtimes\bm{\D}$; see \cite{LoubatonRuit} for a review and further references. This monoidal structure has the following properties:
\begin{itemize}
\item It is compatible with colimits in either variable.
\item It restricts on the subcategory $\square$ of \textit{Gray cubes} \cite[Definition 1.5]{CampionMaehara} to the strict Gray tensor product, which admits a combinatorially explicit description \cite[Definition 1.4]{CampionMaehara}.
\item It is biclosed. More precisely, there are adjunctions
\begin{equation*}
		(-)\boxtimes\bm{\D}\dashv\bfFun^{\oplax}(\bm{\D},-),\qquad\bm{\C}\boxtimes(-)\dashv\bfFun^{\lax}(\bm{\C},-),
\end{equation*}
where $\bfFun^{(\op)\lax}$ is the $2$-category of functors, (op)lax natural transformations and modifications.
\end{itemize}
\end{recollection}
\begin{remark}
The first two conditions uniquely determine the Gray tensor product \cite[Corollary 3.5, (2)]{CampionMaehara}.
\end{remark}
\begin{remark}
In particular, the first two conditions imply that the unit of this monoidal structure is the terminal $2$-category and so the unique maps $\bm{\C}\rightarrow\ast$ and $\bm{\D}\rightarrow\ast$ induce a natural map $\bm{\C}\boxtimes\bm{\D}\rightarrow\bm{\C}\times\bm{\D}$. This map is an equivalence if $\bm{\C}$ or $\bm{\D}$ is terminal, so it is an equivalence if $\bm{\C}$ or $\bm{\D}$ is an anima by compatibility with colimits.
\end{remark}

Later on, we will need a rudimentary description of the objects and morphisms in the Gray tensor product.

\begin{proposition}\label{GrayRudimentary}
Let $\bm{\C},\bm{\D}$ be $2$-categories. Then:
\begin{enumerate}[label=(\arabic*)]
\item The natural map $\pi_0((\bm{\C}\boxtimes\bm{\D})^{\simeq})\rightarrow\pi_0((\bm{\C}\times\bm{\D})^{\simeq})\cong\pi_0(\bm{\C}^{\simeq})\times\pi_0(\bm{\D}^{\simeq})$ is a bijection. A pair $(c,d)\in\pi_0(\bm{\C}^{\simeq})\times\pi_0(\bm{\D}^{\simeq})$ corresponds to the object $c\boxtimes d$ classified by the functor $\ast\simeq\ast\boxtimes\ast\stackrel{c\boxtimes d}{\rightarrow}\bm{\C}\boxtimes\bm{\D}$.
\item The $1$-morphisms of $\bm{\C}\boxtimes\bm{\D}$ are generated under composition by those of the form $f\boxtimes d\colon c\boxtimes d\rightarrow c^{\prime}\boxtimes d$ for $f\colon c\rightarrow c^{\prime}$ a morphism in $\bm{\C}$ and $d\in\bm{\D}$, classified by the functor $[1]\simeq[1]\boxtimes\ast\stackrel{f\boxtimes d}{\rightarrow}\bm{\C}\boxtimes\bm{\D}$, and, dually, those of the form $c\boxtimes g\colon c\boxtimes d\rightarrow c\boxtimes d^{\prime}$ for $c\in\bm{\C}$ and $g\colon d\rightarrow d^{\prime}$ a morphism in $\bm{\D}$.
\end{enumerate}
\end{proposition}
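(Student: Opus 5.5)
The plan is to use the two properties of the Gray tensor product recalled above: compatibility with colimits in each variable, and agreement with the strict Gray tensor product on Gray cubes. Both statements concern only low-dimensional data, namely objects and the generation of $1$-morphisms under composition. The idea is to reduce to the case where both factors are built from cells $\ast$, $[1]$ and the walking $2$-cell, and then check the claim directly.

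First, write $\bm{\C}\simeq\colim_{\square_{/\bm{\C}}}Q$ and $\bm{\D}\simeq\colim_{\square_{/\bm{\D}}}Q'$ as colimits of Gray cubes. It suffices to use the cells $\ast$, $[1]$ and $C_2$ (the walking $2$-cell), since these generate $2\Cat$ under colimits. Compatibility with colimits in each variable then gives
\begin{equation*}
\bm{\C}\boxtimes\bm{\D}\simeq\colim_{(Q\to\bm{\C}),(Q'\to\bm{\D})}Q\boxtimes Q'.
\end{equation*}
The same computation holds for $\times$ after replacing $\boxtimes$ by the product, because the natural comparison map $\boxtimes\to\times$ is natural in the cubes.

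For (1), I will use that $\pi_0((-)^{\simeq})$ of a $2$-category is $\pi_0$ of the space of objects $X_{0,0}$. This is a left adjoint on $2\Cat$: it is left adjoint to the inclusion of sets as discrete $2$-categories. It therefore preserves colimits, so $\pi_0((\bm{\C}\boxtimes\bm{\D})^{\simeq})$ is the colimit of the sets $\pi_0((Q\boxtimes Q')^{\simeq})$. For Gray cubes the strict description shows that the objects of $Q\boxtimes Q'$ are exactly the pairs of objects. The colimit is then $\pi_0(\bm{\C}^{\simeq})\times\pi_0(\bm{\D}^{\simeq})$, using that colimits of sets commute with products in each variable separately and that $\pi_0((-)^{\simeq})$ preserves colimits on each side. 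The identification with $c\boxtimes d$ follows by naturality, applied to the cube pair $(\ast,\ast)$.

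For (2), let $\bm{S}\subseteq\bm{\C}\boxtimes\bm{\D}$ be the locally full sub-$2$-category containing all objects and the $1$-morphisms generated under composition by the $f\boxtimes d$ and $c\boxtimes g$. I need to show that $\bm{S}$ contains everything. A map into $\bm{\C}\boxtimes\bm{\D}$ from $Q\boxtimes Q'$ sends $1$-morphisms into $\bm{S}$. This holds because in the strict Gray tensor product of cubes every $1$-morphism is a composite of $1$-morphisms of the form $(\text{morphism})\otimes(\text{object})$ and $(\text{object})\otimes(\text{morphism})$, and these map to $f\boxtimes d$ and $c\boxtimes g$ by naturality. Consequently each leg of the colimit factors through $\bm{S}$, since sub-$2$-categories that are locally full are detected on $1$-morphisms. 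The colimit therefore factors through $\bm{S}$, so the inclusion $\bm{S}\hookrightarrow\bm{\C}\boxtimes\bm{\D}$ has a section and is an equivalence.

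The main obstacle is justifying this last step. It needs the fact that $1$-morphisms of a colimit of $2$-categories are generated under composition by the images of $1$-morphisms of the pieces. Equivalently, it needs that a locally full wide sub-$2$-category through which all legs factor is everything. My first attempt is to argue via the universal property: locally full wide subcategory inclusions are monomorphisms, and a monomorphism through which a colimit cocone factors must be an equivalence.
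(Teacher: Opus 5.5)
Your treatment of (2) works. The universal-property argument you propose for the ``main obstacle'' is a cleaner substitute for the paper's appeal to Segalification and completion. Take $\bm{S}$ to be a union of path components of the space of $1$-morphisms that contains the identities; by completeness it then contains all equivalences and is closed under invertible $2$-cells, so $\bm{S}\hookrightarrow\bm{\C}\boxtimes\bm{\D}$ is a monomorphism in $2\Cat$. Every leg $Q\boxtimes Q'\to\bm{\C}\boxtimes\bm{\D}$ factors through $\bm{S}$ by the strict description on cubes and naturality, and a monomorphism with a section is an equivalence. (The aside about $\ast$, $[1]$, $C_2$ should be dropped: generating $2\Cat$ under colimits is weaker than density, and $C_2$ is not a Gray cube. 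The density of $\square$ is what you use.)

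The gap is in (1). The functor $X\mapsto\pi_0(X^{\simeq})$ is not left adjoint to the inclusion of sets into $2\Cat$. That left adjoint is the set of connected components $\pi_0\lvert X\rvert$: it sends $[1]$ to a point, whereas $\pi_0([1]^{\simeq})$ has two elements. Moreover, $\pi_0((-)^{\simeq})$ does not preserve colimits in $2\Cat$, because the completion step in forming colimits can create new equivalences and so identify objects. For example, the walking isomorphism is obtained from $[3]$ by collapsing the edges $0\to2$ and $1\to3$, yet the corresponding pushout of sets of equivalence classes of objects has two elements rather than one. So your colimit computation does not establish injectivity in (1), i.e. that $c\boxtimes d\simeq c'\boxtimes d'$ forces $c\simeq c'$ and $d\simeq d'$. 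Even surjectivity needs a different justification than colimit preservation.

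You already have the tools to repair this, along the lines of the paper. For surjectivity, apply your monomorphism-with-section argument to the full sub-$2$-category on the objects of the form $c\boxtimes d$. For injectivity, use the natural comparison map $\bm{\C}\boxtimes\bm{\D}\to\bm{\C}\times\bm{\D}$. The functor $\bm{\C}^{\simeq}\times\bm{\D}^{\simeq}\simeq\bm{\C}^{\simeq}\boxtimes\bm{\D}^{\simeq}\to\bm{\C}\boxtimes\bm{\D}$ factors through $(\bm{\C}\boxtimes\bm{\D})^{\simeq}$ and is a section of $(\bm{\C}\boxtimes\bm{\D})^{\simeq}\to\bm{\C}^{\simeq}\times\bm{\D}^{\simeq}$. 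Since this section is also surjective on $\pi_0$, the two maps induce inverse bijections on $\pi_0$.
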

\begin{proof}[Proof.]
The full subcategory inclusion $\square\subseteq2\Cat$ is dense \cite[Corollary 2.11]{CampionMaehara}, hence there is a natural equivalence $\bm{\E}\simeq\colim_{\square^d\rightarrow\bm{\E}}\square^d$ for any $2$-category $\bm{\E}$ (more precisely, the colimit is indexed by the relative slice $\square\times_{2\Cat}2\Cat_{/\bm{\E}}$). In particular, we obtain
\begin{equation*}
\bm{\C}\boxtimes\bm{\D}\simeq\colim_{\square^d\rightarrow\bm{\C},\square^{d^{\prime}}\rightarrow\bm{\D}}\square^d\boxtimes\square^{d^{\prime}}.
\end{equation*}
Then, using a general description of colimits in $2\Cat$ (e.g. using Segalification \cite[Section 2.2]{Segalification} and completion \cite[Sections 1.2-1.3]{Lurie}), we see that $(2)$ reduces to the case $\bm{\C}=\square^d$ and $\bm{\D}=\square^{d^{\prime}}$ for $d,d^{\prime}\ge0$, which is true by the combinatorially explicit description of \cite[Definitions 1.4-1.5]{CampionMaehara}. The same reduction also demonstrates that every object of $\bm{\C}\boxtimes\bm{\D}$ is of the form $c\boxtimes d$ for some $c\in\bm{\C}$ and $d\in\bm{\D}$.

To prove $(1)$, observe that the maps $\bm{\C}^{\simeq}\hookrightarrow\bm{\C}$ and $\bm{\D}^{\simeq}\hookrightarrow\bm{\D}$ induce a map $\bm{\C}^{\simeq}\times\bm{\D}^{\simeq}\simeq\bm{\C}^{\simeq}\boxtimes\bm{\D}^{\simeq}\rightarrow\bm{\C}\boxtimes\bm{\D}$, which factors as $\bm{\C}^{\simeq}\times\bm{\D}^{\simeq}\rightarrow(\bm{\C}\boxtimes\bm{\D})^{\simeq}$ and is a section of the natural map $(\bm{\C}\boxtimes\bm{\D})^{\simeq}\rightarrow\bm{\C}^{\simeq}\times\bm{\D}^{\simeq}$. The first map is essentially surjective by the preceding discussion, hence these induce inverse bijections on $\pi_0$.
\end{proof}

Furthermore, we recall the following theorem, although we will only need the construction of the functor.

\begin{theorem}
The bicosimplicial object $\Delta\times\Delta\rightarrow2\Cat,\,[n,m]\mapsto[m]\boxtimes[n]$ induces a fully faithful functor
\begin{equation*}
\Sq\colon2\Cat\hookrightarrow\DCat,\,\C\mapsto([n,m]\mapsto\Map_{2\Cat}([m]\boxtimes[n],\C)).
\end{equation*}
This is called the (oplax) \textnormal{squares embedding}.\qed
\end{theorem}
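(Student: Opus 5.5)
The plan is to identify $\Sq$ with a restricted Yoneda functor and reduce full faithfulness to a density statement. Write $Q\colon\Delta\times\Delta\rightarrow2\Cat$ for the bicosimplicial object $[n,m]\mapsto[m]\boxtimes[n]$. Then $\Sq$ is the restricted Yoneda functor $\C\mapsto\Map_{2\Cat}(Q(-),\C)$, and it lands in $\DCat$. Indeed, $\Map([m]\boxtimes[n],\C)$ satisfies the Segal conditions in each variable, because $(-)\boxtimes[n]$ and $[m]\boxtimes(-)$ preserve colimits, hence the spine decompositions $[m]\simeq[1]\sqcup_{[0]}\dotsb\sqcup_{[0]}[1]$. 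Completeness follows from the identification $[m]\boxtimes[0]\simeq[m]$ (the unit remark), which gives $\Sq(\C)_{0,\bullet}\simeq\C^{\le1}$ up to the swap convention, together with its transpose. By the standard criterion, $\Sq$ is fully faithful if and only if the full subcategory $\mathcal{Q}\subseteq2\Cat$ on the objects $[m]\boxtimes[n]$ is dense. Here one has to be careful that the mapping anima between the $[m]\boxtimes[n]$ are exactly what the bisimplicial indexing sees. This is true since $\Fun(\Delta^{\times2,\op},\An)$ only remembers maps in the image of $Q$, so one should first check that $Q$ is fully faithful onto $\mathcal{Q}$, or otherwise replace $\Delta\times\Delta$ by $\mathcal{Q}$ and observe that the Segal and complete objects agree.

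Next I would use the following transfer lemma. If $\mathcal{A}\subseteq\mathcal{B}\subseteq2\Cat$ are full subcategories with $\mathcal{A}$ dense, and every object of $\mathcal{B}$ is a retract in $2\Cat$ of a colimit of objects of $\mathcal{A}$ that is preserved by $\Map(-,\C)$, then $\mathcal{B}$ is dense. I would apply this in reverse, taking a known dense subcategory $\mathcal{A}$ and showing that each of its objects is obtained from $\mathcal{Q}$ in a way that $\Map(-,\C)$ sees. A convenient choice is the $\Theta_2$-cells, or the Gray cubes via \cite[Corollary 2.11]{CampionMaehara}. By Segal decomposition these reduce to the generators $[0]$, $[1]$ and the globes $[1]([k])$. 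Now $[0]\simeq[0]\boxtimes[0]$ and $[1]\simeq[1]\boxtimes[0]$ already lie in $\mathcal{Q}$, so only the globes remain.

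The main obstacle is exhibiting $[1]([k])$ as a retract of the Gray product $[1]\boxtimes[k]$, or of the pushout collapsing $\{0\}\boxtimes[k]$ and $\{1\}\boxtimes[k]$ to points. This pushout lives in the colimit-closure of $\mathcal{Q}$, in a way compatible with the Segal conditions on $\Sq(\C)$. For the retraction $[1]\boxtimes[k]\rightarrow[1]([k])$ I would use the collapse map. For the section I would send $0\mapsto 0\boxtimes0$ and $1\mapsto 1\boxtimes k$, and send the $i$-th parallel $1$-morphism to the zig-zag path that runs vertically to $0\boxtimes i$, then horizontally, then vertically up to $1\boxtimes k$. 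The oplax squares then provide the $2$-cells between consecutive paths. Checking that this is a functor and that the composite with the collapse is the identity should be done using the explicit combinatorial description of the strict Gray product \cite[Definition 1.4]{CampionMaehara}, since all the objects involved are strict, gaunt $2$-categories. Once that is done, $\Map([1]([k]),\C)$ is a natural retract of data computed from $\Sq(\C)$. Density of $\mathcal{Q}$ then follows, and hence $\Sq$ is fully faithful.
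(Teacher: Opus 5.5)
The paper does not prove this statement. It is recalled from the literature (hence the bare \qed), and only the construction of $\Sq$ is used later. Judged on its own, your proposal has a genuine gap at the first reduction.

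Since $\Sq$ is restricted Yoneda along $Q\colon\Delta\times\Delta\rightarrow2\Cat$, it factors as $2\Cat\rightarrow\PSh(\mathcal{Q})\stackrel{Q^{\ast}}{\rightarrow}\PSh(\Delta\times\Delta)$. Density of the full subcategory $\mathcal{Q}$ only makes the first functor fully faithful. You flag this, but your proposed remedy fails, because $Q$ is not fully faithful onto $\mathcal{Q}$:
\begin{itemize}
\item It is not injective on objects, since $[m]\boxtimes[0]\simeq[m]\simeq[0]\boxtimes[m]$.
\item It is not full. $\Map_{2\Cat}([1],[1]\boxtimes[1])$ contains the two composite diagonals $0\boxtimes0\rightarrow1\boxtimes1$, which come from no map in $\Delta\times\Delta$.
\end{itemize}
Your alternative, replacing $\Delta\times\Delta$ by $\mathcal{Q}$ and observing that the Segal and complete objects agree, is not a side check. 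It is the whole content of the theorem. You would need
$\Map_{\PSh(\Delta\times\Delta)}(\Sq([m]\boxtimes[n]),\Sq(\D))\simeq\Map_{2\Cat}([m]\boxtimes[n],\D)$.
That is, a map of bisimplicial anima out of $\Sq([m]\boxtimes[n])$ must be determined by the generic cell, the representable at $[n,m]$. But $\Sq([m]\boxtimes[n])$ has many more cells; for instance the diagonals occur both as horizontal and as vertical morphisms.

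Your retract step fails for the same reason. The section $[1]([k])\rightarrow[1]\boxtimes[k]$ uses exactly the zig-zag composites, and neither it nor the collapse lies in the image of $Q$. So the resulting retraction of $\Map([1]([k]),\C)$ off $\Sq(\C)_{k,1}$ is natural for functors of $2$-categories. It is not natural for arbitrary double functors $\Sq(\C)\rightarrow\Sq(\D)$, which is what full faithfulness is about.

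What is missing is an argument that double functors between squares double categories respect structure invisible to $\Delta\times\Delta$. The standard input is companions. In $\Sq(\D)$ every vertical morphism has a companion horizontal morphism, namely the same $1$-morphism of $\D$, bound by degenerate oplax squares. Companions are unique up to contractible choice and are preserved by every double functor. This forces a double functor $\Sq(\C)\rightarrow\Sq(\D)$ to be compatible with the diagonals. Combined with the identification of the horizontal fragment of $\Sq(\C)$ with $\C$, it is then determined by that fragment.

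Three smaller points:
\begin{itemize}
\item Completeness of $\Sq(\C)_{0,\bullet}$ and $\Sq(\C)_{\bullet,0}$ does not imply completeness of a double Segal anima in general; every row and column must be complete. Here it does follow, because the Gray adjunctions identify $\Sq(\C)_{\bullet,m}$ and $\Sq(\C)_{n,\bullet}$ with Rezk nerves of the underlying categories of $\bfFun^{\lax}([m],\C)$ and $\bfFun^{\oplax}([n],\C)$.
\item Your transfer lemma is vacuous as stated, since $\Map_{2\Cat}(-,\C)$ sends every colimit to a limit. What matters is preservation by restricted Yoneda into $\PSh(\mathcal{Q})$. You have not shown this for the Segal decompositions of $\Theta_2$-objects.
\item That last point is repairable. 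For example, $[n]([k_1],\dotsc,[k_n])$ is a retract of $[n]\boxtimes[k_1+\dotsb+k_n]$ via staircase paths, so $\mathcal{Q}$ is indeed dense. But density of $\mathcal{Q}$ alone does not give the theorem.
\end{itemize}
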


\begin{notation}
Let $\bfCat$ denote the $2$-category of categories, functors and natural transformations and $\bbCat$ the double category $\Sq(\bfCat)$ whose objects are categories, horizontal resp. vertical morphisms are functors and squares are oplax commutative squares in $\bfCat$, i.e.
\begin{equation*}
\begin{tikzcd}
\C\arrow[r]\arrow[d] & \C^{\prime}\arrow[d]\\
\D\arrow[r]\arrow[ur,Rightarrow] & \D^{\prime}.
\end{tikzcd}
\end{equation*}
\end{notation}

Finally, we are able to recall Juran's double-categorical framework and un/straightening equivalence.

\begin{theorem}[\protect{\cite[Theorem 3.19]{Juran}}]
The bicosimplicial object $\Delta^{\times2}\rightarrow\Fact,\,[n,m]\mapsto[n]^{\sharp}\times[m]^{\flat}$ (cf. \cite[Notation 2.14]{Juran}) induces a fully faithful functor
\begin{equation*}
\iota\colon\Fact\hookrightarrow\DCat,\,\C^{\dagger}\mapsto([n,m]\mapsto\Map_{\Fact}([n]^{\sharp}\times[m]^{\flat},\C^{\dagger})).\tag*{\qedsymbol}
\end{equation*}
\end{theorem}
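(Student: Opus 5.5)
The plan is to view $\iota$ as the restricted Yoneda (``nerve'') functor associated to the bicosimplicial object $[n,m]\mapsto[n]^{\sharp}\times[m]^{\flat}$. Such a functor is fully faithful exactly when this family is dense in $\Fact$. Concretely, for every $\C^{\dagger}$ the canonical map
\begin{equation*}
\colim_{[n]^{\sharp}\times[m]^{\flat}\rightarrow\C^{\dagger}}[n]^{\sharp}\times[m]^{\flat}\longrightarrow\C^{\dagger}
\end{equation*}
must be an equivalence in $\Fact$, where the colimit is indexed over the category of elements of $\iota(\C^{\dagger})$. There are two tasks: first check that $\iota$ actually lands in $\DCat$, then prove this density statement.

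\textbf{Landing in $\DCat$.} Since $p$ is faithful (Proposition \ref{ForgetfulFunctors}), $\Map_{\Fact}([n]^{\sharp}\times[m]^{\flat},\C^{\dagger})$ is the union of components of $\Map_{\Cat}([n]\times[m],\C)$ on those grids whose horizontal edges are egressive and whose vertical edges are ingressive. Functors out of a product $[n]\times[m]$ satisfy the Segal conditions in each variable. The edge conditions are local, i.e.\ checked edge by edge, so they are compatible with the Segal decompositions. Hence $\iota(\C^{\dagger})$ is a $2$-uple Segal anima. Completeness in each direction reduces to completeness of $\C$, because the equivalences in both $\C_{\eg}$ and $\C_{\ing}$ are exactly the equivalences of $\C$.

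\textbf{Density.} Again using faithfulness of $p$, it suffices to compute the underlying category of the colimit and to identify which morphisms become egressive resp.\ ingressive. I would compare the colimit with $\C$ itself. Every morphism of $\C$ is, via the functorial factorization section $\mathrm{fact}\colon\Fun([1],\C)\hookrightarrow\Fun([2],\C)$, the diagonal of a cell $[1]^{\sharp}\times[1]^{\flat}\rightarrow\C^{\dagger}$ that is degenerate in the appropriate corner. So the colimit map is essentially surjective on morphisms.

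The substance is that composition in $\C$ is recovered from the cells. Given a composite written ``the wrong way'', an ingressive map followed by an egressive map, we refactor it egressive-then-ingressive. This yields exactly a commutative square in $\iota(\C^{\dagger})_{1,1}$, and the orthogonality square $(\ref{Orthogonality})$ makes this refactorization unique up to contractible choice. Using this, I would show that mapping anima in the colimit agree with those of $\C$. The cleanest route is via the equivalence $\Fun([1],\C)\simeq\Fun^{\eg,\ing}([2],\C)$ from \cite[Proposition 5.2.8.17]{HTT}: it identifies $k$-simplices of $\C$ with suitable grids in $\iota(\C^{\dagger})$, compatibly with faces, and exhibits the Segal space of $\C$ as a retract (indeed a colimit) of the grid data. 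Once the underlying category is $\C$, the egressive and ingressive classes are forced. The image of the $\sharp$-direction edges is $\C_{\eg}$, and by orthogonality this determines $\C_{\ing}$.

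\textbf{Main obstacle.} I expect the hardest step to be the density statement, specifically computing the colimit in $\Fact$ coherently, since colimits in $\Fact$ are not simply computed underlying. To get around this, I would first try to construct an explicit left adjoint $L\colon\DCat\rightarrow\Fact$ sending $X$ to the localization of its ``total'' category with the evident marking. I would then verify directly that the counit $L\iota(\C^{\dagger})\rightarrow\C^{\dagger}$ is an equivalence, using the factorization section to build an inverse on underlying categories.
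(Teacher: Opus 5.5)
The paper gives no proof of this statement. It is imported from Juran (Theorem 3.19 there), so there is no in-paper argument to compare with. On its own terms, your reduction of full faithfulness to density of the objects $[n]^\sharp\times[m]^\flat$ in $\Fact$ is correct, and so is your check that $\iota(\C^\dagger)$ is a double category. The gap is the density step itself, which is the whole content of the theorem: it is only announced (\emph{I would show that mapping anima in the colimit agree with those of $\C$}), and neither route you sketch delivers it.

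Computing the underlying category of the colimit in $\Fact$ is not a simplification, because $p\colon\Fact\to\Cat$ does not preserve colimits. The pushout of $[1]^\flat\leftarrow[0]\to[1]^\sharp$ (an ingressive arrow followed by an egressive one) is $[1]^\sharp\times[1]^\flat$ in $\Fact$, with four objects, but $[2]$ in $\Cat$. The same example rules out your fallback. The left adjoint of $\iota$ sends the double category consisting of a vertical arrow followed by a horizontal one to $[1]^\sharp\times[1]^\flat$. Its total category with the evident marking is $[2]$, which is not an object of $\Fact$ at all, since the composite has no egressive--ingressive factorization. So the left adjoint must adjoin factorizations freely, and evaluating it on $\iota(\C^\dagger)$ is just the original problem again.

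The reduction that does work runs the other way. Show that the cocone $[n]\times[m]\to\C$ indexed by the cells of $\iota(\C^\dagger)$ is a colimit in $\Cat$, i.e.\ that $\Map_{\Cat}(\C,\D)\simeq\Map(\iota(\C^\dagger),N^{\square}\D)$, where $N^{\square}\D_{n,m}=\Map_{\Cat}([n]\times[m],\D)$. Egressive resp.\ ingressive morphisms are exactly the images of $(1,0)$- resp.\ $(0,1)$-cells, and $p$ is faithful, so this lifts to a colimit in $\Fact$ as in the last step of the proof of Theorem~\ref{MonadicityTheorem}. That statement says the refactorization squares assemble \emph{coherently} into the composition of $\C$.

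Your appeal to \cite[Proposition 5.2.8.17]{HTT} gives only half of it. Extend that result to the staircases $T_k=\{(i,j)\mid 0\le i\le j\le k\}$, egressive in the $j$-direction and ingressive in the $i$-direction. This identifies $N(\C)_k$ with maps from the corresponding double category $\mathbb{T}_k$ into $\iota(\C^\dagger)$, naturally in $[k]$ and $\C^\dagger$, and so yields a left inverse of $\iota$ on mapping anima.

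The missing half is that a double functor out of $\iota(\C^\dagger)$ is recovered from its induced functor on underlying categories. That is exactly your unproved parenthetical \emph{indeed a colimit}, and its literal form $\iota(\C^\dagger)\simeq\colim_{[k]\to\C}\mathbb{T}_k$ is false. It would turn every functor $\C\to\D$, class-preserving or not, into a double functor $\iota(\C^\dagger)\to\iota(\D^\dagger)$. Already for $\C^\dagger=[1]^\sharp$ the comparison is $\mathbb{T}_1\to\iota([1]^\sharp)$, which goes from three objects to two.

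A genuine coherence argument is needed at this point, and the proposal does not supply one. Possible sources are the fact that every grid in $\iota(\C^\dagger)$ is determined by its ingressive-then-egressive boundary path, combined with a cofinality argument, or Kositsyn's $\Delta_{\Fact}$ as mentioned in Section~\ref{MonadicitySection}.
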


\begin{theorem}[\protect{\cite[Proposition 4.5, Theorem 4.6]{Juran}}]\label{JuranUnStraightening}
Let $\C^{\dagger}$ be a category equipped with a factorization system. Then there is a natural equivalence
\begin{equation*}
\Ortho(\C^{\dagger})^{\simeq}\simeq\Map_{\DCat}(\iota(\C^{\dagger})^{v-\op},\bbCat)
\end{equation*}
between the anima of \textnormal{curved orthofibrations} over $\C^{\dagger}$ (see Definition \ref{AmbiFibDefn}) and the mapping anima of double functors $\iota(\C^{\dagger})^{v-\op}\rightarrow\bbCat$.\qed
\end{theorem}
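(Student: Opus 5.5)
The plan is to reduce both sides to the case where the base is one of the generating objects $[n]^{\sharp}\times[m]^{\flat}$, and to compare them there. Both sides are contravariant in $\C^{\dagger}$. The right-hand side sends colimits in $\Fact$, preserved by $\iota$, to limits of anima. For the left-hand side, I would show that $\Ortho(-)^{\simeq}$ converts the canonical colimit presentation $\C^{\dagger}\simeq\colim_{[n]^{\sharp}\times[m]^{\flat}\rightarrow\C^{\dagger}}[n]^{\sharp}\times[m]^{\flat}$ into a limit. Since $\iota$ is fully faithful and defined by the bicosimplicial object $[n,m]\mapsto[n]^{\sharp}\times[m]^{\flat}$, this presentation is the density statement underlying $\iota$.

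For the left-hand side I would argue in two steps. Pullback of curved orthofibrations along functors in $\Fact$ is well defined. A curved orthofibration is determined by its underlying functor $p\colon\E\rightarrow\C$, together with the conditions that $p$ has cocartesian lifts of egressive morphisms and cartesian lifts of ingressive morphisms. Descent for the underlying functors then follows from descent for all functors into $\Cat$ along the colimit, which holds because $\Cat_{/(-)}$ sends colimits to limits after straightening over the left fibration/twisted arrow presentation; in practice I would check it on generating squares. The existence of the required (co)cartesian lifts is local, since every egressive or ingressive morphism of $\C$ is hit by some $[1]^{\sharp}$ or $[1]^{\flat}$. I expect this descent step to be the main technical obstacle, because general colimits in $\Cat$ are not computed levelwise. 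I would first try to replace $\Ortho$ by the equivalent datum of the pair of fibrations $\E_{\eg}\rightarrow\C_{\eg}$ (cocartesian) and $\E_{\ing}\rightarrow\C_{\ing}$ (cartesian), glued along $\E$, and exploit that cocartesian and cartesian fibrations individually satisfy descent by ordinary straightening.

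For the base case, a double functor $\iota([n]^{\sharp}\times[m]^{\flat})^{v-\op}\rightarrow\bbCat=\Sq(\bfCat)$ is, via the definition of $\Sq$ through $[m]\boxtimes[n]$, the same as a functor of $2$-categories $[m]^{\op}\boxtimes[n]\rightarrow\bfCat$ with the appropriate (op)lax variance. Such a functor amounts to a grid of categories, with covariant functors in the egressive direction, contravariant functors in the ingressive direction, and non-invertible natural transformations filling the squares. I would identify such Gray-lax functors with curved orthofibrations over $[n]^{\sharp}\times[m]^{\flat}$ via lax/Gray unstraightening: straighten the cartesian part in the $[m]$-direction first, then the cocartesian part in the $[n]$-direction. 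The filling $2$-cells are the comparison maps $g'_{\#}f^{\ast}x\rightarrow f'^{\ast}g_{\#}x$ induced by the (co)cartesian universal properties. These cells are not required to be invertible, which is precisely the word \emph{curved}. Naturality in $[n],[m]$ gives an equivalence of bicosimplicial anima. Passing to limits then yields the claimed equivalence for arbitrary $\C^{\dagger}$.
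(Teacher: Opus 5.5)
The paper gives no proof of this statement. It is imported from \cite{Juran} and used as a black box, namely the bottom row of the square in the proof of Theorem \ref{UnStraightening}, so your proposal has to stand on its own.

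\textbf{What works.} The architecture is reasonable: reduce to grids $[n]^{\sharp}\times[m]^{\flat}$ by density, compare there, then pass to limits. The right-hand side is fine, though not because $\iota$ preserves colimits, which it does not in general. Rather, $\Map_{\DCat}(\iota(\C^{\dagger})^{v-\op},\bbCat)$ is a mapping anima of presheaves on $\Delta^{\times2}$. It is therefore the limit of the $\bbCat_{n,m}$ over the category of elements of $\iota(\C^{\dagger})$, i.e. over grids $[n]^{\sharp}\times[m]^{\flat}\to\C^{\dagger}$. The base case is also plausible. Over a grid, $\E\to[m]$ is a cartesian fibration whose fibers are cocartesian over $[n]$, with arbitrary functors over $[n]$ as transports. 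So you can apply the lax straightening \cite[Theorem E]{HHNL} and curry through the Gray tensor product. Naturality in $\Delta^{\times2}$ and the matching of variances with $\Sq$ are only asserted, but that is bookkeeping.

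\textbf{The gap.} The missing piece is the descent statement for $\Ortho(-)^{\simeq}$. That is where all the content lies, and the justification you offer for it is false.
\begin{itemize}
\item $\Cat_{/(-)}$ does not turn colimits into limits, because colimits in $\Cat$ are not universal. For $[2]\simeq[1]\sqcup_{[0]}[1]$, the functors $[1]\to[2]$ ($0\mapsto0$, $1\mapsto2$) and $\{0\}\sqcup\{2\}\hookrightarrow[2]$ have the same restrictions to both edges.
\item You also cannot argue by universality of colimits along $p$, since curved orthofibrations are not exponentiable in general. A comparison cell $g'_{\#}f^{\ast}\Rightarrow f'^{\ast}g_{\#}$ can prevent a morphism over the diagonal of a square from factoring through the corner $(0,1)$.
\item Most seriously, neither side converts general colimits in $\Fact$ into limits. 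Gluing target to source, $[1]^{\flat}\sqcup_{[0]}[1]^{\sharp}\simeq[1]^{\sharp}\times[1]^{\flat}$ in $\Fact$, because every pair $x\rightarrowtail y\twoheadrightarrow z$ extends uniquely to a square. Neither $\Fact\to\Cat$ nor $\iota$ preserves this pushout. A curved orthofibration over the square is not determined by the span $\E_{(0,0)}\leftarrow\E_{(0,1)}\to\E_{(1,1)}$ of its edge restrictions: the fourth fiber and the comparison $2$-cell are lost.
\end{itemize}
So descent cannot follow from any formal colimit-to-limit property. It must exploit the specific shape of the canonical diagram, namely that it contains every square $[1]^{\sharp}\times[1]^{\flat}\to\C^{\dagger}$, and nothing in your argument does.

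Your fallback, gluing $\E_{\eg}\to\C_{\eg}$ and $\E_{\ing}\to\C_{\ing}$ ``along $\E$'', only renames the problem. That pair does not determine the curved orthofibration. The missing datum is exactly how ingressive-then-egressive composites in $\E$ refactor, i.e. the comparison cells, and gluing these coherently is what the theorem asserts.

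\textbf{A possible repair.} You could descend in the presheaf topos $\PSh(\Delta^{\times2})$ instead. Since $\iota$ is fully faithful and preserves pullbacks, the restriction map is at least a monomorphism. You would then need to check locally that the glued presheaf lies in the essential image of $\iota$, and that locally (co)cartesian lifts are globally (co)cartesian. That requires an intrinsic, locally checkable description of the image of $\iota$, which is the key input missing from your proposal.
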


\section{Segal Objects and Faithfulness}\label{SegalSection}

In this section, we recall the formalism of Segal objects to study faithful maps and prove Theorem \ref{FaithfulLiftingIntro} on lifting through faithful functors of $n$-uple categories. This is a categorification of covering space theory.

\begin{definition}
Let $\C$ be a finitely complete category. A \textit{Segal object} in $\C$ is a simplicial object $X\colon\Delta^{\op}\rightarrow\C$ that satisfies the \textit{Segal condition}, i.e. the canonical map $X_n\rightarrow X_1\times_{X_0}\dotsc\times_{X_0}X_1$ is an equivalence for all $n\ge2$. It is called a \textit{groupoid object} if the canonical map $X_n\rightarrow X_S\times_{X_{\{s\}}}X_{S^{\prime}}$ is an equivalence for every partition $[n]=S\cup S^{\prime}$ with $S\cap S^{\prime}=\{s\}$. The full subcategories of $\Fun(\Delta^{\op},\C)$ on the groupoid resp. Segal objects are denoted $\Gpd(\C)\subseteq\Seg(\C)$.
\end{definition}

\begin{proposition}[\protect{\cite[Proposition 1.1.14]{Lurie}}]\label{GroupoidCore}
Let $\C$ be a finitely complete category. The inclusion $\Gpd(\C)\subseteq\Seg(\C)$ admits a right adjoint $(-)^{\simeq}\colon\Seg(\C)\rightarrow\Gpd(\C)$. The counit map $X^{\simeq}\hookrightarrow X$ is a monomorphism and induces an equivalence $X^{\simeq}_0\iso X_0$ for all $X\in\Seg(\C)$.\qed
\end{proposition}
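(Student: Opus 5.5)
The plan is to reduce to the case $\C=\An$ via the Yoneda embedding. Then I would construct the core by hand for Segal anima, and finally show that the construction is representable whenever $X$ is a Segal object of an arbitrary finitely complete $\C$.

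\textbf{Reduction.} The Yoneda embedding $y\colon\C\hookrightarrow\PSh(\C)$ is fully faithful and preserves finite limits. Hence it induces fully faithful functors $\Seg(\C)\hookrightarrow\Seg(\PSh(\C))$ and $\Gpd(\C)\hookrightarrow\Gpd(\PSh(\C))$. Segal (resp. groupoid) objects in $\PSh(\C)\simeq\Fun(\C^{\op},\An)$ are simply functors $\C^{\op}\rightarrow\Seg(\An)$ (resp. $\Gpd(\An)$), since all the conditions are checked pointwise. It therefore suffices to do two things:
\begin{itemize}
\item construct a right adjoint $(-)^{\simeq}\colon\Seg(\An)\rightarrow\Gpd(\An)$ with the stated properties, which then applies pointwise to presheaves and is natural in $c\in\C$;
\item show that if $X$ is levelwise representable, then so is $X^{\simeq}$.
\end{itemize}
Monomorphisms and equivalences are detected after applying $y$, so the counit statements transfer along the reduction.

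\textbf{The case of anima.} For a Segal anima $Y$, define $Y^{\simeq}_1\subseteq Y_1$ as the union of those path components that consist of invertible edges, i.e. of edges that become isomorphisms in the homotopy category $h Y$. Set
\begin{equation*}
Y^{\simeq}_n\coloneqq Y_n\times_{Y_1^{\times n}}(Y_1^{\simeq})^{\times n},
\end{equation*}
the components all of whose edges are invertible; these inclusions are monomorphisms of anima. Since degenerate edges are invertible, $Y^{\simeq}_0=Y_0$. Since invertible edges are closed under composition, $Y^{\simeq}$ is a simplicial subobject satisfying the Segal condition. It is a groupoid object because its homotopy category is a groupoid, which makes the maps $Y^{\simeq}_n\rightarrow Y^{\simeq}_S\times_{Y^{\simeq}_{\{s\}}}Y^{\simeq}_{S'}$ equivalences: one reduces via Segal to $n=2$ and uses that composing with an invertible edge is an equivalence on mapping anima.

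For the universal property, a map $G\rightarrow Y$ from a groupoid object sends every edge to an invertible edge. Indeed, in $G$ each edge admits an inverse witnessed by $2$-simplices, and these are mapped to $2$-simplices in $Y$. Because $Y^{\simeq}\hookrightarrow Y$ is a levelwise union of components, $\Map(G,Y^{\simeq})\rightarrow\Map(G,Y)$ is an equivalence.

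\textbf{Representability.} This is the main obstacle: $Y^{\simeq}_1$ was defined as a union of components, which is not a finite limit construction. I would instead use the $2$-out-of-$6$ description. Let
\begin{equation*}
Z\coloneqq Y_3\times_{Y_{\{0,2\}}\times Y_{\{1,3\}}}(Y_0\times Y_0),
\end{equation*}
where the maps $Y_0\rightarrow Y_{\{0,2\}},Y_{\{1,3\}}$ are the degeneracies. Thus $Z$ is the anima of composable triples $(f,g,h)$ together with identifications $gf\simeq\id$ and $hg\simeq\id$. The key claim is that the map $Z\rightarrow Y_{\{1,2\}}=Y_1$, $(f,g,h)\mapsto g$, is an equivalence onto $Y_1^{\simeq}$. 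Its image is clearly contained in $Y_1^{\simeq}$. For the converse, the fibre over an invertible $g\colon a\rightarrow b$ should be the product of the fibres of $g^{\circ}\colon\Map(b,a)\rightarrow\Map(a,a)$ and $g_{\circ}\colon\Map(b,a)\rightarrow\Map(b,b)$ over the identities, using the Segal condition to express $Y_3$. Both of these maps are equivalences because $g$ is invertible, so the fibre is contractible.

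Granting this, when $Y=\Map_{\C}(c,X_{\bullet})$ the anima $Y^{\simeq}_1\simeq Z$ is a finite limit of representables, hence represented by the corresponding finite limit of the $X_i$ in $\C$. Then $Y^{\simeq}_n$ is represented by $X_n\times_{X_1^{\times n}}(X_1^{\simeq})^{\times n}$. The adjunction and the counit statements then follow from the anima case.
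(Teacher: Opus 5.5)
Your proof is correct. The paper gives no argument of its own and defers to Lurie, and your route is the standard one behind that citation: reduce to Segal anima via the Yoneda embedding, take the invertible components levelwise, and get representability by identifying $Y_1^{\simeq}$ with the finite limit $X_3\times_{X_1\times X_1}(X_0\times X_0)$, which is exactly the $X_{\eq}$ of Remark \ref{CompletenessCharacterization}. The only differences are cosmetic: the paper maps $X_{\eq}$ to $X_1$ via $d_{\{0,3\}}$ rather than your $d_{\{1,2\}}$, which is harmless because on $X_{\eq}$ these two edges are mutually inverse, and your fibre computation is right once one notes that the identifications with degenerate edges include endpoint paths in $Y_0$, which are absorbed by contracting based path spaces.
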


\begin{definition}
A Segal object $X$ in a finitely complete category $\C$ is \textit{complete} if the groupoid object $X^{\simeq}$ is constant. The full subcategory of $\Seg(\C)$ on the complete Segal objects is denoted $\Cat(\C)$. The forgetful functor $\ev_0\colon\Cat(\C)\rightarrow\C$ is then also denoted $(-)^{\simeq}\colon\Cat(\C)\rightarrow\C$.
\end{definition}
\begin{remark}
For $\C=\An$, the Rezk nerve $N\colon\Cat\rightarrow\Fun(\Delta^{\op},\An),\,\C\mapsto([n]\mapsto\Map_{\Cat}([n],\C))$ is fully faithful and induces an equivalence onto the full subcategory of complete Segal anima (\cite{JoyalTierney}, \cite{HebestreitSteinebrunner}).
\end{remark}

\begin{remark}\label{CompletenessCharacterization}
Let $X$ be a Segal object in $\C$ and consider the pullback
\begin{equation*}
\begin{tikzcd}
X_{\eq}\arrow[r]\arrow[d] & X_3\arrow[d,"{(d_{\{1,3\}},d_{\{0,2\}})}"]\\
X_0\times X_0\arrow[r,"s_0\times s_0"] & X_1\times X_1.
\end{tikzcd}
\end{equation*}
The canonical map $X_{\eq}\rightarrow X_3\stackrel{d_{\{0,3\}}}{\rightarrow}X_1$ is a monomorphism by \cite[Proposition 1.1.14]{Lurie}, the degeneracy factors through this monomorphism as a map $s_0\colon X_0\rightarrow X_{\eq}$ and loc. cit. implies that $X$ is complete if and only if this map is an equivalence.
\end{remark}

\begin{definition}
For $n\in\N$, we inductively define an $n$-uple (complete) Segal object in a finitely completely category $\C$ to be a (complete) Segal object in $(n-1)$-uple (complete) Segal objects in $\C$ if $n\ge1$. The full subcategory of $\Fun(\Delta^{\times n,\op},\C)$ on the $n$-uple (complete) Segal objects is denoted $\Seg^n(\C)$ resp. $\Cat^n(\C)$.
\end{definition}
\begin{remark}
For $\C=\An$, we denote $n\Cat^{\uple}\coloneqq\Cat^n(\An)$ and consider it as the category of \textit{$n$-uple categories}. The category $n\Cat$ of \textit{$n$-categories} is the full subcategory of $n\Cat^{\uple}$ on those $n$-uple categories that satisfy the \textit{globularity} condition: for every $k_1,\dotsc,k_{i-1}\ge0$, the $(n-i)$-uple simplicial anima $\C_{k_1,\dotsc,k_{i-1},0,\bullet,\dotsc,\bullet}$ is constant (see \cite[Section 1.3]{Lurie}, \cite[Section 14]{BarwickSchommerPries} and cf. \cite[Lemma 2.8]{JohnsonFreydScheimbauer}).
\end{remark}

\begin{proposition}\label{SegAdjunction}
Let $\C$ be a finitely complete category and $n\ge0$. There is an adjunction
\begin{equation*}
\begin{tikzcd}[column sep=5em]
\Seg^n(\C)\arrow[r,"\ev_{0,\dotsc,0}"{description}]\arrow[r,shift left=0.75em,hookleftarrow,"\const"]\arrow[r,shift right=0.75em,hookleftarrow,"T"'] & \C,
\end{tikzcd}
\end{equation*}
in which $\ev_{0,\dotsc,0}$ is both a reflective and a coreflective localization.
\end{proposition}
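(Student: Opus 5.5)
We construct the two adjoints explicitly and check the adjunction equivalences on mapping anima. The constant functor $\const\colon\C\rightarrow\Fun(\Delta^{\times n,\op},\C)$ lands in $\Seg^n(\C)$, since a constant simplicial object trivially satisfies the Segal condition: all fibre products $c\times_c\dotsb\times_c c$ are $c$. For the right adjoint we take right Kan extension along the inclusion of the single object $([0],\dotsc,[0])$. Explicitly, $T(c)_{k_1,\dotsc,k_n}=c^{\times(k_1+1)\dotsm(k_n+1)}$, that is, the $n$-uple \emph{codiscrete} (or \emph{chaotic}) object indexed by $\prod_i\Hom([0],[k_i])$. This right Kan extension exists because $\C$ has finite products.

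First we show $\const\dashv\ev_{0,\dotsc,0}$. Since $[0]$ is terminal in $\Delta$, the object $([0],\dotsc,[0])$ is initial in $\Delta^{\times n,\op}$. Hence $\ev_{0,\dotsc,0}$ is the limit functor, and the left adjoint $\const$ is left Kan extension along the inclusion of the initial object. Concretely, $\Map(\const c,X)\simeq\Map_{\C}(c,X_{0,\dotsc,0})$, because a cone from a constant diagram is determined by its value at the initial object. This holds on all of $\Fun(\Delta^{\times n,\op},\C)$, so it restricts to $\Seg^n(\C)$ because $\const$ lands there.

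Next we show $\ev_{0,\dotsc,0}\dashv T$. On $\Fun(\Delta^{\times n,\op},\C)$ this is the general adjunction between restriction along $\{([0],\dotsc,[0])\}\hookrightarrow\Delta^{\times n,\op}$ and right Kan extension along it. The pointwise formula is the limit over $([0],\dotsc,[0])\rightarrow([k_1],\dotsc,[k_n])$ in $\Delta^{\times n,\op}$. These are the maps $[k_i]\rightarrow[0]$ in $\Delta$, of which there is exactly one. I should double-check the variance here: the right Kan extension along $\iota\colon\ast\rightarrow\mathcal{I}$ is $\prod_{\Hom_{\mathcal{I}}(x,\iota\ast)}c$. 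With $\mathcal{I}=\Delta^{\times n,\op}$, the indexing set is $\Hom_{\Delta^{\times n}}(([0],\dotsc),([k_i]))$, which has $\prod(k_i+1)$ elements, as claimed.

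It remains to check that $T(c)$ is an $n$-uple Segal object. By induction on $n$, with the Segal condition checked one variable at a time, this reduces to the one-variable statement for $c^{S}$ with $S=\Hom([0],[k])=\{0,\dotsc,k\}$ (possibly with extra product factors, which commute with fibre products). For $n=1$ we need $c^{\{0,\dotsc,k\}}\simeq c^{\{0,1\}}\times_{c}\dotsb\times_c c^{\{k-1,k\}}$, which holds since $c^{A\cup B}\simeq c^A\times_{c^{A\cap B}}c^B$ for finite sets. The main obstacle is only keeping the multi-index bookkeeping honest, since the statement is pointwise finite-limit calculus.

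Finally, we check that both adjoints are fully faithful, i.e. that $\ev_{0,\dotsc,0}$ is both a reflective and a coreflective localization. The unit $c\rightarrow\ev_{0,\dotsc,0}\const c=c$ and the counit $\ev_{0,\dotsc,0}T c=c^{\times1}\rightarrow c$ are both the identity, since Kan extension along a fully faithful inclusion (here of a point with trivial endomorphisms, as $\Hom_\Delta([0],[0])=\ast$) restricts back to the original functor.
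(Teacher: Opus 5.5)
Your proposal is correct and follows essentially the same route as the paper. In both, $\ev_{0,\dotsc,0}$ is the limit functor with left adjoint $\const$, the right adjoint $T$ is the pointwise right Kan extension $c\mapsto c^{\times(k_1+1)\dotsm(k_n+1)}$, and both adjoints land in $\Seg^n(\C)$. The only difference is that you check full faithfulness of $T$ directly via the counit, using that the inclusion of $([0],\dotsc,[0])$ is fully faithful, whereas the paper deduces it from full faithfulness of $\const$ (weak contractibility of $\Delta^{\times n,\op}$) by an adjoint-triple argument.

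In a write-up, delete your first variance sentence claiming there is exactly one relevant map $[k_i]\rightarrow[0]$. The correct indexing set is $\prod_i\Hom_{\Delta}([0],[k_i])$, as in your own correction.
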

\begin{proof}[Proof.]
The evaluation functor $\ev_{0,\dotsc,0}\colon\Fun(\Delta^{\times n,\op},\C)\rightarrow\C$ is equivalently the limit functor and admits a left adjoint $\const$ taking constant diagrams, which clearly factors through the full subcategory on the $n$-uple Segal objects. To see that $\ev_{0,\dotsc,0}$ has a right adjoint, consider the pointwise formula for right Kan extensions: the right Kan extension of $X\colon\{[0,\dotsc,0]\}\rightarrow\C$ along $\{[0,\dotsc,0]\}\hookrightarrow\Delta^{\times n,\op}$ is given by
\begin{equation*}
[k_1,\dotsc,k_n]\mapsto\lim_{(\Delta^{\times n,\op})_{[k_1,\dotsc,k_n]/}\times_{\Delta^{\times n,\op}}\{[0,\dotsc,0]\}}X\simeq\lim_{\prod_{i=1}^n\Map_{\Delta^{\op}}([0],[k_i])}X\simeq X^{\times(k_1+1)\dotsc(k_n+1)},
\end{equation*}
which exists by finite completeness of $\C$ and factors through the full subcategory on the $n$-uple Segal objects by inspection. Finally, note that $\const$ is fully faithful because $\Delta^{\times n,\op}$ is weakly contractible and $T$ is then fully faithful by abstract nonsense.
\end{proof}

\begin{definition}\label{FaithfulSegal}
A map $f\colon X\rightarrow Y$ of $n$-uple Segal objects is called \textit{faithful} resp. \textit{fully faithful} if the induced map $f^{\prime}\colon X\rightarrow Y\times_{TY_{0,\dotsc,0}}TX_{0,\dotsc,0}$ is a monomorphism resp. equivalence in $\Seg^n(\C)$.
\end{definition}

\begin{lemma}
The class of (fully) faithful functors contains the equivalences, is stable under under limits in the arrow category, in particular under base-change, and satisfies left cancellation.\qed
\end{lemma}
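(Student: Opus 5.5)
The lemma has three parts: equivalences, stability under limits in the arrow category, and left cancellation. I will handle all three through the functorial description of Definition \ref{FaithfulSegal}. Write $G\colon\Fun([1],\Seg^n(\C))\rightarrow\Fun([1],\Seg^n(\C))$ for the functor sending $f\colon X\rightarrow Y$ to
\begin{equation*}
f^{\prime}\colon X\rightarrow Y\times_{TY_{0,\dotsc,0}}TX_{0,\dotsc,0}.
\end{equation*}
Then $f$ is faithful (resp. fully faithful) exactly when $G(f)$ is a monomorphism (resp. an equivalence).

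For equivalences: if $f$ is an equivalence, then so is $Tf_{0,\dotsc,0}$. The pullback projection $Y\times_{TY_{0,\dotsc,0}}TX_{0,\dotsc,0}\rightarrow Y$ is therefore an equivalence, and $f^{\prime}$ composed with it is $f$. By two-out-of-three, $f^{\prime}$ is an equivalence, in particular a monomorphism.

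For limits: $G$ is built from $\ev_{0,\dotsc,0}$, the right adjoint $T$ of Proposition \ref{SegAdjunction}, and a pullback. All three commute with limits, since $\ev_{0,\dotsc,0}$ is a right adjoint as well (it has left adjoint $\const$), and limits in $\Seg^n(\C)$ are computed pointwise in $\Fun(\Delta^{\times n,\op},\C)$. So $G$ preserves limits computed in the arrow category. Monomorphisms and equivalences are each stable under limits in the arrow category: a morphism is a monomorphism iff its diagonal is an equivalence, and diagonals commute with limits. Hence a limit of (fully) faithful maps is (fully) faithful. Base change is the special case of a pullback in $\Fun([1],\Seg^n(\C))$.

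For left cancellation: take $f\colon X\rightarrow Y$ and $g\colon Y\rightarrow Z$ with $g$ and $gf$ (fully) faithful, and show $f$ is. I would factor $(gf)^{\prime}$ as
\begin{equation*}
X\stackrel{f^{\prime}}{\longrightarrow}Y\times_{TY_0}TX_0\longrightarrow (Z\times_{TZ_0}TY_0)\times_{TY_0}TX_0\simeq Z\times_{TZ_0}TX_0,
\end{equation*}
writing $0$ for the multi-index $(0,\dotsc,0)$. The second map is the base change of $g^{\prime}$ along $TX_0\rightarrow TY_0$, so it is a monomorphism (resp. an equivalence). Left cancellation holds for monomorphisms: if $hk$ is mono and $h$ is mono, then $k$ is mono. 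It holds for equivalences by two-out-of-three. Either way $f^{\prime}$ is a monomorphism (resp. an equivalence).

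The only point needing care is the identification of the composite above with $(gf)^{\prime}$. This follows from pasting of pullback squares together with naturality of the unit $\id\rightarrow T\ev_{0,\dotsc,0}$, so I expect no real obstacle; the whole argument is formal.
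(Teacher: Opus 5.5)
Your proposal is correct and is the routine formal verification that the paper omits (it states this lemma with \qed\ and no proof): the assignment $f\mapsto f^{\prime}$ preserves limits in the arrow category because it is built from the right adjoints $\ev_{0,\dotsc,0}$ and $T$ and a pullback, and left cancellation follows by factoring $(gf)^{\prime}$ through a base change of $g^{\prime}$. You also correctly require $h$ to be a monomorphism in the cancellation step, which is necessary in the $\infty$-categorical setting.
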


\begin{proposition}\label{FaithfulVsMono}
Let $f\colon X\rightarrow Y$ be a map of $n$-uple Segal objects in a finitely completely category $\C$. Then $f$ is a monomorphism if and only if it is faithful and $f_{0,\dotsc,0}\colon X_{0,\dotsc,0}\rightarrow Y_{0,\dotsc,0}$ is a monomorphism.
\end{proposition}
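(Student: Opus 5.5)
The plan is to factor $f$ as
\begin{equation*}
X\stackrel{f^{\prime}}{\longrightarrow}Y\times_{TY_{0,\dotsc,0}}TX_{0,\dotsc,0}\stackrel{q}{\longrightarrow}Y,
\end{equation*}
where $q$ is the base change of $T(f_{0,\dotsc,0})\colon TX_{0,\dotsc,0}\rightarrow TY_{0,\dotsc,0}$. Throughout, monomorphisms in $\Seg^n(\C)$ are detected levelwise. This holds because $\Seg^n(\C)\subseteq\Fun(\Delta^{\times n,\op},\C)$ is closed under finite limits, so diagonals are computed levelwise, and a map is a monomorphism exactly when its diagonal is an equivalence.

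For the backward direction, suppose $f$ is faithful and $f_{0,\dotsc,0}$ is a monomorphism. The functor $T$ is a right adjoint, so it preserves monomorphisms, and $T(f_{0,\dotsc,0})$ is a monomorphism. Its base change $q$ is therefore a monomorphism. Faithfulness says $f^{\prime}$ is a monomorphism, so the composite $f=q\circ f^{\prime}$ is a monomorphism.

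For the forward direction, suppose $f$ is a monomorphism. Then $f_{0,\dotsc,0}=\ev_{0,\dotsc,0}(f)$ is a monomorphism, since $\ev_{0,\dotsc,0}$ is a right adjoint to $\const$ by Proposition \ref{SegAdjunction} and hence preserves monomorphisms. For faithfulness, use left cancellation for monomorphisms: if $q\circ f^{\prime}$ is a monomorphism and $q$ is a monomorphism, then $f^{\prime}$ is a monomorphism. This follows from the diagonal characterisation, because $\Delta_{f}$ factors as $\Delta_{f^{\prime}}$ followed by the base change of $\Delta_q$ along $X\times_ZX\rightarrow X\times_YX$, where $Z=Y\times_{TY_{0,\dotsc,0}}TX_{0,\dotsc,0}$. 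Since $q$ is a monomorphism by the previous paragraph, $f^{\prime}$ is one.

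More simply, without the cancellation lemma: $f^{\prime}$ followed by the monomorphism $q$ equals $f$, and $\Map(W,X)\rightarrow\Map(W,Z)\rightarrow\Map(W,Y)$ has its composite and second map both $(-1)$-truncated, so the first map is $(-1)$-truncated.

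I expect no real obstacle. The only points needing care are the standard facts that right adjoints such as $T$ and $\ev_{0,\dotsc,0}$ preserve monomorphisms, that monomorphisms are stable under base change, and the cancellation property.
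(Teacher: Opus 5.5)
Your proof is correct and is essentially the paper's argument: factor $f$ as $f'$ followed by the base change $q$ of $T(f_{0,\dotsc,0})$, use that the right adjoints $T$ and $\ev_{0,\dotsc,0}$ preserve monomorphisms so that $q$ is a monomorphism, and conclude by closure of monomorphisms under composition and left cancellation. You are also right to require that $q$ itself be a monomorphism in the cancellation step, since in the $\infty$-categorical setting a composite being a monomorphism does not by itself force the first factor to be one.
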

\begin{proof}[Proof.]
The map $f$ is a composite of $f^{\prime}\colon X\rightarrow Y\times_{TY_{0,\dotsc,0}}TX_{0,\dotsc,0}$ and the map $Y\times_{TY_{0,\dotsc,0}}TX_{0,\dotsc,0}\rightarrow Y$ base-changed from $Tf_{0,\dotsc,0}\colon TX_{0,\dotsc,0}\rightarrow TY_{0,\dotsc,0}$. In particular, the latter map is a monomorphism if $f$ is a monomorphism since $T$ and $\ev_{0,\dotsc,0}$ are right adjoints, hence preserve monomorphisms. In this case, we see that $f$ is a monomorphism if and only if $f^{\prime}$ is a monomorphism, i.e. $f$ is faithful, by left cancellation.
\end{proof}

\begin{proposition}\label{FaithfulInductive}
Let $f\colon X\rightarrow Y$ be a map of $n$-uple Segal objects in a finitely complete category $\C$. Then $f$ is (fully) faithful if and only if it is (fully) faithful as a map of Segal objects in $\Seg^{n-1}(\C)$ and the underlying map $i_0^{\ast}f$ of $(n-1)$-uple Segal objects in $\C$ is (fully) faithful.
\end{proposition}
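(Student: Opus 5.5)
The plan is to compare the two relevant right adjoints and factor the comparison map $f^{\prime}$ of Definition \ref{FaithfulSegal} into two pieces, one for each condition in the statement.

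\textbf{Factoring the right adjoints.} Write $T_n\colon\C\rightarrow\Seg^n(\C)$ for the right adjoint of Proposition \ref{SegAdjunction}. Write $T^{\prime}\colon\Seg^{n-1}(\C)\rightarrow\Seg(\Seg^{n-1}(\C))\simeq\Seg^n(\C)$ for the right adjoint of $\ev_0$, which is the case $n=1$ of Proposition \ref{SegAdjunction} applied to the finitely complete category $\Seg^{n-1}(\C)$. Since $\ev_{0,\dotsc,0}=\ev_{0,\dotsc,0}\circ\ev_0$ (the first evaluation being the $(n-1)$-fold one), uniqueness of adjoints gives $T_n\simeq T^{\prime}\circ T_{n-1}$. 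The explicit Kan extension formula gives $(T^{\prime}Z)_k\simeq Z^{\times(k+1)}$, and in particular $\ev_0T^{\prime}\simeq\id$.

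\textbf{Factoring $f^{\prime}$.} The map $f^{\prime}\colon X\rightarrow Y\times_{T_nY_{0,\dotsc,0}}T_nX_{0,\dotsc,0}$ factors as
\begin{equation*}
X\xrightarrow{\;a\;}Y\times_{T^{\prime}Y_0}T^{\prime}X_0\xrightarrow{\;b\;}Y\times_{T^{\prime}T_{n-1}Y_{0,\dotsc,0}}T^{\prime}T_{n-1}X_{0,\dotsc,0}.
\end{equation*}
Here $a$ is the comparison map witnessing faithfulness of $f$ as a map of Segal objects in $\Seg^{n-1}(\C)$. Consider the comparison map $g\colon X_0\rightarrow Y_0\times_{T_{n-1}Y_{0,\dotsc,0}}T_{n-1}X_{0,\dotsc,0}$ for $i_0^{\ast}f$. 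By pasting of pullbacks, and since $T^{\prime}$ preserves pullbacks, the map $b$ is the base change of $T^{\prime}(g)$ along the projection to the target of $T^{\prime}(g)$. Concretely, one rewrites the target of $b$ as $Y\times_{T^{\prime}Y_0}T^{\prime}(Y_0\times_{T_{n-1}Y_{0,\dotsc,0}}T_{n-1}X_{0,\dotsc,0})$. Checking this pasting carefully is the only real bookkeeping in the argument.

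\textbf{Concluding.} Suppose first that $a$ and $g$ are monomorphisms (resp. equivalences). Then $T^{\prime}(g)$ is one too, because right adjoints preserve monomorphisms. Hence $b$ is one by base change, and so is the composite $f^{\prime}=b\circ a$.

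Conversely, suppose $f^{\prime}$ is a monomorphism (resp. equivalence). The functor $\ev_0$ is a right adjoint (to $\const$) and preserves pullbacks, and $\ev_0T^{\prime}\simeq\id$. Applying $\ev_0$ to $f^{\prime}$ therefore gives, up to equivalence, exactly $g$: the map $X_0\rightarrow Y_0\times_{T_{n-1}Y_{0,\dotsc,0}}T_{n-1}X_{0,\dotsc,0}$. So $g$ is a monomorphism (resp. equivalence), i.e. $i_0^{\ast}f$ is (fully) faithful. By the first direction, $b$ is then a monomorphism (resp. equivalence). Left cancellation for monomorphisms (resp. two-out-of-three for equivalences) applied to $f^{\prime}=b\circ a$ then shows that $a$ is a monomorphism (resp. equivalence). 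This means $f$ is (fully) faithful as a map of Segal objects in $\Seg^{n-1}(\C)$.
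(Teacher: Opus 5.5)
Your proposal is correct and follows essentially the same route as the paper: you factor $f^{\prime}$ through $Y\times_{T^{\prime}i_0^{\ast}Y}T^{\prime}i_0^{\ast}X$ and identify the second map as a base change of $T^{\prime}$ applied to the comparison map of $i_0^{\ast}f$. As in the paper, you obtain that comparison map by applying $i_0^{\ast}$ to $f^{\prime}$ and conclude by left cancellation, and you additionally spell out two points the paper leaves implicit: that $\ev_0T^{\prime}\simeq\id$, and that $b$ must first be shown to be a monomorphism before left cancellation applies.
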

\begin{proof}[Proof.]
Consider the adjunctions provided by Proposition \ref{SegAdjunction}:
\begin{equation*}
\begin{tikzcd}[column sep=5em]
\Seg^n(\C)\arrow[r,"i_0^{\ast}"{description}]\arrow[r,shift left=0.75em,hookleftarrow,"\const"]\arrow[r,shift right=0.75em,hookleftarrow,"T^{\prime}"'] & \Seg^{n-1}(\C)\arrow[r,"\ev_{0,\dotsc,0}"{description}]\arrow[r,shift left=0.75em,hookleftarrow,"\const"]\arrow[r,shift right=0.75em,hookleftarrow,"T_{n-1}"'] & \C,
\end{tikzcd}
\end{equation*}
where we denote the composite right adjoint $T_n\simeq T^{\prime}T_{n-1}$. Then we can factor the induced map $f^{\prime}$ as
\begin{equation*}
\begin{tikzcd}
X\arrow[r] & Y\times_{T^{\prime}i_0^{\ast}Y}T^{\prime}i_0^{\ast}X\arrow[r]\arrow[d]\pb & Y\times_{T_nY_{0,\dotsc,0}}T_nX_{0,\dotsc,0}\arrow[d]\\
& T^{\prime}i_0^{\ast}X\arrow[r,"T^{\prime}(i_0^{\ast}f)^{\prime}"] & T^{\prime}i_0^{\ast}Y\times_{T^{\prime}T_{n-1}Y_{0,\dotsc,0}}T^{\prime}T_{n-1}Y_{0,\dotsc,0}.
\end{tikzcd}
\end{equation*}
The map $(i_0^{\ast}f)^{\prime}\colon i_0^{\ast}X\rightarrow i_0^{\ast}Y\times_{T_{n-1}Y_{0,\dotsc,0}}T_{n-1}X_{0,\dotsc,0}$ is obtained by applying $i_0^{\ast}$ to the map $f^{\prime}\colon X\rightarrow Y\times_{T_nY_{0,\dotsc,0}}T_nX_{0,\dotsc,0}$. In particular, if $f$ is (fully) faithful, then $i_0^{\ast}f$ is (fully) faithful. In this case, the right horizontal maps in the above diagram are monomorphisms resp. equivalences, so the left map is a monomorphism resp. equivalence if and only if $f^{\prime}$ is a monomorphism resp. equivalence by left cancellation.
\end{proof}

\begin{proposition}\label{FaithfulLiftingSegal}
Let $\C$ be a finitely complete category and $f\colon X\rightarrow Y$ a faithful map of $n$-uple Segal objects in $\C$. Then, for any map $Z\rightarrow Y$ of $n$-uple Segal objects in $\C$, the induced map
\begin{equation*}
\begin{tikzcd}
\ev_{0,\dotsc,0}\colon\Map_{\Seg^n(\C)_{/Y}}(Z,X)\arrow[r,hookrightarrow] & \Map_{\C_{/Y_{0,\dotsc,0}}}(Z_{0,\dotsc,0},X_{0,\dotsc,0})
\end{tikzcd}
\end{equation*}
is a monomorphism of anima. The image consists of precisely those components on the maps $Z_{0,\dotsc,0}\rightarrow X_{0,\dotsc,0}$ over $Y_{0,\dotsc,0}$ such that the induced map $Z\rightarrow Y\times_{TY_{0,\dotsc,0}}TX_{0,\dotsc,0}$ factors through $X\hookrightarrow Y\times_{TY_{0,\dotsc,0}}TX_{0,\dotsc,0}$.
\end{proposition}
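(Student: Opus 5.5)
The plan is to factor the mapping anima through the relative pullback $P\coloneqq Y\times_{TY_{0,\dotsc,0}}TX_{0,\dotsc,0}$. By Definition \ref{FaithfulSegal}, faithfulness of $f$ means that $f^{\prime}\colon X\rightarrow P$ is a monomorphism in $\Seg^n(\C)$. Since $f\simeq\mathrm{pr}_Y\circ f^{\prime}$, postcomposition with $f^{\prime}$ gives a map
\begin{equation*}
\Map_{\Seg^n(\C)_{/Y}}(Z,X)\rightarrow\Map_{\Seg^n(\C)_{/Y}}(Z,P).
\end{equation*}
This map is a monomorphism of anima. Indeed, mapping anima in an overcategory are fibers of mapping anima over a fixed point, so the map is a base change of $f^{\prime}_{\circ}\colon\Map(Z,X)\rightarrow\Map(Z,P)$ over $\Map(Z,Y)$, and $f^{\prime}_{\circ}$ is a monomorphism by definition of a monomorphism. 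Its image consists precisely of those components of maps $Z\rightarrow P$ over $Y$ that factor through $X\hookrightarrow P$.

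Next, I identify the target. Because $P$ is a pullback, maps $Z\rightarrow P$ over $Y$ are equivalent to maps $Z\rightarrow TX_{0,\dotsc,0}$ over $TY_{0,\dotsc,0}$, where $Z$ maps to $TY_{0,\dotsc,0}$ via $Z\rightarrow Y\rightarrow TY_{0,\dotsc,0}$, the second map being the unit. Concretely, $\Map_{/Y}(Z,Y\times_{TY_0}TX_0)\simeq\Map_{/TY_0}(Z,TX_0)$, which again follows from computing fibers of mapping anima. By the adjunction $\ev_{0,\dotsc,0}\dashv T$ of Proposition \ref{SegAdjunction}, applied in slices (an adjunction induces adjunctions on slices via the unit and counit), this is equivalent to $\Map_{\C_{/Y_{0,\dotsc,0}}}(Z_{0,\dotsc,0},X_{0,\dotsc,0})$. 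Here I use that the counit $\ev_{0,\dotsc,0}T\rightarrow\id$ is an equivalence because $T$ is fully faithful. The structure map $Z_{0,\dotsc,0}\rightarrow Y_{0,\dotsc,0}$ is then $\ev_{0,\dotsc,0}$ of the given map, as required.

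The remaining step is to check that the composite monomorphism agrees with $\ev_{0,\dotsc,0}$. A map $g\colon Z\rightarrow X$ over $Y$ goes to $f^{\prime}g$, then to its component $Z\rightarrow TX_0$, which is the unit of $X$ composed with $g$. Its adjunct is $\ev_{0,\dotsc,0}(g)$, by the triangle identity and naturality of the unit. I expect this bookkeeping, keeping track of the overcategory structure through the slice adjunction, to be the only real obstacle, and it is formal. The description of the image then transports directly: a map $Z_{0,\dotsc,0}\rightarrow X_{0,\dotsc,0}$ over $Y_{0,\dotsc,0}$ is in the image if and only if the corresponding map $Z\rightarrow P$ factors through $X$.
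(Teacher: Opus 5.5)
Your proposal is correct and follows essentially the same route as the paper. Faithfulness makes $f^{\prime}\colon X\hookrightarrow Y\times_{TY_{0,\dotsc,0}}TX_{0,\dotsc,0}$ a monomorphism in the slice over $Y$, and the resulting mapping anima is identified with $\Map_{\C_{/Y_{0,\dotsc,0}}}(Z_{0,\dotsc,0},X_{0,\dotsc,0})$ via the pullback and the sliced adjunction $\ev_{0,\dotsc,0}\dashv T$, with the image description tautological. Your explicit check that the composite is $\ev_{0,\dotsc,0}$ is a detail the paper leaves implicit; that check needs only the triangle identity, not the counit being an equivalence.
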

\begin{proof}[Proof.]
The map $f^{\prime}\colon X\hookrightarrow Y\times_{TY_0}TX_0$ is a monomorphism in $\Seg^n(\C)$, hence also in $\Seg^n(\C)_{/Y}$, and we obtain
\begin{equation*}
\begin{array}{rcl}
\Map_{\Seg^n(\C)_{/Y}}(Z,X) & \hookrightarrow & \Map_{\Seg^n(\C)_{/Y}}(Z,Y\times_{TY_{0,\dotsc,0}}TX_{0,\dotsc,0})\\
 & \simeq & \Map_{\Seg^n(\C)_{/TY_{0,\dotsc,0}}}(Z,TX_{0,\dotsc,0})\\
 & \simeq & \Map_{\C_{/Y_{0,\dotsc,0}}}(Z_{0,\dotsc,0},X_{0,\dotsc,0}).
\end{array}
\end{equation*}
The characterization of the essential image is tautological.
\end{proof}

Now, we specialize to $n$-uple \textit{complete} Segal objects.

\begin{proposition}\label{FaithfulIsTruncatedOnCore}
Let $f\colon X\rightarrow Y$ be a faithful resp. fully faithful map of complete Segal objects in a finitely complete category $\C$. Then $f_0\colon X_0\rightarrow Y_0$ is $0$-truncated resp. $(-1)$-truncated (i.e. a monomorphism).
\end{proposition}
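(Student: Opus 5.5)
We use the characterization of completeness from Remark \ref{CompletenessCharacterization}: for a complete Segal object, $s_0\colon X_0\to X_{\eq}$ is an equivalence. Hence $X_0$ is identified with the subobject $X_{\eq}\hookrightarrow X_1$ of invertible morphisms. The diagonal of $X_0$ can then be computed in terms of morphisms, where faithfulness gives control.

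The key observation is that the diagonal $\Delta\colon X_0\to X_0\times_{Y_0}X_0$ of $f_0$ sits in a pullback square with the map $X_1\to Y_1\times_{Y_0\times Y_0}(X_0\times X_0)$. This is precisely the $[1]$-component of $f'\colon X\to Y\times_{TY_0}TX_0$, since $(TX_0)_1=X_0\times X_0$.

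To see this, I would restrict the pullback defining $X_{\eq}$ along the diagonal-type maps. Write $P\coloneqq Y_{\eq}\times_{Y_0\times Y_0}(X_0\times X_0)$. The $[1]$-component of $f'$ restricts to a map $X_{\eq}\to P$. I claim the square
\begin{equation*}
\begin{tikzcd}
X_{\eq}\arrow[r]\arrow[d] & X_1\arrow[d,"f'_1"]\\
P\arrow[r] & Y_1\times_{Y_0\times Y_0}(X_0\times X_0)
\end{tikzcd}
\end{equation*}
is cartesian. The bottom map is a monomorphism by Remark \ref{CompletenessCharacterization} and base change. Cartesianness holds because a morphism of $X$ is invertible (lies in $X_{\eq}$) iff its image in $Y$ is. This uses faithfulness: the witnesses for invertibility live in $X_3$, and for a monomorphism $f'$ the data in $X_3$ lifting data in $Y_3$ with prescribed vertices is determined. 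Concretely, I would verify it by writing $X_{\eq}$ as the pullback of $X_3$ along $s_0\times s_0$, and using the Segal condition, which gives $X_3\simeq X_1\times_{X_0}X_1\times_{X_0}X_1$ and similarly for $f'_3$, to reduce to a statement about $f'_1$. I expect this to be the main obstacle, since one must show that the inverse and homotopies lift; their images lie in $Y$ with vertices in $X_0$, and the question is whether they come from $X_1$ rather than only from $Y_1\times(X_0\times X_0)$. If this direct lifting is problematic, I would fall back on weaker claims: that $X_{\eq}\to P$ is a monomorphism (clear as a restriction of the mono $f'_1$), and that $P\simeq Y_0\times_{Y_0\times Y_0}(X_0\times X_0)\simeq X_0\times_{Y_0}X_0$ via completeness of $Y$, that is, $Y_{\eq}\simeq Y_0$.

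Granting the identifications $X_{\eq}\simeq X_0$ and $P\simeq X_0\times_{Y_0}X_0$, the composite $X_0\to X_0\times_{Y_0}X_0$ is the diagonal of $f_0$. Compatibility with degeneracies, namely $s_0$ on both sides, checks this identification. Since $X_{\eq}\to P$ is a monomorphism, the diagonal is a monomorphism, so $f_0$ is $0$-truncated. The fallback suffices here, and the cartesian square is unnecessary.

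In the fully faithful case, $f'$ is an equivalence, so $X_{\eq}\to P$ is an equivalence. This requires that $f'$ preserves and reflects the equivalence data, which holds because $f'$ is an equivalence of Segal objects and $(-)_{\eq}$ is functorial in such. So the diagonal is an equivalence, and $f_0$ is a monomorphism.
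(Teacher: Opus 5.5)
Your fallback argument is correct, and it is essentially the paper's proof carried out by hand in simplicial degree $1$. The paper applies the right adjoint $(-)^{\simeq}\colon\Seg(\C)\to\Gpd(\C)$ to $f'\colon X\to Y\times_{TY_0}TX_0$. Being a right adjoint, it preserves monomorphisms, equivalences and the fiber product, and it fixes the groupoid objects $TX_0,TY_0$. Completeness then identifies $(f^{\simeq})'$ in degree $1$ with the diagonal $X_0\to X_0\times_{Y_0}X_0$, which is exactly your map $X_{\eq}\to P$. In the fully faithful case you should state explicitly that $(-)_{\eq}$, being a finite limit of levels, commutes with the fiber product, and that $(TX_0)_{\eq}\simeq X_0\times X_0$; the adjunction gives the paper both facts for free.

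Drop the cartesian-square claim entirely rather than leaving it as a hoped-for step. Faithful maps need not reflect invertibility: $[1]\to\ast$ is faithful, yet $0\to1$ is not invertible although its image is. So that square is not cartesian in the faithful case.
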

\begin{proof}[Proof.]
The functor $T\colon\C\rightarrow\Seg(\C)$ factors through $\Gpd(\C)$, so we obtain $(f^{\simeq})^{\prime}\colon X^{\simeq}\rightarrow Y^{\simeq}\times_{TY_0}TX_0$ by applying $(-)^{\simeq}$ to $f^{\prime}\colon X\rightarrow Y\times_{TY_0}TX_0$, i.e. $f^{\simeq}\colon X^{\simeq}\rightarrow Y^{\simeq}$ is (fully) faithful if $f\colon X\rightarrow Y$ is, but this map is the diagonal $X_0\rightarrow Y_0\times_{Y_0^{\times2}}X_0^{\times2}\simeq X_0\times_{Y_0}X_0$ by completeness, hence $f_0$ is $0$- resp. $(-1)$-truncated.
\end{proof}

\begin{corollary}\label{FaithfulOnObjects}
Let $f\colon X\rightarrow Y$ be a faithful resp. fully faithful map of $n$-uple complete Segal objects in a finitely complete category $\C$. Then $f_{0,\dotsc,0}\colon X_{0,\dotsc0}\rightarrow Y_{0,\dotsc,0}$ is $0$-truncated resp. $(-1)$-truncated (i.e. a monomorphism).
\end{corollary}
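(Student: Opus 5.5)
We argue by induction on $n$, using Proposition \ref{FaithfulInductive} to peel off one simplicial direction at a time and Proposition \ref{FaithfulIsTruncatedOnCore} for the base step. For $n=0$ there is nothing to prove: $f$ is a map in $\C$, and faithfulness means $X\to Y\times_Y X\simeq X$ is a monomorphism, which is automatic. The content of the statement begins at $n=1$, which is exactly Proposition \ref{FaithfulIsTruncatedOnCore}.

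For $n\ge2$, regard $f$ as a map of complete Segal objects in the finitely complete category $\Cat^{n-1}(\C)\subseteq\Seg^{n-1}(\C)$. By Proposition \ref{FaithfulInductive}, $f$ is (fully) faithful as a map of Segal objects in $\Seg^{n-1}(\C)$, and the underlying map $i_0^{\ast}f\colon X_{0,\bullet,\dotsc,\bullet}\to Y_{0,\bullet,\dotsc,\bullet}$ of $(n-1)$-uple complete Segal objects is (fully) faithful. We only need the second fact. By the induction hypothesis applied to $i_0^{\ast}f$, the map $(i_0^{\ast}f)_{0,\dotsc,0}=f_{0,\dotsc,0}$ is $0$-truncated, respectively $(-1)$-truncated.

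The one point to verify is the convention on which index Proposition \ref{FaithfulInductive} treats as the ``outer'' simplicial direction. We must check that $i_0^{\ast}$, the evaluation at $0$ in the outer Segal direction, is the restriction used in the definition of $n$-uple complete Segal objects, so that $i_0^{\ast}X$ is again an $(n-1)$-uple complete Segal object. This holds by the inductive definition $\Cat^n(\C)=\Cat(\Cat^{n-1}(\C))$, since the object of objects of a complete Segal object in $\Cat^{n-1}(\C)$ lies in $\Cat^{n-1}(\C)$.

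Alternatively, one could apply Proposition \ref{FaithfulIsTruncatedOnCore} once, directly in $\Cat^{n-1}(\C)$. This would require knowing that faithfulness in the $n$-uple sense implies faithfulness in the Segal-object-in-$\Seg^{n-1}(\C)$ sense, which Proposition \ref{FaithfulInductive} provides. It would then also require translating $0$-truncatedness of $i_0^{\ast}f$ in $\Seg^{n-1}(\C)$ to $0$-truncatedness of $f_{0,\dotsc,0}$ in $\C$. That translation holds because evaluation is a right adjoint, so it preserves limits and hence iterated diagonals. The inductive route avoids this translation, so I would use it.
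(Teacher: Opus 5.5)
Your argument is correct and is essentially the paper's proof: the paper also uses (the argument of) Proposition \ref{FaithfulInductive} to pass from $f$ to the (fully) faithful map $f_{0,\dotsc,0,\bullet}$ of complete Segal objects in $\C$, which is exactly what your induction unwinds to, and then applies Proposition \ref{FaithfulIsTruncatedOnCore}. One small correction: your claim that the case $n=0$ is vacuous is wrong, because the statement is false there (every map is fully faithful, since $f'$ is the identity of $X$, yet not every map in $\C$ is a monomorphism), so the corollary must be read for $n\ge1$ with $n=1$ as the base case, which is in effect what you do.
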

\begin{proof}[Proof.]
The induced map $f_{0,\dotsc,0,\bullet}\colon X_{0,\dotsc,0,\bullet}\rightarrow Y_{0,\dotsc,0,\bullet}$ is a faithful resp. fully faithful map of complete Segal objects in $\C$ by the argument of Proposition \ref{FaithfulInductive}. Then Proposition \ref{FaithfulIsTruncatedOnCore} applies.
\end{proof}
\begin{corollary}\label{FaithfulIsTruncated}
Faithful resp. fully faithful maps of $n$-uple complete Segal objects in a finitely complete category $\C$ are $0$-truncated resp. $(-1)$-truncated (i.e. monomorphisms).
\end{corollary}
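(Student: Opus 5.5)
We must show that a faithful map $f\colon X\rightarrow Y$ of $n$-uple complete Segal objects in $\C$ is $0$-truncated in $\Fun(\Delta^{\times n,\op},\C)$, and that a fully faithful one is $(-1)$-truncated. Truncatedness in $\Seg^n(\C)$ is levelwise, since limits are computed levelwise. We use the factorization from Definition \ref{FaithfulSegal},
\begin{equation*}
X\stackrel{f^{\prime}}{\longrightarrow}Y\times_{TY_{0,\dotsc,0}}TX_{0,\dotsc,0}\longrightarrow Y.
\end{equation*}

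The first map $f^{\prime}$ is a monomorphism by assumption, i.e. $(-1)$-truncated, and an equivalence in the fully faithful case. The second map is the base change of $Tf_{0,\dotsc,0}\colon TX_{0,\dotsc,0}\rightarrow TY_{0,\dotsc,0}$. By Corollary \ref{FaithfulOnObjects}, $f_{0,\dotsc,0}$ is $0$-truncated resp. $(-1)$-truncated. The functor $T$ is a right adjoint by Proposition \ref{SegAdjunction}, so it preserves finite limits. Since $k$-truncatedness of a map is expressed by its iterated diagonals being equivalences, which is a finite-limit condition, $T$ preserves $k$-truncated maps. Concretely, $T$ is levelwise a finite power $X\mapsto X^{\times N}$, and products of $k$-truncated maps are $k$-truncated. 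Base change also preserves $k$-truncated maps, so the second map is $0$- resp. $(-1)$-truncated.

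It remains to compose. In the faithful case we have a $(-1)$-truncated map followed by a $0$-truncated one, and the composite is $0$-truncated because $k$-truncated maps are closed under composition and $(-1)$-truncated maps are in particular $0$-truncated. In the fully faithful case the composite is an equivalence followed by a monomorphism, hence a monomorphism.

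The only input that needs care is the levelwise nature of truncatedness together with the preservation of truncated maps under $T$ and under base change. Both are standard consequences of limits in $\Seg^n(\C)$ being computed pointwise in $\C$, so I do not expect a real obstacle.
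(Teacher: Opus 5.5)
Your proof is correct, but for the faithful case it takes a different route from the paper.

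\textbf{Fully faithful case.} This matches the paper. The paper combines Proposition \ref{FaithfulVsMono}, whose proof is exactly your factorization of $f$ as $f^{\prime}$ followed by the base change of $Tf_{0,\dotsc,0}$, with Corollary \ref{FaithfulOnObjects}.

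\textbf{Faithful case.} Here the paper argues through the diagonal. It checks that the diagonal $X\rightarrow X\times_YX$ of a faithful map is fully faithful, using that $T$ preserves pullbacks. It then applies the fully faithful case to that diagonal, so $f$ is $0$-truncated because its diagonal is a monomorphism.

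\textbf{Comparison.} Your argument treats both cases uniformly with a single factorization. It uses the $0$-truncated half of Corollary \ref{FaithfulOnObjects} together with standard closure properties of $k$-truncated maps: preservation by left exact functors such as $T$, stability under base change, and closure under composition. In exchange, you avoid two checks the paper leaves implicit. One is that the diagonal is fully faithful. The other is that $X\times_YX$ is again an $n$-uple complete Segal object, which is needed to apply Corollary \ref{FaithfulOnObjects} to the diagonal. The paper's route, for its part, only ever uses the monomorphism half of Corollary \ref{FaithfulOnObjects}. It also makes visible the familiar pattern that a faithful map has fully faithful diagonal, so truncation level drops by one.
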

\begin{proof}[Proof.]
Fully faithful maps of $n$-uple complete Segal objects are monomorphisms by combining Proposition \ref{FaithfulVsMono} and Corollary \ref{FaithfulOnObjects}. It is straightforward to check that the diagonal of a faithful map is a fully faithful map using that $T$ is a right adjoint, hence the former is $0$-truncated.
\end{proof}
\begin{corollary}
Let $\C$ be a finitely complete category and $f\colon X\rightarrow Y$ a faithful map of $n$-uple complete Segal objects in $\C$. Then, for any map $Z\rightarrow Y$ of $n$-uple Segal objects in $\C$, $\Map_{\Seg^n(\C)_{/Y}}(Z,X)$ is discrete.\qed
\end{corollary}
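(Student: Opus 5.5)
The plan is to combine Proposition \ref{FaithfulLiftingSegal} with Corollary \ref{FaithfulOnObjects}.

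First, Proposition \ref{FaithfulLiftingSegal} applies to the faithful map $f\colon X\rightarrow Y$ and the given $Z\rightarrow Y$. It yields a monomorphism of anima
\begin{equation*}
\ev_{0,\dotsc,0}\colon\Map_{\Seg^n(\C)_{/Y}}(Z,X)\hookrightarrow\Map_{\C_{/Y_{0,\dotsc,0}}}(Z_{0,\dotsc,0},X_{0,\dotsc,0}).
\end{equation*}
A monomorphism of anima is $(-1)$-truncated, i.e.\ an inclusion of a union of path components. It therefore suffices to show that the target is discrete, because a subanima of a discrete anima is discrete.

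Second, I identify the target as a fiber of a post-composition map:
\begin{equation*}
\Map_{\C_{/Y_{0,\dotsc,0}}}(Z_{0,\dotsc,0},X_{0,\dotsc,0})\simeq\Map_{\C}(Z_{0,\dotsc,0},X_{0,\dotsc,0})\times_{\Map_{\C}(Z_{0,\dotsc,0},Y_{0,\dotsc,0})}\{\ast\},
\end{equation*}
namely the fiber of $(f_{0,\dotsc,0})_{\circ}$ over the structure map. By Corollary \ref{FaithfulOnObjects}, $f_{0,\dotsc,0}$ is $0$-truncated, since $X$ and $Y$ are $n$-uple complete Segal objects. In any category, post-composition with a $0$-truncated morphism induces $0$-truncated maps on mapping anima; indeed, this is the definition of truncatedness of morphisms in a general $\infty$-category. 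Hence the fibers of $(f_{0,\dotsc,0})_{\circ}$ are $0$-truncated, i.e.\ discrete.

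There is one subtlety to check: the meaning of ``$0$-truncated'' in Corollary \ref{FaithfulOnObjects} for a general finitely complete $\C$. It should be the standard notion, namely that the relative diagonal of $f_{0,\dotsc,0}$ is a monomorphism. This is exactly what the proof of Proposition \ref{FaithfulIsTruncatedOnCore} provides: the diagonal $X_0\rightarrow X_0\times_{Y_0}X_0$ is a monomorphism. A monomorphism $g$ in $\C$ induces monomorphisms $g_{\circ}$ on mapping anima, and $\Map_{\C}(W,-)$ preserves pullbacks. Therefore the diagonal of $(f_{0,\dotsc,0})_{\circ}$ is a monomorphism of anima, i.e.\ $(f_{0,\dotsc,0})_{\circ}$ is $0$-truncated, as required.

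No step is a real obstacle. The only care needed is to use that $X$ and $Y$ are complete, so that Corollary \ref{FaithfulOnObjects} applies. $Z$ need only be an $n$-uple Segal object, which Proposition \ref{FaithfulLiftingSegal} permits.
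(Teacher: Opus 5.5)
Your argument is correct. The paper gives no written proof: the corollary is closed with \qed directly after Corollary \ref{FaithfulIsTruncated}, so the intended argument is presumably a one-liner. There, $f$ is a $0$-truncated morphism of $\Seg^n(\C)$. Hence $\Map_{\Seg^n(\C)_{/Y}}(Z,X)$, which is the fiber of $f_{\circ}\colon\Map_{\Seg^n(\C)}(Z,X)\rightarrow\Map_{\Seg^n(\C)}(Z,Y)$ over the structure map, is discrete by the definition of truncatedness.

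You instead work at the level of objects. You embed the mapping anima into $\Map_{\C_{/Y_{0,\dotsc,0}}}(Z_{0,\dotsc,0},X_{0,\dotsc,0})$ via Proposition \ref{FaithfulLiftingSegal}, and you only use that $f_{0,\dotsc,0}$ is $0$-truncated (Corollary \ref{FaithfulOnObjects}). This is the same pattern as the first step of the proof of Theorem \ref{FaithfulLifting}.

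What your route buys: it avoids the step in Corollary \ref{FaithfulIsTruncated} that the paper only calls straightforward, namely that the diagonal $X\rightarrow X\times_YX$ of a faithful map is fully faithful and hence a monomorphism. It also records that the components embed into a discrete anima determined by the objects, which is what gets used later. What the presumed route buys: it is purely formal, and it shows the discreteness simply reflects $f$ itself being $0$-truncated in $\Seg^n(\C)$.
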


Finally, we specialize to the case $\C=\An$. For any tuple $\underline{\varepsilon}=(\varepsilon_1,\dotsc,\varepsilon_n)\in\{0,1\}^n$, we consider it as an object of $\Delta^{\times n}$ and denote the associated representable presheaf by $\bbD^{\underline{\varepsilon}}$. This is an $n$-uple category called the \singlequote{walking $\underline{\varepsilon}$-cell}. Its \textit{dimension} is the number $\abs{\underline{\varepsilon}}\coloneqq\sum_{i=1}^n\varepsilon_n$. The walking $\underline{\varepsilon}$-cell has a skeletal filtration
\begin{equation*}
\begin{tikzcd}
\bigsqcup\limits_{2^{\abs{\underline{\varepsilon}}}}\ast\simeq\partial_{\le0}\bbD^{\underline{\varepsilon}}\arrow[r,hookrightarrow] & \partial_{\le1}\bbD^{\underline{\varepsilon}}\arrow[r,hookrightarrow] & \dotsc\arrow[r,hookrightarrow] & \partial_{\le\abs{\underline{\varepsilon}}-1}\bbD^{\underline{\varepsilon}}\arrow[r,hookrightarrow] & \bbD^{\underline{\varepsilon}}.
\end{tikzcd}
\end{equation*}
The \textit{$k$-skeleton} $\partial_{\le k}\bbD^{\underline{\varepsilon}}$ is the union of the $\le k$-dimensional representable sub-presheaves of $\bbD^{\underline{\varepsilon}}$ and again an $n$-uple category. The codimension $1$-skeleton $\partial\bbD^{\underline{\varepsilon}}\coloneqq\partial_{\le\abs{\underline{\varepsilon}}-1}\bbD^{\underline{\varepsilon}}$ is also called the \textit{boundary} of $\bbD^{\underline{\varepsilon}}$. The crucial property of the skeletal filtration is that the $k$-th step $\partial_{\le k-1}\bbD^{\underline{\varepsilon}}\hookrightarrow\partial_{\le k}\bbD^{\underline{\varepsilon}}$ is the cobase-change of a (finite) coproduct of boundary inclusions $\partial\bbD^{\underline{\varepsilon}^{\prime}}\hookrightarrow\bbD^{\underline{\varepsilon}^{\prime}}$ with $\abs{{\underline{\varepsilon}}^{\prime}}=k$, which is already true in $\PSh(\Delta^{\times n})$.

\begin{proposition}\label{FaithfulCells}
A functor $f\colon\C\rightarrow\D$ of $n$-uple Segal anima is faithful resp. fully faithful if and only if for every $\underline{\varepsilon}\in\{0,1\}^n\setminus\{(0,\dotsc,0)\}$ and $\alpha\colon\partial\bbD^{\underline{\varepsilon}}\rightarrow\C$, the canonical map
\begin{equation*}
\begin{tikzcd}
\Map_{n\Cat^{\uple}}(\bbD^{\underline{\varepsilon}},\C)\times_{\Map_{n\Cat^{\uple}}(\partial\bbD^{\underline{\varepsilon}},\C)}\{\alpha\}\arrow[r] & \Map_{n\Cat^{\uple}}(\bbD^{\underline{\varepsilon}},\D)\times_{\Map_{n\Cat^{\uple}}(\partial\bbD^{\underline{\varepsilon}},\D)}\{f\alpha\}
\end{tikzcd}
\end{equation*}
is a monomorphism resp. equivalence.
\end{proposition}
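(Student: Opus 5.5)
We reduce faithfulness to a statement about maps out of the cells $\bbD^{\underline{\varepsilon}}$. By Definition \ref{FaithfulSegal}, $f$ is faithful resp. fully faithful iff
\begin{equation*}
f^{\prime}\colon\C\rightarrow\D\times_{T\D_{0,\dotsc,0}}T\C_{0,\dotsc,0}\eqqcolon\D^{\prime}
\end{equation*}
is a monomorphism resp. equivalence in $\Seg^n(\An)$. Monomorphisms and equivalences of presheaves on $\Delta^{\times n}$ are detected levelwise. By the Segal conditions, both $\C$ and $\D^{\prime}$ are determined by their values on the objects $\underline{\varepsilon}\in\{0,1\}^n$: every multisimplicial level is an iterated fibre product of such values over lower ones. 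Thus it suffices to show that
\begin{equation*}
\Map(\bbD^{\underline{\varepsilon}},\C)\rightarrow\Map(\bbD^{\underline{\varepsilon}},\D^{\prime})
\end{equation*}
is a monomorphism resp. equivalence for all $\underline{\varepsilon}\in\{0,1\}^n$. For mono this uses that monomorphisms are stable under limits; for equivalences it is immediate. Conversely, levelwise mono/equivalence restricts to these levels.

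Next we compute maps into $\D^{\prime}$. We have $\Map(\bbD^{\underline{\varepsilon}},\D^{\prime})\simeq\Map(\bbD^{\underline{\varepsilon}},\D)\times_{\D_{0,\dotsc,0}^{\times2^{\abs{\underline{\varepsilon}}}}}\C_{0,\dotsc,0}^{\times2^{\abs{\underline{\varepsilon}}}}$ by the formula for $T$ in Proposition \ref{SegAdjunction}. In other words, a map into $\D^{\prime}$ is a cell of $\D$ together with lifts of its vertices to $\C$. For $\underline{\varepsilon}=0$ the map $f^{\prime}$ is an equivalence, so only $\underline{\varepsilon}\neq0$ matters.

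We then argue by induction on $\abs{\underline{\varepsilon}}$ along the skeletal filtration. Suppose we already know that $\Map(\partial_{\le k}\bbD^{\underline{\varepsilon}^{\prime}},\C)\rightarrow\Map(\partial_{\le k}\bbD^{\underline{\varepsilon}^{\prime}},\D^{\prime})$ is mono resp. equivalence for the relevant $k$. Since each step of the skeletal filtration is a cobase change of coproducts of boundary inclusions of cells of dimension $k$, mapping spaces out of it are pullbacks of products of $\Map(\bbD^{\underline{\varepsilon}''},-)$ over $\Map(\partial\bbD^{\underline{\varepsilon}''},-)$. This holds because $\Map_{n\Cat^{\uple}}(-,\C)$ sends colimits in $\PSh(\Delta^{\times n})$ to limits, which is legitimate since $\C$ is a presheaf and maps out of presheaves compute correctly.

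Now consider the commutative square relating $\Map(\bbD^{\underline{\varepsilon}},-)\rightarrow\Map(\partial\bbD^{\underline{\varepsilon}},-)$ for $\C$ and for $\D^{\prime}$. On boundaries the map is mono resp. equivalence by induction. Hence the map on cells is mono resp. equivalence iff it is so on all fibres over points $\alpha\in\Map(\partial\bbD^{\underline{\varepsilon}},\C)$. Both directions of the iff follow from this fibrewise criterion: a map over a monomorphism of bases is mono iff it is fibrewise mono over the image, and similarly for equivalences. For the fibre over $\alpha$, the vertices of $\partial\bbD^{\underline{\varepsilon}}$ are already lifted by $\alpha$. The fibre of $\Map(\bbD^{\underline{\varepsilon}},\D^{\prime})$ over $f^{\prime}\alpha$ is therefore exactly $\Map(\bbD^{\underline{\varepsilon}},\D)\times_{\Map(\partial\bbD^{\underline{\varepsilon}},\D)}\{f\alpha\}$, which is the map in the statement.

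The main obstacle is the bookkeeping in the first step. One must make precise that mono/equivalence on the cells $\bbD^{\underline{\varepsilon}}$ for $\underline{\varepsilon}\in\{0,1\}^n$ suffices, including the mono case where iterated fibre products are taken over non-mono bases. We plan to handle this via the same fibrewise argument applied to the Segal decomposition of $[k_1,\dotsc,k_n]$ as a colimit of cells glued along lower cells.
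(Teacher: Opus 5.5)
Your proposal is correct and follows essentially the paper's argument: reduce to the cells $\bbD^{\underline{\varepsilon}}$ via the Segal condition, then induct on $\abs{\underline{\varepsilon}}$ along the skeletal filtration. Your cell-versus-boundary square with its fibrewise criterion does the same job as the paper's factorization of $\Map(\bbD^{\underline{\varepsilon}},f^{\prime})$ through the skeleta combined with left cancellation. The concern in your last paragraph is not a real obstacle: stability of monomorphisms under limits in the arrow category, which you already invoke, handles fibre products over non-mono bases directly, so no extra fibrewise argument is needed there.
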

\begin{proof}[Proof.]
That $f^{\prime}\colon\C\rightarrow\D\times_{T\D_{0,\dotsc,0}}T\C_{0,\dotsc,0}$ is a monomorphism resp. equivalence can be checked pointwise and the Segal condition reduces this to the points $\underline{\varepsilon}\in\{0,1\}^n$, i.e. to applying $\Map_{n\Cat^{\uple}}(\bbD^{\underline{\varepsilon}},-)$. This is always true for $\underline{\varepsilon}=(0,\dotsc,0)$ and we may use the skeletal filtration to factor $\Map_{n\Cat^{\uple}}(\bbD^{\underline{\varepsilon}},f^{\prime})$ as a composite
\begin{equation*}
\begin{array}{rcl}
\Map_{n\Cat^{\uple}}(\bbD^{\underline{\varepsilon}},\C) & \rightarrow & \Map_{n\Cat^{\uple}}(\bbD^{\underline{\varepsilon}},\D)\times_{\Map_{n\Cat^{\uple}}(\partial_{\le\abs{\underline{\varepsilon}}-1}\bbD^{\underline{\varepsilon}},\D)}\Map_{n\Cat^{\uple}}(\partial_{\le\abs{\underline{\varepsilon}}-1}\bbD^{\underline{\varepsilon}},\C)\\
 & \rightarrow & \Map_{n\Cat^{\uple}}(\bbD^{\underline{\varepsilon}},\D)\times_{\Map_{n\Cat^{\uple}}(\partial_{\le\abs{\underline{\varepsilon}}-2}\bbD^{\underline{\varepsilon}},\D)}\Map_{n\Cat^{\uple}}(\partial_{\le\abs{\underline{\varepsilon}}-2}\bbD^{\underline{\varepsilon}},\C)\\
& & \vdots\\
 & \rightarrow & \Map_{n\Cat^{\uple}}(\bbD^{\underline{\varepsilon}},\D)\times_{\Map_{n\Cat^{\uple}}(\partial_{\le0}\bbD^{\underline{\varepsilon}},\D)}\Map_{n\Cat^{\uple}}(\partial_{\le0}\bbD^{\underline{\varepsilon}},\C)\\
& \simeq & \Map_{n\Cat^{\uple}}(\bbD^{\underline{\varepsilon}},\D)\times_{\Map_{n\Cat^{\uple}}(\bbD^{\underline{\varepsilon}},T\D_{0,\dotsc,0})}\Map_{n\Cat^{\uple}}(\bbD^{\underline{\varepsilon}},T\C_{0,\dotsc,0}).
\end{array}
\end{equation*}
The individual maps are base-changed from (finite) products of the analogous maps
\begin{equation*}
\Map_{n\Cat^{\uple}}(\bbD^{\underline{\varepsilon}^{\prime}},\C)\rightarrow\Map_{n\Cat^{\uple}}(\bbD^{\underline{\varepsilon}^{\prime}},\D)\times_{\Map_{n\Cat^{\uple}}(\partial\bbD^{\underline{\varepsilon}^{\prime}},\D)}\Map_{n\Cat^{\uple}}(\partial\bbD^{\underline{\varepsilon}^{\prime}},\C)
\end{equation*}
for appropriate $\underline{\varepsilon}^{\prime}\in\{0,1\}^n\setminus\{(0,\dotsc,0)\}$. Thus, inducting over $\abs{\underline{\varepsilon}}$ and using left cancellation, we see that $f$ is faithful resp. fully faithful if and only if these maps are monomorphisms resp. equivalences for all $\underline{\varepsilon}^{\prime}\in\{0,1\}^n\setminus\{(0,\dotsc,0)\}$, which can be checked fiberwise over $\Map_{n\Cat^{\uple}}(\partial\bbD^{\underline{\varepsilon}^{\prime}},\C)$.
\end{proof}
\begin{remark}
In particular, a map of $n$-categories is (fully) faithful in our sense if and only if it is (fully) faithful in the usual sense, and a map of anima is (fully) faithful if and only if it is $0$- resp. $(-1)$-truncated.
\end{remark}
\begin{remark}
From this, we deduce by globularity that a (fully) faithful functor of $n$-uple categories induces a (fully) faithful functor on their maximal sub-$n$-categories.
\end{remark}

Inspired by this characterization, we include the following variant.

\begin{definition}
For $0\le k\le n$, a functor $f\colon\C\rightarrow\D$ of $n$-uple Segal anima is \textit{$k$-faithful} if the canonical map
\begin{equation*}
\begin{tikzcd}
\Map_{n\Cat^{\uple}}(\bbD^{\underline{\varepsilon}},\C)\times_{\Map_{n\Cat^{\uple}}(\partial\bbD^{\underline{\varepsilon}},\C)}\{\alpha\}\arrow[r] & \Map_{n\Cat^{\uple}}(\bbD^{\underline{\varepsilon}},\D)\times_{\Map_{n\Cat^{\uple}}(\partial\bbD^{\underline{\varepsilon}},\D)}\{f\alpha\}
\end{tikzcd}
\end{equation*}
is a monomorphism resp. equivalence for every $\underline{\varepsilon}\in\{0,1\}^n\setminus\{(0,\dotsc,0)\}$ such that $\abs{\underline{\varepsilon}}\le k$ resp. $\abs{\underline{\varepsilon}}>k$ and $\alpha\colon\partial\bbD^{\underline{\varepsilon}}\rightarrow\C$. A $1$-faithful functor is also called \textit{locally fully faithful}.
\end{definition}
\begin{remark}
In particular, Proposition \ref{FaithfulCells} yields that $0$- resp. $n$-faithful is the same as (fully) faithful. This notion of a $k$-faithful functor is consistent with \cite[Definition 5.3.1]{Soergel}\footnote{Except that the indexing conventions differ by one.} as well as Definition \ref{LocallyFullyFaithfulDefn}.
\end{remark}

\begin{proposition}\label{CoveringSpaceTheory}
Let $f\colon X\rightarrow Y$ be a faithful map of anima. Then for any map $p\colon Z\rightarrow Y$ of anima, the joint evaluation map
\begin{equation*}
\begin{tikzcd}
\Map_{\An_{/Y}}(Z,X)\arrow[r,hookrightarrow] & \prod_{[z]\in\pi_0(Z)}X_{p(z)}
\end{tikzcd}
\end{equation*}
is an inclusion of discrete anima. The image consists precisely of those families $(x_z)_{[z]\in\pi_0(Z)}$ such that for any $\alpha\colon z\rightarrow z^{\prime}$ in $Z$, the image $p(\alpha)$ is contained in the full sub-anima $\Map_X(x_z,x_{z^{\prime}})\hookrightarrow\Map_Y(p(z),p(z^{\prime}))$.
\end{proposition}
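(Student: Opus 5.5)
We reduce to the case $n=0$ of Proposition \ref{FaithfulLiftingSegal} and then compute the resulting condition explicitly. A faithful map of anima $f\colon X\rightarrow Y$ is $0$-truncated, by the preceding remark or by Corollary \ref{FaithfulIsTruncated} with $n=0$. Hence each fiber $X_y$ is discrete. Then $\Map_{\An_{/Y}}(Z,X)$ is the anima of sections of the pulled-back map $Z\times_YX\rightarrow Z$, which is again $0$-truncated. So this mapping anima is discrete, being a limit over $Z$ of discrete fibers. By decomposing $Z=\bigsqcup_{[z]\in\pi_0(Z)}Z_{[z]}$ into path components, we get
\begin{equation*}
\Map_{\An_{/Y}}(Z,X)\simeq\prod_{[z]\in\pi_0(Z)}\Map_{\An_{/Y}}(Z_{[z]},X),
\end{equation*}
and we may therefore assume that $Z$ is connected, with a chosen point $z$.

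For connected $Z$, we evaluate at $z$ to get a map $\Map_{\An_{/Y}}(Z,X)\rightarrow X_{p(z)}$. We claim it is a monomorphism. Indeed, the space of sections of the covering $Z\times_YX\rightarrow Z$ over a connected base embeds into the fiber over $z$: a section is determined by its value at one point, since the covering is $0$-truncated and the base is connected. Formally, we apply Proposition \ref{FaithfulLiftingSegal} with $n=0$ (so $T$ is the identity and $f^{\prime}=f$), which gives the monomorphism into $\Map_{\An_{/Y}}(\pi_0\text{-data})$. Alternatively we argue via the fibration $X\rightarrow Y$ with discrete fibers, using the equivalence $\An_{/Y}\simeq\Fun(Y,\An)$. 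Under this equivalence, $X$ corresponds to a functor $Y\rightarrow\Set$ and $Z$ to $y\mapsto Z_y$. A map over $Y$ is then a natural transformation $Z_{(-)}\Rightarrow X_{(-)}$ into a set-valued functor. Such transformations factor through $\pi_0$, so they amount to a $\pi_1(Z,z)$-equivariant choice of a point of $X_{p(z)}$, where $\pi_1(Z,z)$ acts through $p$.

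It remains to identify the image. A point $x_z\in X_{p(z)}$ extends to a section exactly when it is fixed by the monodromy action of every loop in $Z$. More generally, for connected $Z$ the compatible families are those where, for every path $\alpha\colon z\rightarrow z^{\prime}$, the transport along $p(\alpha)$ sends $x_z$ to $x_{z^{\prime}}$. In the anima $X$, the space $\Map_X(x_z,x_{z^{\prime}})$ includes into $\Map_Y(p(z),p(z^{\prime}))$ as the union of those components of paths whose transport sends $x_z$ to $x_{z^{\prime}}$; this inclusion is full because $f$ is $0$-truncated. So the condition that $p(\alpha)$ lies in this subanima is precisely the monodromy condition. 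In the disconnected case we impose it for all pairs $z,z^{\prime}$: when $z,z^{\prime}$ lie in different components there are no paths and the condition is vacuous, and within one component it forces the chosen lifts to be compatible. Since the chosen lifts are indexed by $\pi_0(Z)$, the condition for $\alpha$ between different representatives of the same component reduces to loops at a fixed representative.

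The main obstacle is rigorously identifying the image, that is, checking that satisfying the path condition for all $\alpha$ actually produces a map of anima over $Y$, and not merely a pointwise assignment. I would handle this using straightening. A map $Z\rightarrow X$ over $Y$ is a section of $Z\times_YX\rightarrow Z$, which is a $0$-truncated map over the $\infty$-groupoid $Z$. Such a map is classified by a functor $Z\rightarrow\Set$, which factors through the fundamental groupoid $\tau_{\le1}Z$. Sections of such a functor are computed by the $1$-categorical limit over $\tau_{\le1}Z$, namely compatible families of elements indexed by objects and respected by all morphisms. These are exactly the families described in the statement.
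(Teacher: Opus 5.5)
Your argument is correct, but it follows a different route from the paper's. The paper never straightens. It notes that the square formed by $X\to\tau_{\le1}X$ over $Y\to\tau_{\le1}Y$ is cartesian, by comparing vertical fibers, which are discrete for $f$. It deduces $\Map_{\An_{/Y}}(Z,X)\simeq\Map_{\An_{/\tau_{\le1}Y}}(Z,\tau_{\le1}X)\simeq\Map_{\An_{/\tau_{\le1}Y}}(\tau_{\le1}Z,\tau_{\le1}X)$. The remaining statement about ordinary groupoids, i.e.\ the classical lifting criterion for groupoid coverings, is then left to be checked by hand.

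You instead pull $f$ back along $p$ and straighten the $0$-truncated map $Z\times_YX\to Z$ to a functor $Z\to\Set$. You then use $\Fun(Z,\Set)\simeq\Fun(\tau_{\le1}Z,\Set)$ and compute sections as the $1$-categorical limit over $\tau_{\le1}Z$. Your observation that $\Map_X(x_z,x_{z'})\hookrightarrow\Map_Y(p(z),p(z'))$ is exactly the union of components of paths whose transport carries $x_z$ to $x_{z'}$ is correct. Combined with the limit computation, it gives discreteness, injectivity and the image in one stroke.

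So your version truncates only the source and actually carries out the ``by hand'' step. The paper's version is shorter and reduces all three anima to $1$-groupoids at once. It avoids both straightening and the comparison $\lim_Z\simeq\lim_{\tau_{\le1}Z}$ for set-valued diagrams. Your closing straightening paragraph already subsumes the connected-case injectivity claim, so the middle of the proposal could be trimmed.

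One sentence should be deleted: the ``formal'' appeal to Proposition \ref{FaithfulLiftingSegal} with $n=0$. For $n=0$ one has $T=\id$, and $f'\colon X\to Y\times_YX\simeq X$ is an equivalence, not $f$. Every map of anima would then be fully faithful in that sense, and the proposition says nothing. Likewise, Corollary \ref{FaithfulIsTruncated} with $n=0$ would make every map of anima a monomorphism. In the statement, ``faithful map of anima'' means faithful as a functor of groupoids, i.e.\ anima viewed as complete Segal anima with $n\ge1$. This is $0$-truncatedness, as recorded in the remark after Proposition \ref{FaithfulCells}, which you also cite. Your ``Alternatively'' argument and the final straightening argument do not use the faulty sentence, so the proof stands.
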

\begin{proof}[Proof.]
There is a cartesian square
\begin{equation*}
\begin{tikzcd}
X\arrow[r]\arrow[d,"f"']\pb & \tau_{\le1}X\arrow[d,"\tau_{\le1}f"']\\
Y\arrow[r] & \tau_{\le1}Y,
\end{tikzcd}
\end{equation*}
immediate by comparing the vertical fibers, which are $0$-truncated in the case of $f$ by assumption. This begets
\begin{equation*}
\Map_{\An_{/Y}}(Z,X)\simeq\Map_{\An_{/\tau_{\le1}Y}}(Z,\tau_{\le_1}X)\simeq\Map_{\An_{/\tau_{\le1}Y}}(\tau_{\le1}Z,\tau_{\le1}X).
\end{equation*}
However, $1$-truncated anima identify with ordinary groupoids, so this claim can be verified by hand.
\end{proof}
\begin{remark}
There is another perspective on this result: if we model anima by CW-complexes, then faithful maps can be modeled by covering spaces and the above is precisely the usual lifting criterion in covering space theory (see e.g. \cite[Theorem 16.1]{Hu}).
\end{remark}
\begin{remark}
In fact, the same argument works with categories instead of anima, replacing the $1$-truncation $\tau_{\le1}X$ with the homotopy category $h\C$. However, we also recover this case below.
\end{remark}

To ultimately prove Theorem \ref{FaithfulLiftingIntro}, we recall an easy lemma about monomorphisms.

\begin{lemma}[\protect{\cite[Proposition A.11]{Ramzi}}]\label{MonosInFun}
Let $\C$ be a finitely complete category and $\I$ a small category. The monomorphisms and the property of factoring through a monomorphism in $\Fun(\I,\C)$ are checked pointwise.\qed
\end{lemma}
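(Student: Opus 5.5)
We plan to reduce both claims to statements about finite limits and equivalences, since both are computed pointwise in $\Fun(\I,\C)$. Because $\C$ is finitely complete, finite limits in $\Fun(\I,\C)$ exist and are computed pointwise: for every $i\in\I$, evaluation $\ev_i\colon\Fun(\I,\C)\rightarrow\C$ preserves them. Moreover, a natural transformation is an equivalence if and only if each of its components is.

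For monomorphisms we use the standard characterization: a morphism $f\colon X\rightarrow Y$ in a finitely complete category is a monomorphism if and only if the diagonal $\Delta_f\colon X\rightarrow X\times_YX$ is an equivalence. This holds because $f$ is a monomorphism exactly when $f_{\circ}\colon\Map(T,X)\rightarrow\Map(T,Y)$ is $(-1)$-truncated for all $T$, i.e.\ when the diagonals of these maps of anima are equivalences. Applying this in $\Fun(\I,\C)$, the pullback $X\times_YX$ is formed pointwise, so $(\Delta_f)_i\simeq\Delta_{f_i}$. Hence $\Delta_f$ is an equivalence if and only if every $\Delta_{f_i}$ is, which says that $f$ is a monomorphism if and only if every $f_i$ is.

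For the factorization property, fix a monomorphism $m\colon Z\hookrightarrow Y$ and a map $g\colon W\rightarrow Y$ in $\Fun(\I,\C)$. By the first part, each $m_i$ is also a monomorphism. Consider the base change $m^{\prime}\colon W\times_YZ\rightarrow W$, which is again a monomorphism. The space of factorizations of $g$ through $m$ is $\Map_{/Y}(W,Z)\simeq\Map_{/W}(W,W\times_YZ)$, the space of sections of $m^{\prime}$. Since $m^{\prime}$ is a monomorphism, this space is either empty or contractible, and it is nonempty exactly when $m^{\prime}$ is an equivalence: a monomorphism admitting a section is an equivalence, because the section is then inverse to it by the mono property. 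The same reasoning applies in $\C$ to each component $g_i$ and $m_i$. As the pullback is formed pointwise, $(m^{\prime})_i\simeq m_i^{\prime}$, and equivalences are detected pointwise. Therefore $g$ factors through $m$ if and only if each $g_i$ factors through $m_i$.

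No step is a serious obstacle. The only point needing care is the identification ``admits a section $\Leftrightarrow$ is an equivalence'' for monomorphisms, which we reduce to mapping anima, where it is the statement that a $(-1)$-truncated map of anima with a section is an equivalence.
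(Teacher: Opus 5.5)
Your proof is correct. The paper gives no argument of its own for this lemma; it simply quotes \cite[Proposition A.11]{Ramzi}. So there is no internal proof to match, and what you have written is a complete, self-contained replacement.

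Your two reductions convert both assertions into statements about pullbacks and equivalences, which are computed resp.\ detected pointwise in $\Fun(\I,\C)$:
\begin{itemize}
\item $f$ is a monomorphism $\Leftrightarrow$ the diagonal $X\to X\times_YX$ is an equivalence;
\item $g$ factors through the monomorphism $m$ $\Leftrightarrow$ the base change $W\times_YZ\to W$ is an equivalence.
\end{itemize}

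The second reduction is the more valuable point. Proving ``factors pointwise $\Rightarrow$ factors'' directly would require assembling the pointwise factorizations $W_i\to Z_i$ coherently into a natural transformation. You avoid this entirely: since $W\times_YZ\to W$ is pointwise invertible, it is invertible in $\Fun(\I,\C)$. Its inverse, composed with the projection to $Z$, then gives the global factorization for free.

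Your argument uses exactly the hypothesis of the lemma, and in fact only that $\C$ has pullbacks. Citing Ramzi buys brevity. For comparison, the implication ``pointwise mono $\Rightarrow$ mono'' holds with no limits in $\C$ at all, by passing through the Yoneda embedding into presheaves. That generality is not needed here, and your route is the natural one under the stated finite-completeness assumption.
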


\begin{theorem}\label{FaithfulLifting}
Let $f\colon\C\rightarrow\D$ be a faithful functor of $n$-uple categories. Then, for any functor $p\colon\T\rightarrow\D$ of $n$-uple categories, the joint evaluation map
\begin{equation*}
\Map_{n\Cat^{\uple}_{/\D}}(\T,\C)\hookrightarrow\prod_{[t]\in\pi_0(\T^{\simeq})}(\C_{p(t)})^{\simeq}
\end{equation*}
is an inclusion of discrete anima. The image consists precisely of those families $(c_t)_{[t]\in\pi_0(\T^{\simeq})}$ satisfying:
\begin{itemize}
\item For every $\underline{\varepsilon}\in\{0,1\}^n\setminus\{(0,\dotsc,0)\}$ and $\underline{\varepsilon}$-cell $\alpha\colon\bbD^{\underline{\varepsilon}}\rightarrow\T$, its image $f\alpha$ together with the chosen lifts of objects is contained in the full sub-anima $\Map_{n\Cat^{\uple}}(\bbD^{\underline{\varepsilon}},\C)\hookrightarrow\Map_{n\Cat^{\uple}}(\bbD^{\underline{\varepsilon}},\D)\times_{\D^{\simeq,\times2^n}}\C^{\simeq,\times2^n}$.
\end{itemize}
Furthermore, if $1\le k\le n$ and $f\colon\C\rightarrow\D$ is $k$-faithful, it suffices to check the above condition for those $\underline{\varepsilon}\in\{0,1\}^n\setminus\{(0,\dotsc,0)\}$ such that $\abs{\underline{\varepsilon}}=k$. If $\C$ and $\D$ are $(\infty,n)$-categories, it suffices to check the above condition for $\underline{\varepsilon}$ of the form $(1,\dotsc,1,0,\dotsc,0)$.
\end{theorem}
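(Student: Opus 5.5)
We reduce to Proposition \ref{FaithfulLiftingSegal} with $\C=\An$ and $Z=\T$. It gives a monomorphism
\begin{equation*}
\Map_{n\Cat^{\uple}_{/\D}}(\T,\C)\hookrightarrow\Map_{\An_{/\D^{\simeq}}}(\T^{\simeq},\C^{\simeq}).
\end{equation*}
Here $\T^{\simeq}=\T_{0,\dotsc,0}$. A component lies in the image exactly when the induced map $\T\rightarrow\D\times_{T\D^{\simeq}}T\C^{\simeq}$ factors through the monomorphism $f^{\prime}$.

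By Corollary \ref{FaithfulOnObjects}, $f^{\simeq}\colon\C^{\simeq}\rightarrow\D^{\simeq}$ is $0$-truncated, i.e.\ faithful as a map of anima. Proposition \ref{CoveringSpaceTheory} then identifies the target with a subset of $\prod_{[t]}(\C_{p(t)})^{\simeq}$. That subset consists of the families compatible with all paths in $\T^{\simeq}$, so everything in sight is discrete. The composite of the two inclusions is the joint evaluation map in the statement. This proves the first claim, with the image cut out by the factorization condition together with the path condition.

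Next I would rewrite the factorization condition. By Lemma \ref{MonosInFun}, factoring through the monomorphism $f^{\prime}$ in $\Fun(\Delta^{\times n,\op},\An)$ can be checked levelwise. Using the Segal condition it reduces to the levels $\underline{\varepsilon}\in\{0,1\}^n$, i.e.\ to $\Map(\bbD^{\underline{\varepsilon}},-)$. So it says that for each $\underline{\varepsilon}$-cell $\alpha$ of $\T$, the pair ($f\alpha$, lifted vertices) lands in $\Map(\bbD^{\underline{\varepsilon}},\C)$. The lifted vertices are read off from $\T^{\simeq}\rightarrow\C^{\simeq}$, which is determined by the family $(c_t)$. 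For $\underline{\varepsilon}=0$ there is nothing to check.

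The path compatibility condition from Proposition \ref{CoveringSpaceTheory} is subsumed by the cell conditions. By completeness, an equivalence in $\T^{\simeq}$ is a $1$-cell in some direction, namely a point of $\T_{\eq}$ in the sense of Remark \ref{CompletenessCharacterization}. Its lift to $\C$ is a cell that is again an equivalence, because faithfulness on $\C_{\eq}\hookrightarrow\C_1$ gives this. Hence the path condition is implied by the condition for $\underline{\varepsilon}$ of dimension one. I expect checking this coherence carefully to be the fiddliest point.

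For the refinement to a $k$-faithful $f$, I would induct along the skeletal filtration. Take a cell with $\abs{\underline{\varepsilon}}>k$. The relevant fiber map over its boundary is an equivalence, so once the boundary lifts, the cell lifts; and the boundary is built from lower cells. For cells with $\abs{\underline{\varepsilon}}<k$, the fiber map is only a monomorphism. Here one realizes a lower cell $\beta$ as a face of a degenerate $k$-cell $s\beta$, which is possible since $k\le n$ lets us degenerate in unused directions. If the lift condition holds for $s\beta$, restricting to the face gives it for $\beta$. The objects involved are the same, since degeneracy only repeats vertices.

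For $(\infty,n)$-categories, globularity forces any $\underline{\varepsilon}$-cell to factor through a cell whose nonzero entries come first. The nonzero entries in later slots correspond to degenerate directions, since the relevant simplicial objects are constant. So condition checking for $(1,\dotsc,1,0,\dotsc,0)$ suffices.

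The main obstacle is the interplay between the path condition and the cell conditions. The skeletal-filtration bookkeeping for $k$-faithfulness is routine given Proposition \ref{FaithfulCells}.
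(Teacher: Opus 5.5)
Your proposal is correct and follows the paper's proof essentially step for step: you factor through $\Map_{\An_{/\D^{\simeq}}}(\T^{\simeq},\C^{\simeq})$ using Proposition \ref{FaithfulLiftingSegal}, Corollary \ref{FaithfulOnObjects} and Proposition \ref{CoveringSpaceTheory}, reduce levelwise and via the Segal condition to cells, and handle $k$-faithful $f$ by the skeletal filtration plus a retract argument (with globularity for the last claim). The one step to phrase more carefully is the path condition, since faithfulness alone does not make the lift of an invertible $1$-cell invertible; as the paper does, also lift the inverse via the $(1,0,\dotsc,0)$-condition, and then use faithfulness to identify both lifted composites with identities.
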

\begin{proof}[Proof.]
This map taking fibers is obtained as a composite
\begin{equation*}
\begin{tikzcd}
\Map_{n\Cat^{\uple}_{/\D}}(\T,\C)\arrow[r,hookrightarrow,"(-)^{\simeq}"] & \Map_{\An_{/\D^{\simeq}}}(\T^{\simeq},\C^{\simeq})\arrow[r,hookrightarrow] & \prod_{[t]\in\pi_0(\T^{\simeq})}(\C_{p(t)})^{\simeq}.
\end{tikzcd}
\end{equation*}
The former is a monomorphism  by Proposition \ref{FaithfulLiftingSegal}. Furthermore, the map $f^{\simeq}\colon\C^{\simeq}\rightarrow\D^{\simeq}$ is faithful, i.e. $0$-truncated, by Corollary \ref{FaithfulOnObjects} and so the latter joint evaluation map is a monomorphism by Proposition \ref{CoveringSpaceTheory}.

To characterize the essential image, note that the condition is clearly necessary. Suppose conversely that $(c_t)_{[t]\in\pi_0(\T^{\simeq})}$ is a tuple satisfying the condition. The tuple is contained in the essential image of the second map; indeed, a morphism $t\rightarrow t^{\prime}$ in $\T^{\simeq}$ is equivalently an invertible $1$-morphism (in, say, the first direction) $t\rightarrow t^{\prime}$ in $\T$ by completeness, whose image under $p$ lifts to an invertible morphism $c_t\rightarrow c_{t^{\prime}}$ in $\C$ by the assumption for $\underline{\varepsilon}=(1,0,\dotsc,0)$ (and since the inverse also lifts), which equivalently describes a lift $c_t\rightarrow c_{t^{\prime}}$ in $\C^{\simeq}$. Thus we obtain an induced map $\T^{\simeq}\rightarrow\C^{\simeq},\,t\mapsto c_t$ over $\D^{\simeq}$ by Proposition \ref{CoveringSpaceTheory}.

To check that this is contained in the essential image of the first map, we use the characterization of Proposition \ref{FaithfulLiftingSegal} and Lemma \ref{MonosInFun}. The Segal condition reduces us to verifying the pointwise factorization at the objects $\underline{\varepsilon}\in\{0,1\}^n$, i.e. we need to check the existence of the dotted maps
\begin{equation*}
\begin{tikzcd}
& \Map_{n\Cat^{\uple}}(\bbD^{\underline{\varepsilon}},\C)\arrow[d,hookrightarrow]\\
\Map_{n\Cat^{\uple}}(\bbD^{\underline{\varepsilon}},\T)\arrow[ur,dotted]\arrow[r] & \Map_{n\Cat^{\uple}}(\bbD^{\underline{\varepsilon}},\D)\times_{\D^{\simeq,\times2^{\abs{\underline{\varepsilon}}}}}\C^{\simeq,\times2^{\abs{\underline{\varepsilon}}}}.
\end{tikzcd}
\end{equation*}
The right vertical map is a monomorphism by faithfulness (and an equivalence if $\underline{\varepsilon}=(0,\dotsc,0)$), so this can be checked on $\pi_0$, where it is precisely the assumption. To verify the final assertions, we induct over the skeletal filtrations of the $\bbD^{\underline{\varepsilon}}$ (cf. the proof of Proposition \ref{FaithfulCells}) to see that it suffices to check the condition for those $\underline{\varepsilon}\in\{0,1\}^n\setminus\{(0,\dotsc,0)\}$ such that $\abs{\underline{\varepsilon}}\le k$ in the case that $f$ is $k$-faithful. Then a retract argument reduces us to the case $\abs{\underline{\varepsilon}}=k$. The other assertion is immediate by globularity.
\end{proof}

\section{Constructing Ambifibrations}\label{AmbifibSection}

In this section, we carry out our main construction of factorization systems, proving Theorem \ref{FactorizationSystemIntro}.

\begin{notation}
The datum of a category $\C$ equipped with a factorization system is denoted $\C^{\dagger}$.
\end{notation}

\begin{definition}\label{AmbiFibDefn}
A functor $p\colon\E^{\dagger}\rightarrow\C^{\dagger}$ in $\Fact$ is called an \textit{ambifibration} if it admits cocartesian resp. cartesian lifts of egressive resp. ingressive morphisms, which are again egressive resp. ingressive. It is called a \textit{curved orthofibration} if furthermore every ingressive morphism in $\E^{\dagger}$ is $p$-cartesian.
\end{definition}

\begin{proposition}[\protect{\cite[Observation 5.2, (2)]{HHNL}}]\label{CocartesianFact}
Let $p\colon\E^{\dagger}\rightarrow\C^{\dagger}$ be a functor in $\Fact$ and $p_{\ing}\colon\E_{\ing}\rightarrow\C_{\ing}$ the induced functor on the wide subcategories of ingressive morphisms. Then an ingressive morphism $f\colon x\rightarrowtail y$ in $\E_{\ing}\hookrightarrow\E$ is $p$-cartesian if and only if it is $p_{\ing}$-cartesian.
\end{proposition}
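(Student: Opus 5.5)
We have to prove that an ingressive $f\colon x\rightarrowtail y$ in $\E$ is $p$-cartesian if and only if it is $p_{\ing}$-cartesian. We use the mapping-anima characterization of cartesian morphisms. For every $t\in\E$, $f$ is $p$-cartesian if and only if the square
\begin{equation*}
\begin{tikzcd}
\Map_{\E}(t,x)\arrow[r,"f_{\circ}"]\arrow[d] & \Map_{\E}(t,y)\arrow[d]\\
\Map_{\C}(pt,px)\arrow[r,"pf_{\circ}"] & \Map_{\C}(pt,py)
\end{tikzcd}
\end{equation*}
is cartesian. The $p_{\ing}$-cartesian condition is the same square with $\E,\C$ replaced by $\E_{\ing},\C_{\ing}$. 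Since $\E_{\ing}\hookrightarrow\E$ and $\C_{\ing}\hookrightarrow\C$ are wide monomorphisms, the $\E_{\ing}$-square is the restriction of the $\E$-square to the components of ingressive morphisms.

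For the direction ($\Rightarrow$), the key point is that ingressive morphisms satisfy right cancellation: if $fg$ and $f$ are ingressive, then $g$ is ingressive. This follows from the orthogonality characterization recalled after (\ref{Orthogonality}).

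Now let $f$ be $p$-cartesian. Take an ingressive $h\colon t\rightarrowtail y$ together with an ingressive $u\colon pt\rightarrowtail px$ satisfying $pf\circ u\simeq ph$. Cartesianness in $\E$ gives a unique $g\colon t\rightarrow x$ with $fg\simeq h$ and $pg\simeq u$. Right cancellation shows that $g$ is ingressive. Hence the restricted fiber map is still an equivalence, because being a monomorphism on components is automatic. So the restricted square is cartesian.

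For the direction ($\Leftarrow$), assume $f$ is $p_{\ing}$-cartesian. Take an arbitrary $h\colon t\rightarrow y$ in $\E$ and $u\colon pt\rightarrow px$ with $pf\circ u\simeq ph$. We must show that the space of lifts $g$ with $fg\simeq h$ and $pg\simeq u$ is contractible. To do this, factor $h$ as $t\stackrel{e}{\twoheadrightarrow}m\stackrel{i}{\rightarrowtail}y$. Since $p$ preserves the factorization system, $ph\simeq pi\circ pe$ is the egressive-ingressive factorization of $ph$. Factor $u\simeq u_i\circ u_e$ as well, with $u_e\colon pt\twoheadrightarrow n$ and $u_i\colon n\rightarrowtail px$. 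Then $pf\circ u\simeq(pf\circ u_i)\circ u_e$ is another such factorization of $ph$, because ingressives are closed under composition. By uniqueness of factorizations, $n\simeq pm$ canonically, $u_e\simeq pe$, and $pf\circ u_i\simeq pi$.

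The $p_{\ing}$-cartesian property applied to $i$ and $u_i$ produces a unique ingressive $g'\colon m\rightarrowtail x$ with $fg'\simeq i$ and $pg'\simeq u_i$. Then $g\coloneqq g'e$ is a lift.

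The main obstacle is contractibility, as opposed to mere existence. For this, use functoriality of factorizations: the section $\mathrm{fact}\colon\Fun([1],\E)\hookrightarrow\Fun([2],\E)$ is fully faithful, and it is compatible with $p$. Any lift $g$ factors as $g\simeq g_i g_e$. Then $f g_i$ is ingressive, so $(g_e, fg_i)$ is the factorization of $h$. The space of lifts $g$ therefore identifies with the space of data consisting of
\begin{itemize}
\item an identification of $(g_e,fg_i)$ with $(e,i)$, which is contractible by uniqueness of factorizations, together with
\item an ingressive lift $g_i$ of $u_i$ through $f$ over $i$, which is contractible by $p_{\ing}$-cartesianness.
\end{itemize}
Making this identification of fibers coherent is the step I expect to need care. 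I would phrase it as an equivalence of fibers of the maps $\Map_{\E}(t,x)\rightarrow\Map_{\E}(t,y)\times_{\Map_{\C}(pt,py)}\Map_{\C}(pt,px)$, computed via the equivalence $\Map_{\E}(t,z)\simeq\colim_{m}\Map_{\E_{\eg}}(t,m)\times\Map_{\E_{\ing}}(m,z)$ given by factorization, with the colimit taken over the groupoid of factorizations. This equivalence is natural in $z$ and compatible with $p$. The map in question is then a colimit of maps that are equivalences, because $\E_{\ing}$-cartesianness gives equivalences on the second factor and the base is pulled back accordingly.
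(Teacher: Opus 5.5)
Your proposal is correct. The forward direction is the same as the paper's: cancellation for ingressive morphisms makes the squares comparing $\Map_{\E_{\ing}}(w',-)\hookrightarrow\Map_{\E}(w',-)$ under $f_{\circ}$ cartesian. The paper then pastes pullbacks with $g=\id_w$, which amounts to your restriction to components.

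For the converse you take a different route.

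\textbf{The paper's argument.} It works one point at a time. Given an arbitrary $g\colon w\to y$, it factors $g\simeq h\circ g'$ with $g'$ egressive and $h$ ingressive. It then pastes the orthogonality squares ($g'^{\circ}$ against $f_{\circ}$ in $\E$, and $p(g')^{\circ}$ against $p(f)_{\circ}$ in $\C$) with the cancellation squares. This identifies the fiber over $g$ of the $p$-cartesianness comparison at $w$ with the fiber over $h$ of the $p_{\ing}$-cartesianness comparison at $w'$. These identifications come for free from pullback pasting, and checking an equivalence fiber by fiber is legitimate, so the coherence issue you flag never arises. In effect this is a rigorous form of your pointwise ``existence plus uniqueness'' sketch.

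\textbf{Your argument.} You globalize instead. You write $\Map_{\E}(t,z)$ as the total space over $\E^{\simeq}$ of $m\mapsto\Map_{\E_{\eg}}(t,m)\times\Map_{\E_{\ing}}(m,z)$, using the equivalence $d_1$ from egressive–ingressive composable pairs to $\Fun([1],\E)$. This decomposition is compatible with $f_{\circ}$ and with $p$. By universality of colimits in $\An$, the target $\Map_{\E}(t,y)\times_{\Map_{\C}(pt,py)}\Map_{\C}(pt,px)$ is then a total space over $\E^{\simeq}$ with fiber
\begin{equation*}
\Map_{\E_{\eg}}(t,m)\times\bigl(\Map_{\E_{\ing}}(m,y)\times_{\Map_{\C_{\ing}}(pm,py)}\Map_{\C_{\ing}}(pm,px)\bigr),
\end{equation*}
since the egressive factors coming from $\C$ cancel. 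The comparison map is then fiberwise $\id\times{}$(the $p_{\ing}$-comparison at $m$), hence an equivalence.

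Two small precisions: the decomposition is natural in $z$ only along ingressive maps, which suffices because $f$ is ingressive. Also, the colimit is indexed by the anima $\E^{\simeq}$ of middle objects, not literally by ``the groupoid of factorizations''.

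\textbf{Comparison.} Your route uses uniqueness of factorizations where the paper uses orthogonality; these are equivalent inputs. It costs setting up the colimit formula and its naturality, but in return it gives a global identification of the comparison map. The paper's route is shorter and needs only one orthogonality square per test morphism.
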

\begin{proof}[Proof.]
For any egressive morphism $g\colon w\twoheadrightarrow w^{\prime}$ in $\E^{\dagger}$, consider the diagram
\begin{equation*}
\begin{tikzcd}[column sep=0.8em]
& \Map_{\C_{\ing}}(pw^{\prime},px)\arrow[rr,hookrightarrow,color=teal]\arrow[dd,"p(f)_{\circ}",color=teal,near end] & & \Map_{\C}(pw^{\prime},px)\arrow[rr,"p(g)^{\circ}",near start,color=orange]\arrow[dd,"p(f)_{\circ}",rightharpoonup,color=orange,near end,shift left=0.02em]\arrow[dd,rightharpoondown,color=teal,shift right=0.02em] & & \Map_{\C}(pw,px)\arrow[dd,"p(f)_{\circ}",near end,color=orange]\\
\Map_{\E_{\ing}}(w^{\prime},x)\arrow[rr,hookrightarrow,crossing over,color=teal]\arrow[dd,"f_{\circ}",color=teal,near end]\arrow[ur,"p_{\ing}"] & & \Map_{\E}(w^{\prime},x)\arrow[rr,"g^{\circ}",crossing over,near start,color=orange]\arrow[ur,"p"] & & \Map_{\E}(w,x)\arrow[ur,"p"] & \\
& \Map_{\C_{\ing}}(pw^{\prime},py)\arrow[rr,hookrightarrow,color=teal] & & \Map_{\C}(pw^{\prime},py)\arrow[rr,"p(g)^{\circ}",near start,color=orange] & & \Map_{\C}(pw,py).\\
\Map_{\E_{\ing}}(w^{\prime},y)\arrow[rr,hookrightarrow,color=teal,crossing over]\arrow[ur,"p_{\ing}"] & & \Map_{\E}(w^{\prime},y)\arrow[rr,"g^{\circ}",near start,crossing over,color=orange]\arrow[from=uu,"f_{\circ}",rightharpoonup,color=orange,near end,shift left=0.02em]\arrow[from=uu,rightharpoondown,color=teal,shift right=0.02em]\arrow[ur,"p"] & & \Map_{\E}(w,y)\arrow[from=uu,"f_{\circ}",near end,crossing over,color=orange]\arrow[ur,"p"] & \\
\end{tikzcd}
\end{equation*}
The squares consisting of the teal-colored arrows are cartesian, because $f$ and hence $p(f)$ are ingressive and ingressive morphisms satisfy stability under composition and left cancellation. The squares consisting of the orange-colored arrows are cartesian by orthogonality, because $g$ and hence $p(g)$ are egressive. Thus, letting $w\in\E$ be arbitrary and $g=\id_w$, pullback-pasting immediately yields that $f$ is $p_{\ing}$-cartesian if it is $p$-cartesian.

In the converse direction, assume that $f$ is $p_{\ing}$-cartesian and let $g\colon w\rightarrow y$ be an arbitrary morphism in $\E$. Factoring $g$ as an egressive morphism $g^{\prime}\colon w\twoheadrightarrow w^{\prime}$ followed by an ingressive morphism $h\colon w^{\prime}\rightarrowtail y$, we have that $f$ is the image of $h\in\Map_{\E_{\ing}}(w^{\prime},y)$ under $g^{\prime,\circ}$. The left face of the above cuboid is cartesian by assumption, hence the right face of the cuboid also induces an equivalence on vertical fibers over $g$. Since $g$ was arbitrary, this implies the right face of the cuboid is cartesian, i.e. $f$ is $p$-cartesian.
\end{proof}

\begin{corollary}
Let $p\colon\E^{\dagger}\rightarrow\C^{\dagger}$ be a functor in $\Fact$. Then $p$ is an ambifibration resp. curved orthofibration if and only if $p_{\eg}\colon\E_{\eg}\rightarrow\C_{\eg}$ is a cocartesian fibration and $p_{\ing}\colon\E_{\ing}\rightarrow\C_{\ing}$ is a cartesian resp. right fibration.\qed
\end{corollary}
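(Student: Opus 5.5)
We prove the corollary by unwinding Definition \ref{AmbiFibDefn} through Proposition \ref{CocartesianFact} and its dual, treating the two classes of morphisms separately. Applying Proposition \ref{CocartesianFact} to $p^{\op}\colon(\E^{\dagger})^{\op}\rightarrow(\C^{\dagger})^{\op}$ (where egressive and ingressive morphisms swap roles) gives the dual statement: an egressive morphism of $\E^{\dagger}$ is $p$-cocartesian if and only if it is $p_{\eg}$-cocartesian. The corollary then reduces to translating ``cocartesian lifts of egressive morphisms which are egressive'' into ``$p_{\eg}$ is a cocartesian fibration'', and likewise on the ingressive side.

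\textbf{Forward direction.} Assume $p$ is an ambifibration. For an egressive $g\colon pe\twoheadrightarrow c$ in $\C$, pick a $p$-cocartesian egressive lift $\tilde g$. Then $\tilde g$ lies in $\E_{\eg}$ and lifts $g$ under $p_{\eg}$, and by the dual of Proposition \ref{CocartesianFact} it is $p_{\eg}$-cocartesian. Hence $p_{\eg}$ has all cocartesian lifts, so it is a cocartesian fibration. Symmetrically, $p_{\ing}$ is a cartesian fibration by Proposition \ref{CocartesianFact} itself.

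\textbf{Converse.} Assume $p_{\eg}$ is a cocartesian fibration and $p_{\ing}$ a cartesian fibration. The $p_{\eg}$-cocartesian lift of an egressive morphism is egressive by construction and $p$-cocartesian by the dual of Proposition \ref{CocartesianFact}. Dually, $p_{\ing}$-cartesian lifts are ingressive and $p$-cartesian. So $p$ is an ambifibration.

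\textbf{Curved orthofibrations.} The additional condition is that every ingressive morphism of $\E^{\dagger}$ is $p$-cartesian. By Proposition \ref{CocartesianFact} this is equivalent to every morphism of $\E_{\ing}$ being $p_{\ing}$-cartesian. A cartesian fibration in which all morphisms are cartesian is exactly a right fibration, and conversely a right fibration is a cartesian fibration with all morphisms cartesian. This gives the claimed equivalence.

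The only mildly delicate point is the dualization step. One must check that $(-)^{\op}$ exchanges egressive and ingressive morphisms, which is the remark on $\Fact^{\eg}\simeq\Fact^{\ing}$ together with the symmetry of orthogonality, and that it exchanges cartesian and cocartesian morphisms; both are formal. Everything else is bookkeeping.
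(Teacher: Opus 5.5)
Your proposal is correct and follows the argument the paper intends: the paper states the corollary with \qed as an immediate consequence of Proposition~\ref{CocartesianFact} and its dual. The dual is obtained by passing to opposites, which swaps egressive with ingressive and cartesian with cocartesian. You combine this with the standard fact that a cartesian fibration all of whose morphisms are cartesian is exactly a right fibration, and your write-up spells out this bookkeeping in both directions.
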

\begin{remark}
In particular, this demonstrates that our definition of a curved orthofibration is consistent with \cite[Definition 5.14]{HHNL} and \cite[Definition 4.1]{Juran} (see also \cite[Lemma 4.2]{Juran}).
\end{remark}

\begin{proposition}\label{FactorizationSystem}
Let $\C^{\dagger}$ be a category equipped with a factorization system and $p\colon\E\rightarrow\C$ a functor admitting cocartesian resp. cartesian lifts of egressive resp. ingressive morphisms. Furthermore, let $\E_c^{\dagger}$ be a factorization system on the fiber $\E_c$ for every $c\in\C$ such that cocartesian resp. cartesian transport along egressive resp. ingressive morphisms preserves egressive resp. ingressive morphisms in the fibers. 

Then there is a factorization system $\E^{\dagger}$ on $\E$ that restricts to the factorization system $\E_c^{\dagger}$ on $\E_c$ for all $c\in\C$ and such that $p^{\dagger}\colon\E^{\dagger}\rightarrow\C^{\dagger}$ is an ambifibration. This factorization system is characterized as follows:
\begin{itemize}
\item The egressive morphisms in $\E^{\dagger}$ are precisely those that factor as a composite of a $p$-cocartesian lift of an egressive morphism in $\C^{\dagger}$ followed by a fiberwise egressive morphism.
\item The ingressive morphisms in $\E^{\dagger}$ are precisely those that factor as a composite of a fiberwise ingressive morphism and a $p$-cartesian lift of an ingressive morphism in $\C^{\dagger}$.
\end{itemize}
\end{proposition}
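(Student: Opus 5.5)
We define the two classes exactly as in the statement. $\E_{\eg}$ consists of morphisms $x\to y$ over $e\colon c\twoheadrightarrow d$ of the form $x\to e_{\#}x\to y$, with the first map $p$-cocartesian and the second egressive in $\E_d^{\dagger}$. $\E_{\ing}$ consists of morphisms of the form $x\to i^{\ast}y\to y$ over $i\colon c\rightarrowtail d$, with the first map ingressive in $\E_c^{\dagger}$ and the second $p$-cartesian.

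Both classes are equivalence-stable, since (co)cartesian lifts and fiberwise classes are. By \cite[Proposition 5.2.8.11]{HTT} and the definition recalled in Section \ref{FactSysDoubleCat}, it then suffices to prove two things: every morphism factors as a member of $\E_{\eg}$ followed by a member of $\E_{\ing}$, and the two classes are orthogonal. Closure under composition is then automatic.

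\textbf{Factorization.} Let $\varphi\colon x\to y$ lie over $f\colon c\to d$. Factor $f\simeq i\circ e$ with $e\colon c\twoheadrightarrow m$ and $i\colon m\rightarrowtail d$. The universal properties of the cocartesian lift $x\to e_{\#}x$ and of the cartesian lift $i^{\ast}y\to y$ give an essentially unique factorization
\[
\varphi\simeq\bigl(x\to e_{\#}x\xrightarrow{\psi}i^{\ast}y\to y\bigr),
\]
where $\psi$ lies in the fiber $\E_m$. Factoring $\psi$ in $\E_m^{\dagger}$ as $e_{\#}x\twoheadrightarrow z\rightarrowtail i^{\ast}y$ yields the desired factorization of $\varphi$.

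\textbf{Orthogonality.} Orthogonality of a pair of morphisms is stable under composition in each variable, by pasting cartesian squares. Since each class consists of two-step composites, it suffices to check the four elementary pairs. These are: a $p$-cocartesian lift $u$ of an egressive morphism, or a fiberwise egressive $u$, against a $p$-cartesian lift $v$ of an ingressive morphism, or a fiberwise ingressive $v$.

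For this, we map the square $(\ref{Orthogonality})$ for $(u,v)$ in $\E$ to the corresponding square for $(pu,pv)$ in $\C$, which is cartesian by orthogonality in $\C^{\dagger}$. It then remains to see that the square in $\E$ induces equivalences on fibers over each point of the square in $\C$.
\begin{itemize}[label=$\cdot$]
\item If $u$ is cocartesian, then $\Map_{\E}(b,-)\to\Map_{\E}(a,-)$ is a pullback of $\Map_{\C}(pb,p-)\to\Map_{\C}(pa,p-)$, so the fibers agree.
\item If $v$ is cartesian, the dual statement holds for $v_{\circ}$.
\item This leaves the case of fiberwise $u\colon a\twoheadrightarrow b$ in $\E_c$ and fiberwise $v\colon x\rightarrowtail y$ in $\E_d$. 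Over a fixed $g\colon c\to d$, factor $g\simeq i\circ e$ as above. The fiber of $\Map_{\E}(b,x)$ over $g$ is $\Map_{\E_m}(e_{\#}b,i^{\ast}x)$, and similarly for the other three corners. The fiber of the square over $g$ is therefore the orthogonality square for $e_{\#}u$ against $i^{\ast}v$ in $\E_m$. This square is cartesian, because the hypothesis says transport preserves the relevant classes.
\end{itemize}

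\textbf{Remaining assertions.} $p^{\dagger}$ preserves both classes by construction. It is an ambifibration because cocartesian lifts of egressive morphisms lie in $\E_{\eg}$ (take the fiberwise part to be the identity), and dually for cartesian lifts. The restriction to $\E_c$ is $\E_c^{\dagger}$: a morphism of $\E_{\eg}$ lying in $\E_c$ is a cocartesian lift of $\id_c$, hence an equivalence, composed with a fiberwise egressive morphism. The characterization is then forced. Any factorization system making $p$ an ambifibration and restricting to the $\E_c^{\dagger}$ contains our $\E_{\eg}$ and $\E_{\ing}$, since its classes are closed under composition. Because each class of a factorization system determines the other, these containments must be equalities.

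\textbf{Expected main obstacle.} The delicate step is the fiberwise identification in the third orthogonality case. One must check that the identification $\Map_{\E}(b,x)_g\simeq\Map_{\E_m}(e_{\#}b,i^{\ast}x)$ is natural in $u$ and $v$, so that the whole fiber square really is the transported orthogonality square. I would establish this by composing the cocartesian universal property for $e$ with the cartesian universal property for $i$, and using that the factorization of $g$ is functorial via $\mathrm{fact}$.
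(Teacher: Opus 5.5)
Your proposal is correct and follows the paper's proof essentially step for step. The factorization is the same, through the induced fiber map $e_{\#}x\to i^{\ast}y$ over the middle object, and orthogonality is likewise reduced to the four elementary pairs; the paper packages this as a $3\times 3$ grid of mapping-anima squares with pullback pasting, and, as in your proposal, the cocartesian/cartesian cases reduce to orthogonality in $\C^{\dagger}$, while the fiberwise--fiberwise case is checked over each point of $\Map_{\C}(c,d)$ by identifying it with the orthogonality square for $e_{\#}u$ against $i^{\ast}v$ in $\E_m^{\dagger}$. Your closing uniqueness argument is the one the paper gives in the proof of Theorem \ref{FactorizationSystemNaturality}.
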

\begin{proof}[Proof.]
First, let $f\colon x\rightarrow z$ be an arbitrary morphism in $\E$ and factor $p(f)$ as an egressive morphism $g\colon p(x)\twoheadrightarrow y$ followed by an ingressive morphism $h\colon y\rightarrowtail p(z)$. Then consider the diagram
\begin{equation*}
\begin{tikzcd}
x\arrow[rrrr,"f"]\arrow[dr,twoheadrightarrow,"\mathrm{cocart}"']\arrow[drrr,dotted] & & & & z.\\
& g_{\#}x\arrow[rr,dotted]\arrow[urrr,dotted,crossing over]\arrow[dr,twoheadrightarrow,dashed] & & h^{\ast}z\arrow[ur,rightarrowtail,"\mathrm{cart}"'] & \\
& & \overline{y}\arrow[ur,rightarrowtail,dashed] & & 
\end{tikzcd}
\end{equation*}
The dotted morphism $g_{\#}x\rightarrow h^{\ast}z$ is induced from $f$ by the universal properties of (co-)cartesian morphisms and the dashed bottom triangle is the egressive-ingressive factorization of this morphism in the fiber $\E_y^{\dagger}$. This provides an egressive-ingressive factorization of $f$ in $\E^{\dagger}$.

It remains to prove orthogonality. To this end, consider an egressive resp. ingressive in morphism in $\E^{\dagger}$:
\begin{equation*}
\begin{tikzcd}
a\arrow[rr,twoheadrightarrow,"g"]\arrow[dr,twoheadrightarrow,"\mathrm{cocart}"'] & & b, & x\arrow[rr,rightarrowtail,"f"]\arrow[dr,rightarrowtail,"f^{\prime}"'] & & y.\\
& b^{\prime}\arrow[ur,twoheadrightarrow,"g^{\prime}"'] & & & x^{\prime}\arrow[ur,rightarrowtail,"\mathrm{cart}"'] &
\end{tikzcd}
\end{equation*}
Then we form the diagram
\begin{equation*}
\begin{tikzcd}[column sep=0.4em]
& \Map_{\C}(p(b),p(x))\arrow[rr,"p(f^{\prime})_{\circ}",near start,color=violet]\arrow[dd,"p(g^{\prime})^{\circ}",near end,color=violet] & & \Map_{\C}(p(b),p(x^{\prime}))\arrow[rr,color=teal]\arrow[dd,"p(g^{\prime})^{\circ}",near end,color=violet] & & \Map_{\C}(p(b),p(y))\arrow[dd,"p(g^{\prime})^{\circ}",near end,color=violet]\\
\Map_{\E}(b,x)\arrow[rr,"f^{\prime}_{\circ}",near start,crossing over]\arrow[dd,"(g^{\prime})^{\circ}",near end]\arrow[ur,"p"] & & \Map_{\E}(b,x^{\prime})\arrow[rr,crossing over,near start,color=teal]\arrow[ur,"p",color=teal] & & \Map_{\E}(b,y)\arrow[ur,"p",color=teal] & \\
& \Map_{\C}(p(b^{\prime}),p(x))\arrow[rr,"p(f^{\prime})_{\circ}",near start,color=violet]\arrow[dd,color=orange] & & \Map_{\C}(p(b^{\prime}),p(x^{\prime}))\arrow[rr,color=teal]\arrow[dd,color=orange] & & \Map_{\C}(p(b^{\prime}),p(y))\arrow[dd,color=orange]\\
\Map_{\E}(b^{\prime},x)\arrow[rr,"f^{\prime}_{\circ}",near start,crossing over]\arrow[dd,color=orange]\arrow[ur,"p",color=orange] & & \Map_{\E}(b^{\prime},x^{\prime})\arrow[rr,crossing over,color=teal]\arrow[from=uu,"(g^{\prime})^{\circ}",near end,crossing over]\arrow[ur,"p",rightharpoonup,color=teal,shift left=0.02em]\arrow[ur,rightharpoondown,color=orange,shift right=0.02em] & & \Map_{\E}(b^{\prime},y)\arrow[from=uu,"(g^{\prime})^{\circ}",near end,crossing over]\arrow[ur,"p",rightharpoonup,color=teal,shift left=0.02em]\arrow[ur,rightharpoondown,color=orange,shift right=0.02em] & \\
& \Map_{\C}(p(a),p(x))\arrow[rr,"p(f^{\prime})_{\circ}",near start,color=violet] & & \Map_{\C}(p(a),p(x^{\prime}))\arrow[rr,color=teal] & & \Map_{\C}(p(a),p(y)).\\
\Map_{\E}(a,x)\arrow[rr,"f^{\prime}_{\circ}",near start]\arrow[ur,"p",color=orange] & & \Map_{\E}(a,x^{\prime})\arrow[rr,color=teal]\arrow[ur,"p",rightharpoonup,color=teal,shift left=0.02em]\arrow[ur,rightharpoondown,color=orange,shift right=0.02em]\arrow[from=uu,crossing over,color=orange] & & \Map_{\E}(a,y)\arrow[ur,"p",rightharpoonup,color=teal,shift left=0.02em]\arrow[ur,rightharpoondown,color=orange,shift right=0.02em]\arrow[from=uu,crossing over,color=orange] & 
\end{tikzcd}
\end{equation*}
The squares consisting of the orange-colored arrows are cartesian since $a\twoheadrightarrow b^{\prime}$ is $p$-cocartesian, the squares consisting of the teal-colored arrows are cartesian since $x^{\prime}\rightarrowtail y$ is $p$-cartesian and the violet-colored arrows are equivalences. It needs to be proven that the total front face of this diagram is cartesian, which we do by proving that its four component squares are cartesian. For all but the top left square, this reduces to showing the corresponding square on the back face is cartesian by pullback-pasting. For the bottom right back square, this is immediate by orthogonality in $\C^{\dagger}$. For the other two squares, it is clear since their horizontal resp. vertical morphisms are equivalences.

Thus, it remains to prove that top left front square is cartesian. Since the top left back square consists entirely of equivalences, the front square lifts to the slice over $\Map_{\C}(p(b),p(x))$ and we may check that it is cartesian fiberwise. To this end, let $h\colon p(b)\rightarrow p(x)$ be arbitrary and factor it as an egressive morphism $h_{\eg}\colon p(b)\twoheadrightarrow u$ followed by an ingressive morphism $h_{\ing}\colon u\rightarrowtail p(x)$, which admit a cocartesian lift $\tilde{h}_{\eg}\colon b\twoheadrightarrow s$ and a cartesian lift $\tilde{h}_{\ing}\colon t\rightarrowtail x$ respectively. Then we have a natural diagram
\begin{equation*}
\begin{tikzcd}
\Map_{\E_u}(s,t)\arrow[r,"(\tilde{h}_{\eg})^{\circ}"]\arrow[d,"p"]\pb & \Map_{\E}(b,t)\arrow[r,"(\tilde{h}_{\ing})_{\circ}"]\arrow[d,"p"]\pb & \Map_{\E}(b,x)\arrow[d,"p"]\\
\{\ast\}\arrow[r,"h_{\eg}"] & \Map_{\C}(p(b),u)\arrow[r,"(h_{\ing})_{\circ}"] & \Map_{\C}(p(b),p(x)).
\end{tikzcd}
\end{equation*}
The left square is cartesian because $\tilde{h}_{\eg}$ is $p$-cocartesian and the right square is cartesian because $\tilde{h}_{\ing}$ is $p$-cartesian, hence the composite square is also cartesian. In choosing another cocartesian lift $\tilde{h}_{\eg}^{\prime}\colon b^{\prime}\rightarrow s^{\prime}$ of $h_{\eg}$ resp. another cartesian lift $\tilde{h}_{\ing}^{\prime}\colon t^{\prime}\rightarrow x^{\prime}$ of $h_{\ing}$, we obtain morphisms $h_{\eg,\#}g^{\prime}\colon s^{\prime}\rightarrow s$ resp. $h_{\ing}^{\ast}f^{\prime}\colon t\rightarrow t^{\prime}$, which are the cocartesian resp. cartesian transport of $g^{\prime}$ resp. $f^{\prime}$ along $h_{\eg}$ resp. $h_{\ing}$. In total, this allows us to identify the fiber of the top left front square over $f$ with the square
\begin{equation*}
\begin{tikzcd}[column sep=huge]
\Map_{\E_u}(s,t)\arrow[r,"(h_{\ing}^{\ast}f^{\prime})_{\circ}"]\arrow[d,"(h_{\eg,\#}g^{\prime})^{\circ}"] & \Map_{\E_u}(s,t^{\prime})\arrow[d,"(h_{\eg,\#}g^{\prime})^{\circ}"]\\
\Map_{\E_u}(s^{\prime},t)\arrow[r,"(h_{\ing}^{\ast}f^{\prime})_{\circ}"] & \Map_{\E_u}(s^{\prime},t^{\prime}).
\end{tikzcd}
\end{equation*}
This square is cartesian by orthogonality in $\E_u^{\dagger}$, since the assumptions precisely give that $h_{\eg,\#}g^{\prime}\colon s^{\prime}\rightarrow s$ is egressive because $g^{\prime}\colon b^{\prime}\rightarrow b$ is egressive and $h_{\ing}^{\ast}f^{\prime}\colon t\rightarrow t^{\prime}$ is ingressive because $f^{\prime}\colon x\rightarrowtail x^{\prime}$ is ingressive.

Thus, we have proven that $\E^{\dagger}$ is a factorization system and then $p^{\dagger}\colon\E^{\dagger}\rightarrow\C^{\dagger}$ is an ambifibration by definition. Furthermore, because (co-)cartesian lifts of equivalences are equivalences, this factorization system restricts to the given factorization system $\E_c^{\dagger}$ on $\E_c$ for all $c\in\C$.
\end{proof}

\begin{theorem}\label{FactorizationSystemNaturality}
Let $\C^{\dagger}$ be a category equipped with a factorization system and $p\colon\E\rightarrow\C$ a functor, which admits cocartesian resp. cartesian lifts of egressive resp. ingressive morphisms. Then the anima of ambifibrations $p^{\dagger}\colon\E^{\dagger}\rightarrow\C^{\dagger}$ lifting $p$ is discrete and equivalent to the set of collections of factorization systems on the fibers $\E_c,\,c\in\C$ such that cocartesian resp. cartesian transport along egressive resp. ingressive morphisms preserves egressive resp. ingressive morphisms in the fibers.
\end{theorem}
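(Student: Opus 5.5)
The anima of lifts of $p$ to an ambifibration is the fiber of $\Fact_{/\C^{\dagger}}\rightarrow\Cat_{/\C}$ over $p$, restricted to those lifts that are ambifibrations. Since $p\colon\Fact\rightarrow\Cat$ is faithful (Proposition \ref{ForgetfulFunctors}), the anima of factorization systems $\E^{\dagger}$ on $\E$ for which $p$ underlies a functor $\E^{\dagger}\rightarrow\C^{\dagger}$ is a subset of the set of factorization systems on $\E$. Lifting $p$ to a map in $\Fact$ is a condition, namely preservation of egressive and ingressive morphisms, and this condition is detected by faithfulness. Hence the anima in question is discrete. It remains to construct a bijection with the set $S$ of compatible fiberwise collections.

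Define $\Phi$ from ambifibrations to $S$ by restricting $\E^{\dagger}$ to each fiber $\E_c$. We first check that this gives a factorization system on $\E_c$, i.e. that $\E_c\hookrightarrow\E$ inherits one. Given a morphism $u$ in $\E_c$ with factorization $u=i\circ e$ in $\E^{\dagger}$, the image $p(e)$ is egressive, $p(i)$ is ingressive, and $p(i)p(e)=\id_c$. By uniqueness of factorizations in $\C^{\dagger}$, both are equivalences, so after adjusting we may assume the factorization lies in $\E_c$. Orthogonality within $\E_c$ then follows from the fiber sequence of mapping anima $\Map_{\E_c}\rightarrow\Map_{\E}\rightarrow\Map_{\C}$: the orthogonality square in $\E$ maps to a square in $\C$ of identities, so it is cartesian on fibers over $\id_c$.

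Compatibility of $\Phi$ with transport goes as follows. For an egressive $g\colon c\twoheadrightarrow d$ and a fiberwise egressive $e\colon x\rightarrow x'$, the transport $g_{\#}e$ is characterized by $g_{\#}e\circ\mathrm{cocart}_x\simeq\mathrm{cocart}_{x'}\circ e$. The right-hand side is egressive, and the cocartesian lifts are egressive by the ambifibration hypothesis. Left cancellation for egressive morphisms would require care, so we argue instead by factoring $g_{\#}e$ in the fiber as $i\circ e'$ and using uniqueness of factorization of the composite to force $i$ to be an equivalence. The cartesian case is dual.

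Define $\Psi$ from $S$ to ambifibrations by the construction of Proposition \ref{FactorizationSystem}. We then verify both composites.
\begin{itemize}
\item $\Phi\Psi=\id$ holds by the last sentence of Proposition \ref{FactorizationSystem}.
\item For $\Psi\Phi=\id$, we show that any ambifibration structure $\E^{\dagger}$ coincides with the constructed one. Since a factorization system is determined by its egressive class, it suffices to compare egressives. Every morphism of the form (cocartesian lift of an egressive)$\circ$(fiberwise egressive), in the order stated in the proposition, is egressive in $\E^{\dagger}$ because egressives compose. So the constructed egressive class is contained in that of $\E^{\dagger}$, and by Proposition \ref{ForgetfulFunctors} the two classes are equal: containment of egressive classes implies reverse containment of ingressive classes, and applying the same argument to ingressives yields equality.
\end{itemize}

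The main obstacle I expect is the restriction step: showing rigorously that fibers inherit factorization systems, and that transport preserves the classes, without a left-cancellation property for egressives. I would handle both by the uniqueness-of-factorization argument sketched above, using that morphisms over equivalences in $\C$ which are egressive and ingressive are equivalences.
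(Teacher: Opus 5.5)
Your proposal is correct and follows essentially the same route as the paper: restrict to fibers, verify transport-stability, get surjectivity from Proposition \ref{FactorizationSystem}, and get injectivity by observing that the constructed egressive and ingressive classes are contained in those of any ambifibration structure and then invoking mutual orthogonality. The differences are cosmetic. The paper obtains the fiberwise factorization systems directly from Proposition \ref{CatFactLimits}, since the fiber is a pullback in $\Cat$ along $\ast\rightarrow\C$. Your uniqueness-of-factorization workaround for transport is exactly the proof of right cancellation for egressive morphisms (if $h\circ k$ and $k$ are egressive then so is $h$). That property holds in any factorization system and is what the paper cites, so the extra care you anticipated is not needed.
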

\begin{proof}[Proof.]
Let $p\colon\E^{\dagger}\rightarrow\C^{\dagger}$ be an ambifibration lifting $p$. For any object $c\colon\ast\rightarrow\C$, the factorization system $\E^{\dagger}$ on $\E$ restricts to a factorization system $\E_c^{\dagger}$ on the fiber $\E_c$ by Proposition \ref{CatFactLimits}. Let $g\colon a\twoheadrightarrow b$ be an egressive morphism in $\C$ and $g_{\#}\colon\E_a\rightarrow\E_b$ the associated cocartesian transport. If $f\colon x\twoheadrightarrow y$ is an egressive morphism in $\E_a^{\dagger}$, consider the square
\begin{equation*}
\begin{tikzcd}[column sep=large]
x\arrow[r,twoheadrightarrow,"\mathrm{cocart}"]\arrow[d,twoheadrightarrow,"f"] & g_{\#}x\arrow[d,"g_{\#}f"]\\
y\arrow[r,twoheadrightarrow,"\mathrm{cocart}"] & g_{\#}y.
\end{tikzcd}
\end{equation*}
The cocartesian morphisms are egressive since $p$ is an ambifibration and $f$ is egressive by assumption, hence $g_{\#}f$ is again egressive by stability under composition and right cancellation. The dual argument applies to ingressive morphisms, hence we obtain a map from the anima of ambifibrations lifting $p$ (which is a priori discrete by Proposition \ref{ForgetfulFunctors}) to the set of collections of transport-stable factorization systems on the fibers. 

This map is essentially surjective by Proposition \ref{FactorizationSystem}, but because the classes of egressive/ingressive morphisms given there are necessarily egressive/ingressive for any ambifibration $p\colon\E^{\dagger}\rightarrow\C^{\dagger}$ lifting $p$, this factorization system is unique by mutual orthogonality and the map is also essentially injective.
\end{proof}

\begin{remark}
In the case $\C$ is equipped with the trivial factorization system $\C^{\sharp}$, this recovers \cite[Lemma 3.4]{Shah} as well as (the dual of) \cite[Lemma B.2.4]{Soergel} and is refined in Theorem \ref{UnStraighteningEg}.
\end{remark}
\begin{remark}
In the case that every fiber $\E_c$ is equipped with the trivial factorization system $(\E_c)^{\sharp}$, we obtain the factorization system consisting of lifts of egressive morphisms and cartesian lifts of ingressive morphisms. This is dual to \cite[Proposition 2.1.2.5]{HA} (in this case, the existence of cocartesian lifts of egressive morphisms is an extraneous assumption) and already refined in Juran's un/straightening equivalence Theorem \ref{JuranUnStraightening}.
\end{remark}

\begin{remark}
In the setting of Theorem \ref{FactorizationSystemNaturality}, assume that $p\colon\E\rightarrow\C$ is a bicartesian fibration. Then using the compatibility of orthogonality and adjunctions (e.g. \cite[Lemma 3.1.4]{ABFJ}), the cocartesian transport along a morphism preserves egressive morphisms if and only if the corresponding cartesian transport preserves ingressive morphisms. In particular, the transport-stability is equivalent to requiring that \textit{every} cocartesian (resp. cartesian) transport preserves egressive (resp. ingressive) morphisms in the fibers.
\end{remark}

\begin{example}\label{FernandesExample}
Take the source functor $\ev_0\colon\Fun([1],\An)\rightarrow\An$, which is a bicartesian fibration. Its cartesian transport along $f\colon X\rightarrow Y$ is the functor $f^{\circ}\colon\An_{Y/}\rightarrow\An_{X/}$ over $\An$. Thus, if we equip the fibers with the factorization system obtained by lifting the ($(n+1)$-connected, $(n+1)$-truncated) factorization system \cite[Example 3.1.7, d)]{ABFJ} through the left fibration $p\colon\An_{Z/}\rightarrow\An$ \cite[Lemma 3.1.8]{ABFJ}, these are compatible with cartesian transport along any map in the base. Equipping the base with the ($n$-connected, $n$-truncated) factorization system, Proposition \ref{FactorizationSystem} then induces a factorization system on $\Fun([1],\An)$. This recovers the factorization system of \cite[Definition 7.1.1]{Fernandes}, which inspired our construction.
\end{example}

\section{The Un/Straightening Equivalence}\label{UnStraighteningSection}

In this section, we prove Theorem \ref{UnStraighteningIntro} and other un/straightening equivalences for categories equipped with a factorization system such as Theorem \ref{UnStraighteningEg}. This is done by lifting $2$- or double-categorical un/straightening equivalences by combining Theorem \ref{FaithfulLifting} and the classification of Theorem \ref{FactorizationSystemNaturality}.

First, we need to define the $2$- and double categories of categories equipped with a factorization system.

\begin{definition}
The double category $\bbFact$ of categories equipped with a factorization system is defined as the wide and locally full sub-double Segal anima $\bbFact\hookrightarrow\bbCat\times_{T\Cat^{\simeq}}T\Fact^{\simeq}$ on the horizontal resp. vertical morphisms corresponding to functors preserving egressive resp. ingressive morphisms.
\end{definition}
\begin{remark}
The above pullback is formed along the forgetful functor $Tp^{\simeq}\colon T\Fact^{\simeq}\rightarrow T\Cat^{\simeq}$ and the functor $\bbCat\rightarrow T\Cat^{\simeq}$ adjoint to the equivalence $\bbCat_{0,0}\simeq\Map_{2\Cat}([0]\boxtimes[0],\Cat)\simeq\Cat^{\simeq}$.
\end{remark}

\begin{proposition}\label{ForgetDoubleFunctor}
The forgetful functor $p\colon\bbFact\rightarrow\bbCat$ is locally fully faithful. For a category $\C$, the core $p^{-1}(\C)^{\simeq}$ of the fiber is discrete and equivalent to the set of factorization systems on $\C$.
\end{proposition}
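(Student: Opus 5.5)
The plan is to factor $p$ as the monomorphism $\bbFact\hookrightarrow\bbCat\times_{T\Cat^{\simeq}}T\Fact^{\simeq}$ followed by the projection $q\colon\bbCat\times_{T\Cat^{\simeq}}T\Fact^{\simeq}\rightarrow\bbCat$. Local full faithfulness will be checked through the characterization of Proposition \ref{FaithfulCells}, specialized to $k=1$. This means two things. First, for $\abs{\underline{\varepsilon}}=1$ (horizontal and vertical morphisms) the fiberwise maps must be monomorphisms. Second, for $\underline{\varepsilon}=(1,1)$ (squares) the fiberwise map
\begin{equation*}
\Map(\bbD^{(1,1)},\bbFact)\times_{\Map(\partial\bbD^{(1,1)},\bbFact)}\{\alpha\}\rightarrow\Map(\bbD^{(1,1)},\bbCat)\times_{\Map(\partial\bbD^{(1,1)},\bbCat)}\{p\alpha\}
\end{equation*}
must be an equivalence.

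I will treat $q$ first. It is base-changed from $T p^{\simeq}\colon T\Fact^{\simeq}\rightarrow T\Cat^{\simeq}$, and $T$ carries every map of anima to a fully faithful map of double Segal anima. Indeed, $T$ is a right adjoint and $T(X)\simeq T(Y)\times_{TY}TX$ tautologically, so $Tg'$ is the identity. By the lemma on stability of fully faithful maps under base change, $q$ is therefore fully faithful. The inclusion $\bbFact\hookrightarrow\bbCat\times_{T\Cat^{\simeq}}T\Fact^{\simeq}$ is a monomorphism and an equivalence on objects, since it is wide. Proposition \ref{FaithfulVsMono}, or rather the definition directly, then makes it faithful.

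For the square condition I will use the meaning of locally full. Once the boundary $\alpha$ lies in $\bbFact$, that is, once its horizontal edges preserve egressive morphisms and its vertical edges preserve ingressive ones, every square of the pullback with that boundary lies in $\bbFact$. So on $(1,1)$-cells the inclusion induces an equivalence of fibers over fixed boundaries. Since $q$ is fully faithful, the composite $p$ also induces equivalences on these fibers. On $1$-cells the composite is a monomorphism, as a composite of a monomorphism with an equivalence. Together these give local full faithfulness.

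For the fiber, I will compute $p^{-1}(\C)$ as a double Segal anima. Its objects are the points of $\Fact^{\simeq}\times_{\Cat^{\simeq}}\{\C\}$, which form the discrete set of factorization systems on $\C$ by Proposition \ref{ForgetfulFunctors}. Its core's equivalences are cells lying over identities of $\C$. An equivalence between $\C^{\dagger}$ and $\C^{\dagger\prime}$ lying over $\id_{\C}$ exists only if $\id_{\C}$ preserves egressive morphisms, giving $\C_{\eg}\subseteq\C^{\prime}_{\eg}$, and its inverse gives the reverse inclusion. The fiber of morphisms over $\id$ is a subanima of a point, so the core is discrete. Equivalently, I apply Corollary \ref{FaithfulOnObjects}-style reasoning to the faithful functor $p$. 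I expect the main obstacle to be the bookkeeping identifying the core of the fiber, $(p^{-1}(\C))^{\simeq}$, which is taken in the double (or underlying horizontal) sense, with objects modulo horizontal invertible cells over $\id_\C$. Mutual orthogonality, as in Proposition \ref{ForgetfulFunctors}, handles this.
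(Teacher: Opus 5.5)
Your proposal is correct and follows the paper's argument. The paper's ``by definition'' for local full faithfulness is exactly your factorization into the wide, locally full inclusion followed by the base change of $Tp^{\simeq}$. For the fiber, you identify its objects with $\Fact^{\simeq}\times_{\Cat^{\simeq}}\{\C\}$ and invoke Proposition \ref{ForgetfulFunctors}, as the paper does (it phrases the identification as ``$T$ preserves limits''). Your extra discussion of invertible cells over $\id_{\C}$ is unnecessary. Note also that Corollary \ref{FaithfulOnObjects} could not be applied verbatim here, since completeness of $\bbFact$ is only proved afterwards.
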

\begin{proof}[Proof.]
The functor $p$ is locally fully faithful by definition. The fiber $\bbFact\times_{\bbCat}\{\C\}$ is a wide, locally full sub-double Segal anima of $T\Fact^{\simeq}\times_{T\Cat^{\simeq}}\{\C\}$ and $T$ preserves limits, hence we conclude by Proposition \ref{ForgetfulFunctors}.
\end{proof}

\begin{proposition}
The double Segal anima $\bbFact$ is in fact complete, i.e. a double category.
\end{proposition}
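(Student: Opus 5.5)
We need to show $\bbFact$ is complete: it is a double Segal anima, so completeness means that each of the Segal objects involved has constant core. Since $\bbFact$ is a $2$-uple Segal object, we check completeness in both directions. Concretely, for the direction of horizontal morphisms, we must show that for each $m$, the Segal anima $\bbFact_{\bullet,m}$ is complete, and likewise for vertical. We then lean on Remark \ref{CompletenessCharacterization}: a Segal object $X$ is complete if and only if $s_0\colon X_0\rightarrow X_{\eq}$ is an equivalence, where $X_{\eq}\hookrightarrow X_1$ is the subobject of invertible morphisms.

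The key step uses the monomorphism structure. By construction $\bbFact\hookrightarrow\bbCat\times_{T\Cat^{\simeq}}T\Fact^{\simeq}$ is a monomorphism of double Segal anima, and the target is a pullback of complete double Segal anima ($\bbCat=\Sq(\bfCat)$ is complete, $T$ of an anima is a complete groupoid object only in the trivial sense, but the pullback $\bbCat\times_{T\Cat^{\simeq}}T\Fact^{\simeq}$ has $0$-truncated fibers over $\bbCat$). More usefully: $p\colon\bbFact\rightarrow\bbCat$ is faithful (Proposition \ref{ForgetDoubleFunctor}), so by Corollary \ref{FaithfulOnObjects}-type reasoning and Proposition \ref{FaithfulLiftingSegal}, $X_{\eq}$ for $\bbFact$ identifies with the subanima of $(\bbCat)_{\eq}\times_{\Cat^{\simeq}\times\Cat^{\simeq}}(\Fact^{\simeq}\times\Fact^{\simeq})$ consisting of equivalences $F\colon\C\iso\D$ (together with their inverses) such that $F$ and $F^{-1}$ preserve egressive (resp. ingressive, for the vertical direction) morphisms. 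Using completeness of $\bbCat$, $(\bbCat)_{\eq}\simeq\Cat^{\simeq}$, so this becomes the anima of triples $(\C^{\dagger},\D^{\dagger},F\colon\C\simeq\D)$ with $F^{\pm1}$ egressive-preserving, i.e. equivalences in $\Fact^{\eg}$. By Proposition \ref{EgIsoFull}, such equivalences are exactly equivalences in $\Fact$, so this anima is $\Fun([1],\Fact)^{\simeq}$ restricted to invertible arrows, i.e. $\Fact^{\simeq}\simeq\bbFact_{0,0}$, and $s_0$ is the identification. For the higher levels $m$ the same argument applies levelwise, since squares/sequences in $\bbFact$ are squares in $\bbCat$ with a property on their edges, and $\bbCat_{\bullet,m}$ is complete.

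The main obstacle is handling the levels $m>0$ cleanly: one must check that an invertible horizontal morphism between vertical $m$-strings in $\bbFact$ is just a levelwise invertible one in $\bbCat$ whose components are equivalences in $\Fact$, and that the squares carry no additional data. Here I would use local fullness of $\bbFact\hookrightarrow\bbCat\times_{T\Cat^{\simeq}}T\Fact^{\simeq}$: squares impose no condition beyond their boundary, so the computation reduces to the boundary edges, where the $m=0$ argument and Proposition \ref{EgIsoFull} (an equivalence preserving egressives preserves ingressives) finish it.
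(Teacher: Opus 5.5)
Your proposal is correct and is essentially the paper's argument. Both embed $\bbFact$ into $\bbCat\times_{T\Cat^{\simeq}}T\Fact^{\simeq}$, use completeness of $\bbCat$ to see that invertible cells are degenerate downstairs, and conclude via Proposition \ref{EgIsoFull} together with faithfulness of the forgetful functor (i.e.\ $0$-truncatedness of $p^{\simeq}$). The paper phrases this as a monomorphism-plus-essential-surjectivity check of $s_0\colon\bbFact_{1,0}\to\bbFact_{1,\eq}$ directly at the level of squares, which is the case you defer as the ``main obstacle''; the object level then follows by a retract argument.

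One correction to an aside you do not actually use: $TY$ is complete only when $Y$ is $(-1)$-truncated, so the ambient pullback is not a pullback of complete objects. Nor is it complete itself: the identity functor between two different factorization systems on the same category is invertible there but not degenerate, which is exactly why Proposition \ref{EgIsoFull} is needed.
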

\begin{proof}[Proof.]
We prove that $\bbFact$ is vertically complete. The horizontal completeness is proven analogously. To this end, consider the commutative diagram
\begin{equation*}
\begin{tikzcd}
\bbFact_{1,0}\arrow[d,"s_0"]\arrow[r,hookrightarrow] & (\bbCat\times_{T\Cat^{\simeq}}T\Fact^{\simeq})_{1,0}\arrow[d,"s_0"]\arrow[r,symbol=\simeq] & \bbCat_{1,0}\times_{\Cat^{\simeq,\times2}}\Fact^{\simeq,\times2}\arrow[d,"s_0"]\\
\bbFact_{1,\eq}\arrow[r,hookrightarrow] & (\bbCat\times_{T\Cat^{\simeq}}T\Fact^{\simeq})_{1,\eq}\arrow[r,symbol=\simeq] & \bbCat_{1,\eq}\times_{\Cat^{\simeq,\times4}}\Fact^{\simeq,\times4}.
\end{tikzcd}
\end{equation*}
The degeneracy $s_0\colon\bbCat_{1,0}\rightarrow\bbCat_{1,\eq}$ is an equivalence since $\bbCat$ is a double category, hence the right vertical map is a base-change of $\Fact^{\simeq,\times2}\rightarrow\Cat^{\simeq,\times2}\times_{\Cat^{\simeq,\times4}}\Fact^{\simeq,\times4}$ by pullback-pasting and thus a monomorphism since $p^{\simeq}\colon\Fact^{\simeq}\rightarrow\Cat^{\simeq}$ is $0$-truncated by Propositions \ref{ForgetfulFunctors} and \ref{FaithfulIsTruncatedOnCore}.

Hence the left vertical map $s_0\colon\bbFact_{1,0}\rightarrow\bbFact_{1,\eq}$ is a monomorphism by left cancellation and it remains to verify that it is essentially surjective, i.e. that every vertically invertible square in $\bbFact$ is in fact vertically degenerate. The underlying square in $\bbCat$ is vertically degenerate, so we conclude by Proposition \ref{EgIsoFull}.
\end{proof}
\begin{remark}
Informally speaking, the double category $\bbFact$ has as objects categories equipped with a factorization system, as horizontal resp. vertical morphisms the functors preserving egressive resp. ingressive morphisms and as squares the appropriate oplax commutative squares in the $2$-category $\bfCat$.
\end{remark}

\begin{definition}
The horizontal resp. vertical fragment of $\bbFact$ are denoted $\bfFact^{\eg}\coloneqq\Hor(\bbFact)$ resp. $\bfFact^{\ing}\coloneqq\Ver(\bbFact)$. The $2$-category $\bfFact$ is the wide and locally full subcategory of $\bfFact^{\eg}$ (or, equivalently, $\bfFact^{\ing}$) on the functors preserving both egressive and ingressive morphisms.

There are locally fully faithful functors $p\colon\bfFact\rightarrow\bfCat$ and $p^{\eg}\colon\bfFact^{\eg}\rightarrow\bfCat$ resp. $\bfFact^{\ing}\rightarrow\bfCat$, which on underlying $1$-categories recover the faithful functors $p\colon\Fact\rightarrow\Cat$ and $p^{\eg}\colon\Fact^{\eg}\rightarrow\Cat$ resp. $p^{\ing}\colon\Fact^{\ing}\rightarrow\Cat$ by Proposition \ref{ForgetfulFunctors}.
\end{definition}

\begin{definition}
Let $\C^{\dagger}$ be a category equipped with a factorization system. The full subcategory of $\bfFact^{\eg}_{/\C^{\dagger}}$ on the ambifibrations is denoted by $\bfAmbi^{\eg,\lax}(\C^{\dagger})$ and the wide, locally full subcategory thereof on the functors over $\C^{\dagger}$ preserving cocartesian egressive morphisms as well as cartesian ingressive morphisms by $\bfAmbi^{\eg}(\C^{\dagger})$. Its intersection with $\bfFact_{/\C^{\dagger}}$ is denoted by $\bfAmbi(\C^{\dagger})$ and the underlying $1$-category thereof by $\Ambi(\C^{\dagger})$. Finally, the full subcategory of $\Ambi(\C^{\dagger})$ on the curved orthofibrations is denoted by $\Ortho(\C^{\dagger})$.
\end{definition}
\begin{remark}
Proposition \ref{EgIsoFull} implies that $\bfAmbi^{\eg}(\C^{\dagger})^{\simeq}\simeq\bfAmbi(\C^{\dagger})^{\simeq}$. There are also dual versions denoted $\bfAmbi^{\ing,\oplax}(\C^{\dagger})$, etc.
\end{remark}

\begin{proposition}\label{ForgetOrtho}
Let $\C^{\dagger}$ be a category equipped with a factorization system. The forgetful map $\Ortho(\C^{\dagger})^{\simeq}\rightarrow(\Cat_{/\C})^{\simeq}$ is a monomorphism whose essential image consists of those functors $p\colon\E\rightarrow\C$ that admit cocartesian resp. cartesian lifts of egressive resp. ingressive morphisms.
\end{proposition}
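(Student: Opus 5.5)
The plan is to factor the forgetful map as
\begin{equation*}
\Ortho(\C^{\dagger})^{\simeq}\longrightarrow(\Fact_{/\C^{\dagger}})^{\simeq}\longrightarrow(\Cat_{/\C})^{\simeq}
\end{equation*}
and to treat the two maps separately.

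First I show that the second map is a monomorphism. The forgetful functor $p\colon\Fact\rightarrow\Cat$ is conservative and faithful, and its fibers are discrete by Proposition \ref{ForgetfulFunctors}. Hence $p^{\simeq}\colon\Fact^{\simeq}\rightarrow\Cat^{\simeq}$ is $0$-truncated. On slices, the fiber over a given $\E\rightarrow\C$ is the anima of pairs consisting of a factorization system on $\E$ and a witness that the functor preserves egressive and ingressive morphisms. Preservation is a property, because $\Fact\subseteq\Fun(\Lambda_2^2,\Cat)$ consists of diagrams of monomorphisms. So the fiber is a set, namely the set of factorization systems on $\E$ making $\E\rightarrow\C$ a morphism in $\Fact$. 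The first map is the inclusion of a full subcategory on cores, so it is also a monomorphism. The composite therefore has discrete fibers, and it remains to show that each fiber is either empty or contractible.

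Next I fix $p\colon\E\rightarrow\C$ and study the curved orthofibration structures lifting it. Each such structure is in particular an ambifibration. By Theorem \ref{FactorizationSystemNaturality}, ambifibration structures lifting $p$ correspond exactly to transport-stable collections of factorization systems on the fibers $\E_c$. A curved orthofibration additionally requires every ingressive morphism of $\E^{\dagger}$ to be $p$-cartesian. Applied to an ingressive morphism lying in a fiber $\E_c$ (over $\id_c$), this says it is cartesian over an identity, hence an equivalence. So every fiber carries the factorization system with ingressive part the equivalences, i.e. the trivial system $(\E_c)^{\sharp}$. That collection is automatically transport-stable, since the transports preserve all morphisms and preserve equivalences. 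Therefore there is at most one curved orthofibration structure on $p$, which gives injectivity.

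For the essential image, necessity is clear: a curved orthofibration is an ambifibration, so it admits the required cocartesian and cartesian lifts. Conversely, given such a $p$, I apply Proposition \ref{FactorizationSystem} with the trivial systems $(\E_c)^{\sharp}$ to obtain an ambifibration $\E^{\dagger}\rightarrow\C^{\dagger}$. Its ingressive morphisms are composites of a fiberwise ingressive morphism, here a fiberwise equivalence, with a $p$-cartesian lift. Such composites are $p$-cartesian, so $p$ is a curved orthofibration.

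The main obstacle is the careful check that the second map is a monomorphism on the slice cores, i.e. that the fibers of $(\Fact_{/\C^{\dagger}})^{\simeq}\rightarrow(\Cat_{/\C})^{\simeq}$ really are the discrete sets described above. I would handle this by writing slice cores as fibers over $\Fact^{\simeq}$ and $\Cat^{\simeq}$ and invoking Proposition \ref{FaithfulIsTruncatedOnCore}.
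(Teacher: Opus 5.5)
Your proposal is correct and follows essentially the same route as the paper. You factor through $(\Fact_{/\C})^{\simeq}$, get $0$-truncatedness from faithfulness of $p\colon\Fact\rightarrow\Cat$ via Proposition \ref{FaithfulIsTruncatedOnCore}, prove there is at most one curved orthofibration structure, and get the essential image from Proposition \ref{FactorizationSystem} with the trivial fiberwise systems $(\E_c)^{\sharp}$, which you make explicit where the paper leaves it implicit. The only difference is in the uniqueness step: the paper argues directly that the ingressive morphisms must be exactly the $p$-cartesian lifts of ingressive morphisms of $\C^{\dagger}$, which determines the factorization system, whereas you pass through Theorem \ref{FactorizationSystemNaturality} and show the fiberwise systems are forced to be $(\E_c)^{\sharp}$ --- slightly heavier, but equally valid.
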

\begin{proof}[Proof.]
The functor $p\colon\Fact\rightarrow\Cat$ is faithful by Proposition \ref{ForgetfulFunctors}, hence so is $\Fact_{/\C}\rightarrow\Cat_{/\C}$ and thus $(\Fact_{/\C})^{\simeq}\rightarrow(\Cat_{/\C})^{\simeq}$ is $0$-truncated by Proposition \ref{FaithfulIsTruncatedOnCore}. Restricting the source to the full sub-anima $\Ortho(\C)^{\simeq}$, we observe that there is at most one factorization system $\E^{\dagger}$ on $\E$ such that $p\colon\E^{\dagger}\rightarrow\C^{\dagger}$ is a curved orthofibration, because the ingressive morphisms in $\E^{\dagger}$ are precisely the cartesian lifts of ingressive morphisms in $\C^{\dagger}$ and this determines the factorization system. Thus, the forgetful map is $(-1)$-truncated, i.e. a monomorphism, and the characterization of the essential image follows by Proposition \ref{FactorizationSystem}.
\end{proof}

\begin{theorem}[Un/Straightening for Factorization Systems]\label{UnStraightening}
Let $\C^{\dagger}$ be a category equipped with a factorization system. Then there is a natural equivalence
\begin{equation*}
\Ambi(\C^{\dagger})^{\simeq}\simeq\Map_{\DCat}(\iota(\C^{\dagger})^{v-\op},\bbFact).
\end{equation*}
\end{theorem}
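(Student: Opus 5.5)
We prove the equivalence by lifting Juran's equivalence (Theorem \ref{JuranUnStraightening}) along the forgetful maps on both sides, and then showing that the two resulting fiberwise descriptions agree. Both lifting problems are discrete, so it will suffice to match two sets.

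First we set up a comparison square
\begin{equation*}
\begin{tikzcd}
\Ambi(\C^{\dagger})^{\simeq}\arrow[r,dashed]\arrow[d] & \Map_{\DCat}(\iota(\C^{\dagger})^{v-\op},\bbFact)\arrow[d,"p_{\circ}"]\\
\Ortho(\C^{\dagger})^{\simeq}\arrow[r,"\simeq"] & \Map_{\DCat}(\iota(\C^{\dagger})^{v-\op},\bbCat).
\end{tikzcd}
\end{equation*}
The left vertical map forgets the factorization system on $\E$. By Proposition \ref{ForgetOrtho}, an ambifibration $p\colon\E^{\dagger}\to\C^{\dagger}$ has an underlying functor $\E\to\C$ which admits cocartesian resp. cartesian lifts of egressive resp. ingressive morphisms. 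This functor is therefore equivalently a curved orthofibration, namely the one equipped with the factorization system of Proposition \ref{FactorizationSystem} with trivial fiberwise factorization systems.

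Next we analyze the two vertical maps via their fibers.

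\begin{itemize}
\item \textbf{Left map.} Its fiber over $\E\to\C$ is, by Theorem \ref{FactorizationSystemNaturality}, the discrete set of transport-stable collections of factorization systems on the fibers $\E_c$.
\item \textbf{Right map.} The forgetful double functor $p\colon\bbFact\to\bbCat$ is locally fully faithful (Proposition \ref{ForgetDoubleFunctor}), in particular faithful. Theorem \ref{FaithfulLifting} with $n=2$ then says that the fiber of $p_{\circ}$ over a double functor $F$ is the discrete set of families $(\E_c^{\dagger})_{c}$ of factorization systems on $F(c)$ satisfying a lifting condition. Since $p$ is $1$-faithful, this condition only needs checking for $\underline{\varepsilon}=(1,0)$ and $(0,1)$, that is, on horizontal and vertical morphisms.
\end{itemize}

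For a horizontal morphism of $\iota(\C^{\dagger})^{v-\op}$, i.e. an egressive $g$, the condition says that the image $F(g)=g_{\#}$ preserves egressive morphisms. For a vertical morphism, i.e. an ingressive $f$, it says that $f^{\ast}$ preserves ingressive morphisms. We also need $\pi_0$ of the objects of $\iota(\C^{\dagger})$ to agree with objects of $\C$, which holds by construction. Both fibers are therefore the same set of transport-stable families.

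The main obstacle is making this fiberwise identification honest, i.e. natural over the equivalence of Theorem \ref{JuranUnStraightening}. Concretely, we must check that Juran's straightening sends an object $c$ to the fiber $\E_c$, a horizontal morphism to cocartesian transport $g_{\#}$, and a vertical morphism to cartesian transport $f^{\ast}$. I would extract this from Juran's construction by restricting to $[1]^{\sharp}\times[0]^{\flat}$ and $[0]^{\sharp}\times[1]^{\flat}$ and using the compatibility of his equivalence with ordinary cocartesian/cartesian straightening along $\C_{\eg}$ and $\C_{\ing}$.

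Granting this, we obtain the equivalence as follows. Both vertical maps are $0$-truncated, the bottom map is an equivalence, and the fibers are identified compatibly. Hence there is a unique dashed map making the square commute, and it is an equivalence; equivalently, $\Ambi(\C^{\dagger})^{\simeq}$ is the pullback of the right map along the bottom equivalence. Naturality in $\C^{\dagger}$ is inherited from Juran's equivalence, since the lifting conditions are preserved under pullback along functors in $\Fact$.
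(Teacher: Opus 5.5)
Your proposal is correct and takes essentially the same route as the paper. Both proofs lift Juran's equivalence along two $0$-truncated forgetful maps and match the fibers via Theorem \ref{FaithfulLifting} (using that $\bbFact\to\bbCat$ is locally fully faithful) and Theorem \ref{FactorizationSystemNaturality}. The paper builds the comparison map in the opposite direction, $\Map_{\DCat}(\iota(\C^{\dagger})^{v-\op},\bbFact)\to\Ambi(\C^{\dagger})^{\simeq}$, explicitly via Proposition \ref{CoveringSpaceTheory}; that is where your ``fibers identified compatibly'' is discharged. The paper also leaves implicit the identification of Juran's straightening on objects and on horizontal and vertical morphisms, which you explicitly flag.
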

\begin{proof}[Proof.]
To prove this theorem, we consider the square
\begin{equation}\label{UnStraighteningSquare}
\begin{tikzcd}
\Map_{\DCat}(\iota(\C^{\dagger})^{v-\op},\bbFact)\arrow[r,dotted,"\simeq"]\arrow[d] & \Ambi(\C^{\dagger})^{\simeq}\arrow[d]\\
\Map_{\DCat}(\iota(\C^{\dagger})^{v-\op},\bbCat)\arrow[r,"\simeq"] & \Ortho(\C^{\dagger})^{\simeq}.
\end{tikzcd}
\end{equation}
The bottom un/straightening equivalence is Juran's Theorem \ref{JuranUnStraightening}. The left vertical map is induced by the (locally fully) faithful functor $p\colon\bbFact\rightarrow\bbCat$ and hence $0$-truncated by Corollary \ref{FaithfulIsTruncated}. The right vertical map is induced by the diagram
\begin{equation*}
\begin{tikzcd}
& \Ortho(\C^{\dagger})^{\simeq}\arrow[d,hookrightarrow]\\
\Ambi(\C^{\dagger})^{\simeq}\arrow[ur,dotted]\arrow[r] & (\Cat_{/\C})^{\simeq},
\end{tikzcd}
\end{equation*}
in which the right vertical map is a monomorphism by Proposition \ref{ForgetOrtho} and the dotted lifting exists by the characterization of the essential image. This map is also $0$-truncated by the argument of Proposition \ref{ForgetOrtho}. To construct the dotted map in $(\ref{UnStraighteningSquare})$, we can thus use Proposition \ref{CoveringSpaceTheory}. The lifts on objects are specified by Proposition \ref{FactorizationSystem} and satisfy the naturality condition by construction.

To then prove the dotted map is an equivalence, we can check that the square $(\ref{UnStraighteningSquare})$ induces equivalences on vertical fibers. The left-hand fiber is described by Theorem \ref{FaithfulLifting} in light of Proposition \ref{ForgetDoubleFunctor} as the set of collections of transport-stable factorization systems on the fibers of the corresponding fibration. The induced equivalence is then precisely the content of Theorem \ref{FactorizationSystemNaturality}.
\end{proof}

\begin{remark}
This un/straightening equivalence projects onto Juran's un/straightening equivalence Theorem \ref{JuranUnStraightening} by construction. This projection also admits a section given by the inclusion $\Ortho(\C^{\dagger})^{\simeq}\hookrightarrow\Ambi(C^{\dagger})^{\simeq}$ resp. the section $(-)^{\sharp}\colon\bbCat\hookrightarrow\bbFact$ induced by the diagram
\begin{equation*}
\begin{tikzcd}
\bbCat\arrow[r,dotted,"(-)^{\sharp}"]\arrow[d] & \bbCat\times_{T\Cat^{\simeq}}T\Fact^{\simeq}\arrow[r]\arrow[d]\pb & \bbCat\arrow[d]\\
T\Cat^{\simeq}\arrow[r,"(-)^{\sharp}"] & T\Fact^{\simeq}\arrow[r] & T\Cat^{\simeq},
\end{tikzcd}
\end{equation*}
in which the horizontal composite is the identity. This indeed factors through $\bbFact\hookrightarrow\bbCat\times_{T\Cat^{\simeq}}T\Fact^{\simeq}$ since any functor of categories preserves the trivial factorization system $(-)^{\sharp}$.
\end{remark}

In the case where $\C$ is equipped with a trivial factorization system, we obtain a stronger un/straightening equivalence, which also encodes lax phenomena. To state this concisely, we use the formalism of decorated $2$-categories \cite[Sections 2.6-2.7]{AHM}.

\begin{notation}
A \textit{decorated} $2$-category $\bm{\C}^{\diamond}$ consists of an underlying $2$-category $\bm{\C}$ as well as collections of $1$- and $2$-morphisms in $\bm{\C}$, either stable under composition and containing the invertible ones. A decorated functor is a functor of $2$-categories preserving the decorated $1$- and $2$-morphisms respectively.

For two decorated $2$-categories $\bm{\C}^{\diamond}$ and $\bm{\D}^{\diamond}$, we denote by $\bfFun^{(\op)\lax}(\bm{\C}^{\diamond},\bm{\D}^{\diamond})^{\diamond}$ the locally full subcategory of $\bfFun^{(\op)\lax}(\bm{\C},\bm{\D})$ on the decorated functors $\bm{\C}^{\diamond}\rightarrow\bm{\D}^{\diamond}$ and those (op)lax natural transformations whose (op)lax naturality squares at any decorated $1$-morphism in $\bm{\C}^{\diamond}$ are filled by a decorated $2$-morphism in $\bm{\D}^{\diamond}$. This category is decorated by those (op)lax natural transformations all of whose (op)lax naturality squares are filled by a decorated $2$-morphism in $\bm{\D}^{\diamond}$ and those modifications all of whose $2$-morphism components are decorated in $\bm{\D}^{\diamond}$.
\end{notation}

\begin{definition}
The $2$-category $\bfFact^{\eg}$ is upgraded to a decorated $2$-category $\bfFact^{\eg,\diamond}$ by regarding all $1$-morphisms as well as those $2$-morphisms corresponding to natural transformations all of whose components are egressive morphisms as decorated. This induces in particular a decoration $\bfAmbi^{\eg,\lax}(\C^{\dagger})^{\diamond}$ on $\bfAmbi^{\eg,\lax}(\C^{\dagger})$ for any category $\C^{\dagger}$ equipped with a factorization system.

The maximal subcategories on all the decorated $1$- and $2$-morphisms, which are thus wide and locally wide, are denoted $\bfFact^{\eg,\eg}$ resp. $\bfAmbi^{\eg,\eg,\lax}(\C^{\dagger})$.
\end{definition}

\begin{remark}
The $2$-categories $\bfFact^{\eg}$ resp. $\bfFact^{\eg,\eg}$ are both tensored over $\Cat$, by $(\C,\D^{\dagger})\mapsto\C^{\flat}\times\D^{\dagger}$ resp. $(\C,\D^{\dagger})\mapsto\C^{\sharp}\times\D^{\dagger}$.
\end{remark}

\begin{theorem}\label{UnStraighteningEg}
Let $\C$ be a category. Then there is a natural equivalence of decorated $2$-categories
\begin{equation*}
\bfAmbi^{\eg,\lax}(\C^{\sharp})^{\diamond}\simeq\bfFun^{\lax}\br{\C^{\sharp},\bfFact^{\eg,\diamond}}^{\diamond}.
\end{equation*}
\end{theorem}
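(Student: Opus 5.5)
We lift a known $2$-categorical un/straightening equivalence through the locally fully faithful forgetful functors, in the same way as in Theorem \ref{UnStraightening}. When $\C$ carries the trivial factorization system $\C^{\sharp}$, every morphism is egressive and only equivalences are ingressive. An ambifibration over $\C^{\sharp}$ is therefore a functor $p\colon\E^{\dagger}\to\C^{\sharp}$ in $\Fact$ that is a cocartesian fibration whose cocartesian morphisms are egressive; the condition on ingressive lifts is vacuous. The base case is the lax cocartesian un/straightening equivalence of $2$-categories
\begin{equation*}
\bfCocart^{\lax}(\C)\simeq\bfFun^{\lax}(\C,\bfCat).
\end{equation*}
Here the left side is the full subcategory of $\bfCat_{/\C}$ on cocartesian fibrations, with all functors over $\C$ and natural transformations over $\C$. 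Its strict variant matches functors preserving cocartesian edges with strict natural transformations. This is the decorated version of \cite{AHM} with the decoration on $\bfCat$ consisting of all $1$-morphisms and the invertible $2$-morphisms.

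First, we build the comparison as a square. On top sit $\bfAmbi^{\eg,\lax}(\C^{\sharp})\to\bfFun^{\lax}(\C^{\sharp},\bfFact^{\eg})$; below sit $\bfCocart^{\lax}(\C)\simeq\bfFun^{\lax}(\C,\bfCat)$; the vertical maps forget factorization systems. Both vertical maps are locally fully faithful $2$-functors. For the left one this holds because $p^{\eg}\colon\bfFact^{\eg}\to\bfCat$ is locally fully faithful, so the slices and the full subcategories of ambifibrations inherit this. For the right one, postcomposition with a locally fully faithful functor is locally fully faithful on lax functor categories. By Theorem \ref{FaithfulLifting}, applied with $n=2$ and $k=1$ and in the $(\infty,2)$-categorical version, a lift of the bottom equivalence to the top exists once we check that the lifting conditions match on objects and on $1$-morphisms.

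On objects, Theorem \ref{FactorizationSystemNaturality} with $\C^{\sharp}$ identifies ambifibrations lifting a cocartesian fibration $\E\to\C$ with families of factorization systems on the fibers such that every cocartesian transport $f_{\#}$ preserves egressive morphisms. Functors $\C\to\bfFact^{\eg}$ lifting the straightening are the same data, by Theorem \ref{FaithfulLifting} and Proposition \ref{ForgetfulFunctors}. On $1$-morphisms, let $F\colon\E\to\E'$ be a functor over $\C$. It lies in $\bfFact^{\eg}$, meaning it preserves egressive morphisms, if and only if each fiber functor $F_c$ preserves egressives. This uses the description in Proposition \ref{FactorizationSystem}: an egressive morphism is a cocartesian lift followed by a fiberwise egressive one. $F$ sends the fiberwise egressive part to a fiberwise egressive morphism. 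It sends the cocartesian part to a composite of a cocartesian morphism and the lax comparison map $f_{\#}F_c x\to F_{c'}f_{\#}x$, which lies in a fiber. The main obstacle is that this comparison map need not be egressive. So egressiveness of $F$ forces a condition on the lax naturality components, not only on the components $F_c$. This is exactly why the decoration enters: we expect $F$ preserves egressives if and only if all $F_c$ do and all lax naturality squares are filled by egressive transformations. This is the decorated condition defining $\bfFun^{\lax}(\C^{\sharp},\bfFact^{\eg,\diamond})^{\diamond}$ on $1$-morphisms, using that all $1$-morphisms of $\C^{\sharp}$ are decorated. To prove the ``only if'' direction, apply $F$ to the egressive cocartesian morphism $x\to f_{\#}x$, then use right cancellation against the egressive cocartesian morphism $F x\to f_{\#}Fx$.

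Finally, we match the decorations. A $1$-morphism on the left is decorated if it preserves cocartesian egressives. This holds if and only if the comparison maps are equivalences, that is, the transformation is strict. A $2$-morphism on the left is decorated if its components are egressive. In $\E'$, a morphism inside a fiber is egressive if and only if it is fiberwise egressive. So these correspond to modifications with egressive components, and lax transformations with egressive squares at every morphism correspond to functors whose comparison maps are all egressive. All these conditions are properties checked on the underlying data of the bottom equivalence, so the lifted equivalence is an equivalence of decorated $2$-categories. Naturality in $\C$ is inherited from the bottom equivalence.
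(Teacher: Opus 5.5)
Your construction of the underlying equivalence follows essentially the paper's route. You use the lax cocartesian un/straightening $\bfCocart^{\lax}(\C)\simeq\bfFun^{\lax}(\C,\bfCat)$ and the local full faithfulness of both forgetful functors. Objects are matched via Theorem \ref{FactorizationSystemNaturality}. For $1$-morphisms, you argue by composition and right cancellation that $F$ preserves egressive morphisms iff all $F_c$ do and all comparison maps $f_{\#}F_cx\to F_{c'}f_{\#}x$ are egressive. The only organizational difference is where Theorem \ref{FaithfulLifting} is applied. The paper applies it to $p^{\eg}\colon\bfFact^{\eg}\to\bfCat$ on the curried functor out of $\C\boxtimes\bfAmbi^{\eg,\lax}(\C^{\sharp})$, checking only the generators from Proposition \ref{GrayRudimentary}. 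You apply it directly to postcomposition on $\bfFun^{\lax}$. Two clarifications are needed. First, delete your initial claim that $F$ preserves egressives iff each $F_c$ does; as you then notice yourself, it is false. Second, for the mere existence of the lift only the components $F_c$ matter, since lax $2$-cells lift automatically by local full faithfulness. The condition on comparison maps is what identifies the image as the wide, locally full subcategory underlying $\bfFun^{\lax}(\C^{\sharp},\bfFact^{\eg,\diamond})^{\diamond}$.

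The genuine error is in the decoration step. The decoration on $\bfAmbi^{\eg,\lax}(\C^{\sharp})^{\diamond}$ is induced from $\bfFact^{\eg,\diamond}$, in which \emph{every} $1$-morphism is decorated. Functors preserving cocartesian egressive morphisms form the subcategory $\bfAmbi^{\eg}(\C^{\sharp})$, which belongs to Corollary \ref{UnStraighteningEgNonLax}; they are not the decorated $1$-morphisms. Under your reading the statement would actually fail. Every $1$-morphism of $\C^{\sharp}$ is decorated, so a lax transformation is a $1$-morphism of $\bfFun^{\lax}(\C^{\sharp},\bfFact^{\eg,\diamond})^{\diamond}$ exactly when it is a decorated one; hence all $1$-morphisms on the right are decorated. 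A functor over $\C$ whose comparison maps are egressive but not invertible would then be decorated on the right but undecorated on the left. Your own final sentences already match the right-hand decorated $1$-morphisms with all functors having egressive comparison maps, i.e. with all $1$-morphisms on the left. That contradicts your sentence identifying the left-hand decorated ones with strict transformations. With the correct definition, the $1$-morphism part of the decoration check is vacuous. Only your (correct) $2$-morphism matching remains: components of a transformation over $\C$ lie in fibers, where being egressive in $\E'$ coincides with being fiberwise egressive.
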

\begin{remark}
By Proposition \ref{CocartesianFact}, the objects of $\bfAmbi^{\eg,\lax}(\C^{\sharp})^{\diamond}$ are equivalently cocartesian fibrations $\E\rightarrow\C$ in $\Cat$ together with a factorization system on $\E$ in which every $p$-cocartesian morphism is egressive.
\end{remark}
\begin{proof}
To this end, consider the lifting problem
\begin{equation*}
\begin{tikzcd}
& & \bfFact^{\eg}\arrow[d,"p^{\eg}"']\\
\C\boxtimes\bfAmbi^{\eg,\lax}(\C^{\sharp})\arrow[r]\arrow[urr,dotted] & \C\boxtimes\bfCocart^{\lax}(\C)\arrow[r] & \bfCat.
\end{tikzcd}
\end{equation*}
The second horizontal functor is curried from the un/straightening equivalence \cite[Theorem E]{HHNL}. To produce the dotted lift, we can use Theorem \ref{FaithfulLifting} as $p^{\eg}\colon\bfFact^{\eg}\rightarrow\bfCat$ is locally fully faithful. Thus, we need to specify lifts of the objects and verify a compatibility condition for $1$-morphisms. To do so, we recall the description of the Gray tensor product of Proposition \ref{GrayRudimentary}. For an object $c\in\C$ and an ambifibration $p\colon\E^{\dagger}\rightarrow\C^{\sharp}$, the horizontal composite maps $c\boxtimes p\mapsto\E_c$ and we specify its lift as the factorization system $\E_c^{\dagger}$ obtained from the factorization system $\E^{\dagger}$ by restriction (cf. Proposition \ref{CatFactLimits}).

For any morphism $f\colon c\rightarrow d$ in $\C$ and ambifibration $p\colon\E^{\dagger}\rightarrow\C^{\sharp}$, the horizontal composite maps $f\boxtimes p$ to the cocartesian transport $f_{\#}\colon\E_c\rightarrow\E_d$, which lifts to a functor $f_{\#}\colon\E_c^{\dagger}\rightarrow\E_d^{\dagger}$ preserving egressive morphisms by the argument of Theorem \ref{FactorizationSystemNaturality}. For any object $c\in\C$ and functor $\alpha\colon\E^{\dagger}\rightarrow\F^{\dagger}$ over $\C^{\sharp}$ preserving egressive morphisms, the horizontal composite maps $c\boxtimes\alpha$ to the restricted functor $\alpha_c\colon\E_c^{\dagger}\rightarrow\F_c^{\dagger}$, which clearly preserves egressive morphisms. This produces the desired lift.

Then, currying back, we obtain the upper horizontal functor in the diagram
\begin{equation}\label{UnStraighteningLaxSquare}
\begin{tikzcd}
\bfAmbi^{\eg,\lax}(\C^{\sharp})\arrow[r]\arrow[d] & \bfFun^{\lax}\br{\C,\bfFact^{\eg}}\arrow[d]\\
\bfCocart^{\lax}(\C)\arrow[r,"\sim"] & \bfFun^{\lax}\br{\C,\bfCat}.
\end{tikzcd}
\end{equation}
The left vertical functor is obtained by restricting the locally fully faithful functor $\bfFact^{\eg}_{/\C^{\sharp}}\rightarrow\bfCat_{/\C}$ to full subcategories, hence it is locally fully faithful. The right vertical functor is locally fully faithful since $p^{\eg}\colon\bfFact^{\eg}\rightarrow\bfCat$ is locally fully faithful (e.g. by \cite[Theorem 2.7.5]{AGH}). By left cancellation, the top horizontal functor is locally fully faithful. It is also essentially surjective by Proposition \ref{FactorizationSystem}.

Furthermore, let $F,G\colon\C\rightarrow\bfFact^{\eg}$ be two functors and $\alpha\colon F\Rightarrow G$ a natural transformation. Then the functor $\Un(\alpha)\colon\Un(F)\rightarrow\Un(G)$ over $\C$ obtained by unstraightening the underlying natural transformation upgrades to a morphism in $\bfFact^{\eg}$ if and only if it maps every $p$-cocartesian morphism in $\Un(F)$ to an egressive morphism in $\Un(G)$ as it already preserves fiberwise egressive morphisms by construction. For a $p$-cocartesian morphism $f\colon x\rightarrow p(f)_{\#}x$ in $\Un(F)$, the canonical map $p(f)_{\#}\Un(\alpha)x\rightarrow\Un(\alpha)p(f)_{\#}x$ is precisely the laxness constraint of $\alpha$ at $f$, so right cancellation implies that $\Un(\alpha)f\colon\Un(\alpha)x\rightarrow\Un(\alpha)p(f)_{\#}x$ is egressive if and only this laxness constraint is egressive.

In total, this implies that the upper horizontal functor in $(\ref{UnStraighteningLaxSquare})$ is an equivalence onto the wide and locally full subcategory of $\bfFun^{\lax}(\C,\bfFact^{\eg})$ that underlies $\bfFun^{\lax}\br{\C^{\sharp},\bfFact^{\eg,\diamond}}^{\diamond}$. It remains to verify that this equivalence identifies the corresponding decorations, which is vacuous on $1$-morphisms and a consequence of how the un/straightening equivalence acts on modifications on $2$-morphisms.
\end{proof}
\begin{remark}
There is, of course, a dual version $\bfAmbi^{\ing,\oplax}(\C^{\flat})^{\diamond}\simeq\bfFun^{\oplax}\br{\C^{\sharp},\bfFact^{\ing,\diamond}}^{\diamond}$ (note that one marking changes and the other does not).
\end{remark}

\begin{corollary}\label{UnStraighteningEgNonLax}
Let $\C$ be a category. Then there is a natural equivalence of $2$-categories
\begin{equation*}
\bfAmbi^{\eg}(\C^{\sharp})\simeq\bfFun(\C,\bfFact^{\eg}).
\end{equation*}
\end{corollary}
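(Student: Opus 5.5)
The plan is to deduce Corollary \ref{UnStraighteningEgNonLax} from Theorem \ref{UnStraighteningEg} by passing to suitable wide, locally full subcategories on both sides. On the left, $\bfAmbi^{\eg}(\C^{\sharp})$ is by definition the wide, locally full subcategory of $\bfAmbi^{\eg,\lax}(\C^{\sharp})$ on the functors over $\C^{\sharp}$ that preserve cocartesian egressive morphisms and cartesian ingressive morphisms. Since the base carries the trivial factorization system $\C^{\sharp}$, the only ingressive morphisms of $\C$ are equivalences. Their cartesian lifts are equivalences, so the second condition is automatic. The subcategory is therefore cut out by preservation of $p$-cocartesian morphisms alone.

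On the right, $\bfFun(\C,\bfFact^{\eg})$ is the wide, locally full subcategory of $\bfFun^{\lax}(\C^{\sharp},\bfFact^{\eg,\diamond})^{\diamond}$ on the strong natural transformations, i.e. those whose lax naturality squares are filled by invertible $2$-morphisms. I first check that all functors $\C\rightarrow\bfFact^{\eg}$ appear as objects: every $1$-morphism of $\bfFact^{\eg}$ is decorated, so any functor out of $\C^{\sharp}$ is decorated. I also check that strong transformations lie in the decorated-locally-full subcategory: invertible natural transformations have invertible, hence egressive, components, so their naturality squares are filled by decorated $2$-morphisms. Modifications are unrestricted in both cases.

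It then suffices to show that the equivalence of Theorem \ref{UnStraighteningEg} identifies these two classes of $1$-morphisms. Its proof shows that the straightening of a functor $\Un(\alpha)$ has, at $f\colon c\rightarrow d$, the laxness constraint given by the canonical map $f_{\#}\Un(\alpha)x\rightarrow\Un(\alpha)f_{\#}x$. This constraint is invertible for all $x$ and $f$ exactly when $\Un(\alpha)$ preserves $p$-cocartesian morphisms. This is the standard compatibility of un/straightening \cite[Theorem E]{HHNL} with strict versus lax transformations, and it is already used in the proof of Theorem \ref{UnStraighteningEg}. Because both subcategories are wide and locally full, and the equivalence matches objects, $1$-morphisms and their lax constraints, it restricts to an equivalence between them.

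I expect the main obstacle to be this identification of the laxness constraint with the comparison map for cocartesian transport, together with the fact that it is invertible precisely for cocartesian-preserving functors. Here I would simply invoke the non-lax version of \cite[Theorem E]{HHNL}: the equivalence $\bfCocart^{\lax}(\C)\simeq\bfFun^{\lax}(\C,\bfCat)$ restricts to $\bfCocart(\C)\simeq\bfFun(\C,\bfCat)$. I would then pull this restriction back along the locally fully faithful vertical functors in the square (\ref{UnStraighteningLaxSquare}).
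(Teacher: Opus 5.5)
Your proposal is correct and follows essentially the paper's route. Both arguments pull back the equivalence of Theorem \ref{UnStraighteningEg} along the non-lax un/straightening $\bfCocart(\C)\simeq\bfFun(\C,\bfCat)$: the left side becomes the cocartesian-preserving functors by definition, and the right side becomes the strong transformations because $p^{\eg}\colon\bfFact^{\eg}\rightarrow\bfCat$ is locally fully faithful, hence conservative on $2$-cells. Your additional check that strong transformations already lie in the decorated subcategory $\bfFun^{\lax}(\C^{\sharp},\bfFact^{\eg,\diamond})^{\diamond}$ is a point the paper leaves implicit.
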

\begin{proof}[Proof.]
This follows from Theorem \ref{UnStraighteningEg} by pulling back to the (non-lax) un/straightening equivalence
\begin{equation*}
\bfCocart(\C)\iso\bfFun(\C,\bfCat).
\end{equation*}
Indeed, $\bfFun^{\lax}(\C,\bfFact^{\eg})$ pulls back to $\bfFun(\C,\bfFact^{\eg})$ since the forgetful functor $p^{\eg}\colon\bfFact^{\eg}\rightarrow\bfCat$ is locally fully faithful and thus conservative on $2$-cells and $\bfAmbi^{\eg,\lax}(\C^{\sharp})$ pulls back to $\bfAmbi^{\eg}(\C^{\sharp})$ by definition.
\end{proof}

\begin{corollary}
Let $\C$ be a category. Then there is a natural equivalence of $2$-categories
\begin{equation*}
\bfAmbi^{\eg,\eg,\lax}(\C^{\sharp})\simeq\bfFun^{\lax}(\C,\bfFact^{\eg,\eg}).
\end{equation*}
\end{corollary}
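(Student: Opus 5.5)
The plan is to derive this from Theorem \ref{UnStraighteningEg} by passing to maximal decorated subcategories on both sides, in the same way Corollary \ref{UnStraighteningEgNonLax} was obtained by restricting. The equivalence of Theorem \ref{UnStraighteningEg} is an equivalence of decorated $2$-categories. Any such equivalence restricts to an equivalence between the wide and locally wide subcategories spanned by decorated $1$- and $2$-morphisms, since these are defined intrinsically from the decoration. So it suffices to identify both sides.

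On the left, the maximal decorated subcategory of $\bfAmbi^{\eg,\lax}(\C^{\sharp})^{\diamond}$ is $\bfAmbi^{\eg,\eg,\lax}(\C^{\sharp})$ by definition: the decoration is inherited from $\bfFact^{\eg,\diamond}$. Its decorated $1$-morphisms are all functors over $\C^{\sharp}$ preserving egressive morphisms. Its decorated $2$-morphisms are natural transformations with egressive components.

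On the right, I unwind the decoration on $\bfFun^{\lax}(\C^{\sharp},\bfFact^{\eg,\diamond})^{\diamond}$.
\begin{itemize}
\item \textbf{Objects.} These are decorated functors $\C^{\sharp}\to\bfFact^{\eg,\diamond}$. Every $1$-morphism of $\C^{\sharp}$ is decorated and every $1$-morphism of $\bfFact^{\eg}$ is decorated. The $2$-morphisms of $\C$ are trivial, and identities are decorated. So these are all functors $\C\to\bfFact^{\eg}$, i.e. the same objects as $\bfFun^{\lax}(\C,\bfFact^{\eg,\eg})$, because $\bfFact^{\eg}$ and $\bfFact^{\eg,\eg}$ share objects and $1$-morphisms.
\item \textbf{Decorated $1$-morphisms.} These are lax transformations all of whose laxness squares are filled by decorated $2$-cells, i.e. by natural transformations with egressive components. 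This is exactly a lax transformation in $\bfFun^{\lax}(\C,\bfFact^{\eg,\eg})$, once one knows $\bfFact^{\eg,\eg}\to\bfFact^{\eg}$ is locally a subcategory inclusion. That holds since it is locally full on objects of Hom-categories and a monomorphism on mapping anima.
\item \textbf{Decorated $2$-morphisms.} These are modifications with decorated components, which are again the modifications in $\bfFun^{\lax}(\C,\bfFact^{\eg,\eg})$.
\end{itemize}

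The only real point, and the main obstacle, is to make rigorous that $\bfFun^{\lax}(\C,-)$ applied to the wide, locally wide inclusion $\bfFact^{\eg,\eg}\hookrightarrow\bfFact^{\eg}$ yields exactly this subcategory. The components of modifications need to lie in $\bfFact^{\eg,\eg}$, while lax squares are $2$-cells.

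I would argue via the Gray tensor adjunction. A lax transformation or modification corresponds to a functor out of $[1]\boxtimes\C$ or $[1]\boxtimes[1]\boxtimes\C$ (appropriately). The inclusion $\bfFact^{\eg,\eg}\hookrightarrow\bfFact^{\eg}$ is faithful in the sense of Definition \ref{FaithfulSegal}, being a monomorphism on Hom-categories. So Theorem \ref{FaithfulLifting} (with $n=2$, for $2$-categories) says such a functor factors, uniquely, iff its $1$- and $2$-cells land in the subcategory. By Proposition \ref{GrayRudimentary}, the $1$-cells of the Gray product are generated by the components and the images of morphisms of $\C$. The generating $2$-cells are the laxness squares, together with modification components. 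Hence the factorization condition is exactly the decoration condition, which identifies mapping objects and gives the claimed equivalence.
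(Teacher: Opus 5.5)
Your proposal is correct and follows the paper's proof. The paper simply restricts the decorated equivalence of Theorem \ref{UnStraighteningEg} to the maximal subcategories on decorated $1$- and $2$-morphisms; your unwinding via the Gray adjunction and Theorem \ref{FaithfulLifting} spells out the identification of the right-hand side, which the paper treats as evident. Two small corrections: Proposition \ref{GrayRudimentary} only describes objects and $1$-morphisms, so your claim about the generating $2$-cells of the relevant Gray products needs the same density argument run one level up, and the inclusion $\bfFact^{\eg,\eg}\hookrightarrow\bfFact^{\eg}$ is \emph{locally wide}, not \emph{locally full}.
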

\begin{proof}[Proof.]
This follows from Theorem \ref{UnStraighteningEg} by passing to the maximal subcategories on all the decorated $1$- and $2$-morphisms.
\end{proof}

\begin{remark}
The general paradigm is that any categorical un/straightening equivalence can be refined to an un/straightening equivalence for factorization systems by using this fairly robust proof strategy. The un/straightening equivalence for double categories \cite[Theorem B]{Juran} is proven at the level of anima, but it should be expected that this arises from an un/straightening equivalence at the level of double categories itself. The above paradigm would then upgrade Theorem \ref{UnStraightening} to an equivalence of double categories
\begin{equation*}
\bbAmbi(\C^{\dagger})\simeq\bbFun(\iota(\C^{\dagger})^{v-\op},\bbFact),
\end{equation*}
where $\bbFun$ denotes the internal $\Hom$ in $\DCat$ and $\bbAmbi(\C)$ is the double category such that:
\begin{itemize}
\item The objects are ambifibration $p\colon\E^{\dagger}\rightarrow\C^{\dagger}$ in $\Fact$.
\item The horizontal resp. vertical morphisms are functors over $\C$ that preserve cocartesian egressive morphisms and cartesian ingressive morphisms as well as arbitrary egressive resp. ingressive morphisms.
\item The squares are appropriate oplax commutative squares over $\C$.
\end{itemize}
We note that this prediction is consistent with Corollary \ref{UnStraighteningEgNonLax}. Indeed, we have $\iota(\C^{\sharp})=\C_h$ and $(-)_h\simeq(-)^{v-\op}\circ(-)_h$, so that $\iota(\C^{\sharp})^{v-\op}\simeq\C_h$ and
\begin{equation*}
\Hor(\bbAmbi(\C^{\sharp}))\simeq\bfAmbi(\C^{\sharp})\simeq\bfFun(\C,\bfFact^{\eg})\simeq\bfFun(\C,\Hor(\bbFact))\simeq\Hor(\bbFun(\C_h,\bbFact)).
\end{equation*}
\end{remark}

\section{Monadicity}\label{MonadicitySection}

In this section, we prove Theorem \ref{MonadicityIntro} that the forgetful functor $p\colon\Fact\rightarrow\Cat$ is monadic, extending a result of Korostenski-Tholen \cite[Theorem B]{KorostenskiTholen}. This result has not yet appeared in the literature to our knowledge, though it can be deduced in a roundabout fashion. Namely, Kositsyn \cite{Kositsyn} exhibits a category $\Delta_{\Fact}^{\op}$ such that \singlequote{Segal objects}\footnote{Though we stress that these are \textit{not} Segal objects in the sense of an algebraic pattern \cite{ChuHaugseng}.} $\Delta_{\Fact}^{\op}\rightarrow\C$ are equivalent to ordinary Segal objects in $\C$ equipped with a factorization system, and such that the forgetful functor is induced by a map $t\colon\Delta^{\op}\rightarrow\Delta_{\Fact}^{\op}$. This can be used to check that the free-forgetful adjunction is monadic by an argument along the lines of \cite[Proposition 8.1]{ChuHaugseng}, which then restricts to subcategories of complete objects. Instead, we will present a direct proof based on an application of the Barr-Beck-Lurie monadicity theorem.

The following lemma appears to be folklore.

\begin{lemma}\label{CoinitialLeft}
Let $f\colon\I\rightarrow\J$ be a coinitial functor and $p\colon\E\rightarrow\B$ a left fibration. Then the square
\begin{equation*}
\begin{tikzcd}
\Fun(\J,\E)\arrow[r,"p_{\circ}"]\arrow[d,"f^{\circ}"]\pb & \Fun(\J,\B)\arrow[d,"f^{\circ}"]\\
\Fun(\I,\E)\arrow[r,"p_{\circ}"] & \Fun(\I,\B)
\end{tikzcd}
\end{equation*}
is cartesian.
\end{lemma}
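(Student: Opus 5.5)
To show the square is cartesian, I will compare fibers of the two horizontal maps: for each $\beta\colon\J\rightarrow\B$, it suffices that $f^{\circ}$ induces an equivalence
\begin{equation*}
\Fun(\J,\E)\times_{\Fun(\J,\B)}\{\beta\}\longrightarrow\Fun(\I,\E)\times_{\Fun(\I,\B)}\{\beta f\}.
\end{equation*}
This suffices because a map of fibrations over $\Fun(\J,\B)$ is an equivalence if and only if it is one on all fibers. Here the right-hand fiber is taken along $f^{\circ}$, and the pullback of $\Fun(\I,\E)\rightarrow\Fun(\I,\B)$ along $f^{\circ}$ has exactly these fibers.

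\textbf{Step 1: identify the fibers.} Since $p$ is a left fibration, $p_{\circ}\colon\Fun(\J,\E)\rightarrow\Fun(\J,\B)$ is again a left fibration, so its fibers are anima. Straighten $\beta^{\ast}p$ to a functor $F_{\beta}\colon\J\rightarrow\An$. Lifts of $\beta$ along $p$ are sections of the left fibration $\beta^{\ast}\E\rightarrow\J$, so the left-hand fiber is
\begin{equation*}
\Fun_{/\J}(\J,\beta^{\ast}\E)\simeq\lim_{\J}F_{\beta},
\end{equation*}
using that sections of a left fibration compute the limit of its straightening. Likewise, since $(\beta f)^{\ast}p\simeq f^{\ast}\beta^{\ast}p$ straightens to $F_{\beta}\circ f$, the right-hand fiber is $\lim_{\I}F_{\beta}f$.

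\textbf{Step 2: check the comparison map.} I will verify that the map between fibers induced by $f^{\circ}$ is the canonical restriction $\lim_{\J}F_{\beta}\rightarrow\lim_{\I}F_{\beta}f$. This follows from the naturality of the equivalence between sections and limits under pullback along $f$. Since $f$ is coinitial, this restriction is an equivalence, which finishes the proof.

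\textbf{Main obstacle.} The delicate point is the naturality in Step 2: the identification of sections with limits has to be compatible with restriction along $f$. To handle it cleanly, I will phrase everything in terms of cartesian squares. Write $\beta^{\ast}\E=\Un(F_{\beta})$. Then
\begin{equation*}
\Fun_{/\J}(\J,\Un F)\simeq\Map_{\Fun(\J,\An)}(\ast,F),
\end{equation*}
and restriction along $f$ corresponds to the map $\Map(\ast,F)\rightarrow\Map(f^{\ast}\ast,f^{\ast}F)$. Coinitiality of $f$ says precisely that $\ast\simeq\colim$ is preserved, i.e.\ that $f_{!}\ast\simeq\ast$, equivalently that $\lim_{\J}\simeq\lim_{\I}f^{\ast}$. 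The comparison map is then the adjunction equivalence $\Map(\ast,F)\simeq\Map(f_{!}\ast,F)\simeq\Map(\ast,f^{\ast}F)$.
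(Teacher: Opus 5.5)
Your proposal is correct, but it takes a different route from the paper's.

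\textbf{The paper's route.} The paper never looks at fibers. It applies the jointly conservative, limit-preserving functors $\Map_{\Cat}([n],-)$. After currying, this turns the square into the square of mapping anima with corners $\Map_{\Cat}(\J\times[n],\E)$, $\Map_{\Cat}(\J\times[n],\B)$, and so on. It then observes that $f\times[n]$ is again coinitial, and quotes the resulting anima-level statement (the dual of the cited theorem from Haugseng's notes) as a black box.

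\textbf{Your route.} You stay at the level of functor categories throughout:
\begin{itemize}
\item You use that $p_{\circ}$, and hence its base change along $f^{\circ}$, is a left fibration. So the comparison map is a map of left fibrations over $\Fun(\J,\B)$ and can be tested on fibers over objects $\beta$.
\item You identify each fiber with the anima of sections of $\beta^{\ast}\E\rightarrow\J$, that is, with $\lim_{\J}F_{\beta}$.
\item You use naturality of straightening under pullback to identify the comparison with the restriction $\lim_{\J}F_{\beta}\rightarrow\lim_{\I}F_{\beta}f$, which is an equivalence by coinitiality.
\end{itemize}
In effect you reprove the cited anima-level result directly, fiberwise over the objects of $\Fun(\J,\B)$. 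The left-fibration property of $p_{\circ}$ takes the place of the $[n]$-currying trick and of the check that $f\times[n]$ is coinitial.

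\textbf{Trade-offs.} The paper's argument is shorter given the reference. Yours is more self-contained. It relies on three standard facts: stability of left fibrations under $\Fun(\J,-)$, fiberwise detection of equivalences between left fibrations, and naturality of un/straightening in the base. Your $f_{!}\ast\simeq\ast$ formulation handles the last of these cleanly.

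\textbf{One thing to make explicit.} When you write this up, say that the fiber criterion you invoke is the one for maps between \emph{left} fibrations over a common base. For arbitrary functors over a base, it is false: for example, $\{0\}\sqcup\{1\}\rightarrow[1]$ over $[1]$ is a fiberwise equivalence but not an equivalence.
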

\begin{proof}[Proof.]
This can be checked by applying the functors $\Map_{\Cat}([n],-)\colon\Cat\rightarrow\An$, which are jointly conservative and limit-preserving. Then, since $f\times[n]\colon\I\times[n]\rightarrow\J\times[n]$ is again coinitial, currying reduces this to the underlying square of anima, which is cartesian by (the dual of) \cite[Theorem 6.5.13]{Haugseng}.
\end{proof}

In the following, we treat split simplicial objects according to the notation of \cite[Subsection 10.1.6]{kerodon}.

\begin{lemma}\label{SplitPointedSimplicialAnima}
Let $X_{\bullet}\colon\Delta^{\op}\rightarrow\An$ be a simplicial anima. Then:
\begin{enumerate}[label=(\arabic*)]
\item A pointing $x\in X_0$ naturally determines a pointed simplicial anima $(X_{\bullet},x_{\bullet})\colon\Delta^{\op}\rightarrow\An_{\ast/}$ lifting $X_{\bullet}$.
\item In the setting of (1), if $\overline{X}_{\bullet}\colon\Delta_+^{\op}\rightarrow\An$ is an augmentation extending $X_{\bullet}$, this naturally lifts to an augmentation $(\overline{X}_{\bullet},x_{\bullet})\colon\Delta_+^{\op}\rightarrow\An_{\ast/}$ extending $(X_{\bullet},x_{\bullet})$. Here, $X_{-1}$ is pointed by $x_{-1}\coloneqq d^{\ast}(x)$, where $d\colon[-1]\rightarrow[0]$ is the unique map in $\Delta_+$.
\item In the setting of (2), if $\overline{X}_{\bullet}$ is split and $x=s^{\ast}(x^{\prime})$ for some $x^{\prime}\in X_{-1}$, where $s\colon[0]\rightarrow[-1]$ is the unique extra degeneracy, then $(\overline{X}_{\bullet},x_{\bullet})$ is again split.
\end{enumerate}
\end{lemma}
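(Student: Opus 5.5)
The plan is to reduce all three parts to statements about left fibrations and to use Lemma \ref{CoinitialLeft}. The forgetful functor $\An_{\ast/}\rightarrow\An$ is a left fibration, namely the one classified by the identity. A lift of a diagram $X\colon\I\rightarrow\An$ to $\An_{\ast/}$ is therefore a section of the unstraightening $\int X\rightarrow\I$. By the universal property of the unstraightening, such lifts are equivalent to points of $\lim_{\I}X$, i.e. to cones $\ast\Rightarrow X$.

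For (1), take $\I=\Delta^{\op}$. The object $[0]$ is initial in $\Delta$, hence terminal in $\Delta^{\op}$. Formally, apply Lemma \ref{CoinitialLeft} to the coinitial functor $\{[0]\}\hookrightarrow\Delta^{\op}$, which is coinitial because $[0]$ is initial in $\Delta$, using the left fibration $\An_{\ast/}\rightarrow\An$. Taking fibers over $X_\bullet\in\Fun(\Delta^{\op},\An)$, this shows that lifts of $X_\bullet$ are equivalent to lifts of $X_0$, i.e. to points $x\in X_0$. Concretely, $x_n=\theta^\ast(x)$, where $\theta\colon[0]\rightarrow[n]$ is any map; the choice does not matter because the lift is determined by restriction to $\{[0]\}$. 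This construction is natural in $(X_\bullet,x)$.

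For (2), argue the same way with $\{[0]\}\hookrightarrow\Delta_+^{\op}$. Its coinitiality should be checked by hand. For $[n]\in\Delta_+$, consider the comma category of maps $[0]\rightarrow[n]$ in $\Delta_+$. It is nonempty and discrete for $n\ge0$, which is not contractible in general. So this is the wrong inclusion to use. Instead use $\Delta^{\op}\hookrightarrow\Delta_+^{\op}$ and compute cones directly: the limit over $\Delta_+^{\op}$ is $X_{-1}$, because $[-1]$ is terminal in $\Delta_+^{\op}$, being initial in $\Delta_+$. Hence lifts of $\overline{X}_\bullet$ correspond to points of $X_{-1}$, and restricting to $\Delta^{\op}$ sends such a point $y$ to $d^\ast y\in X_0$.

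At this point the plan needs adjusting. As literally stated, (2) has to produce a lift from a given $x\in X_0$, and $X_{-1}$ is pointed by ``$d^\ast(x)$''. This only makes sense if $d^\ast$ points from $X_0$ to $X_{-1}$, i.e. if $d^\ast$ is the augmentation map $X_0\rightarrow X_{-1}$. So the augmentation is a natural transformation from $X_\bullet$ to the constant diagram on $X_{-1}$, and $X_{-1}$ plays the role of a colimit vertex, not a limit. With this reading, I would construct the lift by postcomposition: the pointed diagram $(X_\bullet,x_\bullet)$ together with the cocone to $X_{-1}$ induces $(\overline{X}_\bullet,x_\bullet)$. Precisely, an augmented simplicial object is a cocone, i.e. a functor $(\Delta^{\op})^{\triangleright}\rightarrow\An$, and a section over the cone point is forced by transport along the left fibration. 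Equivalently, apply Lemma \ref{CoinitialLeft} to the inclusion $\{[0]\}\hookrightarrow\Delta_+^{\op}$ viewed inside the full subcategory of objects under $[0]$. The cleanest route is to note that $[0]$ is initial in $\Delta_+^{\op}\times_{\Delta_+^{\op}}(\Delta_+^{\op})_{[0]/}$ and to left Kan extend along the left fibration, using that $\An_{\ast/}\rightarrow\An$ lifts left Kan extensions from a point.

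For (3), I would verify the splitting directly. The extra degeneracies $s_{-1}$ must preserve the chosen points. Once $x=s^\ast(x')$, all basepoints $x_n$ are iterated images of $x'$, and the split identities in $\An$ lift because maps in $\An_{\ast/}$ that preserve basepoints are detected in $\An$ together with coherent path data, which is supplied by functoriality on the split augmented indexing category $\Delta_{-\infty}^{\op}$. Concretely, apply the argument of (1) to the category indexing split augmented simplicial objects, where $[-1]$ is initial in the opposite sense: a lift is determined by the point $x'\in X_{-1}$, and it restricts to the lift from (2) because $d^\ast x'$ corresponds to $x$. I expect this identification of the two lifts to be the main obstacle, together with pinning down which vertex is initial in each indexing category.
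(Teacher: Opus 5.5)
You have the paper's mechanism: lifts of a diagram along the left fibration $\An_{\ast/}\rightarrow\An$ are points of its limit, and Lemma \ref{CoinitialLeft} transports them along coinitial functors. However, parts (1) and (2) as written rest on a variance error, and in (2) this produces false intermediate claims. The object $[0]$ is \emph{terminal}, not initial, in $\Delta$ and in $\Delta_+$: every $[n]$, including $[-1]=\emptyset$, maps uniquely to it. Hence $[0]$ is \emph{initial} in $\Delta^{\op}$ and $\Delta_+^{\op}$. That is the real reason $\{[0]\}\hookrightarrow\Delta^{\op}$ is coinitial. If $[0]$ were terminal in $\Delta^{\op}$, as you assert, the inclusion would be cofinal, and Lemma \ref{CoinitialLeft} would not apply. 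So the conclusion of (1) is right, but your justification is inconsistent. Correspondingly, $x_n=\sigma^{\ast}(x)$ for the unique $\sigma\colon[n]\rightarrow[0]$; your $\theta^{\ast}(x)$ with $\theta\colon[0]\rightarrow[n]$ does not typecheck, since $\theta^{\ast}$ maps $X_n\rightarrow X_0$. In (2) you make two false claims. First, you say $\{[0]\}\hookrightarrow\Delta_+^{\op}$ is not coinitial; but you computed $\Map_{\Delta_+}([0],[n])$, whereas the relevant comma category is $\Map_{\Delta_+}([n],[0])\simeq\ast$. Second, you say lifts of $\overline{X}_{\bullet}$ correspond to points of $X_{-1}$ because $[-1]$ is terminal in $\Delta_+^{\op}$. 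A terminal object computes the colimit, not the limit; in fact $\lim_{\Delta_+^{\op}}\overline{X}_{\bullet}\simeq X_0$. For example, take $X_{\bullet}$ constant at a two-point set, augmented to $X_{-1}=\ast$: there are two lifts but only one point of $X_{-1}$.

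Your repair via the cocone $(\Delta^{\op})^{\triangleright}\simeq\Delta_+^{\op}$ and ``forced transport'' points the right way, but it omits the one input that makes it true. The extension over the cone point exists and is unique precisely because $\Delta^{\op}$ is weakly contractible, i.e.\ $\Delta^{\op}\hookrightarrow\Delta_+^{\op}$ is coinitial. Alternatively, you can use directly that $[0]$ is initial in $\Delta_+^{\op}$, which your ``cleanest route'' tacitly needs and your earlier computation denies. With this fact stated, (2) follows at once from Lemma \ref{CoinitialLeft}, exactly as in the paper, and $x_{-1}$ is the transport $d^{\ast}(x)$. Your (3) is then essentially correct, and slightly slicker than the paper's. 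The object $[-1]$ is a zero object of the split indexing category ($\Delta_{-\infty}$ in your notation, $\Delta_{\min}$ in the paper's), so $\{[-1]\}\hookrightarrow\Delta_{-\infty}^{\op}$ is coinitial. Hence a chosen splitting lifts uniquely with value $x'$ at $[-1]$. Its value at $[0]$ is $s^{\ast}(x')=x$; note $s^{\ast}$, not $d^{\ast}x'$, which again has the wrong direction. So its restriction to $\Delta_+^{\op}$ agrees with $(\overline{X}_{\bullet},x_{\bullet})$ by the uniqueness in (2). The paper runs the same argument through $\Delta_{\min,\le1}^{\op}\hookrightarrow\Delta_{\min}^{\op}$ and Quillen's Theorem A, which your initial-object observation avoids. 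The ``identification of the two lifts'' you flagged as the main obstacle is just this uniqueness.
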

\begin{proof}[Proof.]
The claims (1) and (2) are consequences of Lemma \ref{CoinitialLeft}, since $\{[0]\}^{\op}\hookrightarrow\Delta^{\op}$ resp. $\Delta^{\op}\hookrightarrow\Delta_+^{\op}$ are coinitial. To prove (3), pick a splitting $\Delta_+^{\op}\rightarrow\Delta_{\min}^{\op}\rightarrow\An$ of $\overline{X}_{\bullet}$ and consider the diagram
\begin{equation*}
\begin{tikzcd}
\{[0]\}^{\op}\arrow[r]\arrow[d] & \Delta_{\min,\le1}^{\op}\arrow[r]\arrow[d] & \An_{\ast/}\arrow[d]\\
\Delta_+^{\op}\arrow[r] & \Delta_{\min}^{\op}\arrow[r]\arrow[ur,dotted] & \An.
\end{tikzcd}
\end{equation*}
The top right horizontal functor is induced by pointing $X_{-1}$ at $x^{\prime}$ and $X_0$ at $x\coloneqq s^{\ast}(x^{\prime})$. The indicated lifting problem admits a solution as $\Delta_{\min,\le1}^{\op}\hookrightarrow\Delta_{\min}^{\op}$ is coinitial (see below), so that the composite lift determines the augmented pointed simplicial anima $(\overline{X}_{\bullet},x_{\bullet})$ by uniqueness and is split by construction.

To verify coinitiality, we check Quillen's Theorem A: for any $[n]\in\Delta_{\min}$, the relative slice category $\Delta_{\min,\le1}\times_{\Delta_{\min}}(\Delta_{\min})_{[n]/}$ is equivalent to the join $\{1,\dotsc,n\}\ast\Delta_{\min,\le1}$, hence weakly contractible.
\end{proof}

\begin{proposition}\label{FullyFaithfulSectionSplitColimit}
Let $F_{\bullet}\colon\C_{\bullet}\hookrightarrow\D_{\bullet}$ be a natural transformation of split simplicial objects in $\Cat$ that admits a retraction and is levelwise fully faithful. Then the induced functor on colimits $|F_{\bullet}|\colon|\C_{\bullet}|\rightarrow|\D_{\bullet}|$ admits a retraction and is fully faithful.
\end{proposition}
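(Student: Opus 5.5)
The retraction part is formal, so I would dispatch it first. If $R_\bullet\colon\D_\bullet\rightarrow\C_\bullet$ is a retraction of $F_\bullet$ in $\Fun(\Delta^{\op},\Cat)$, then functoriality of colimits gives $|R_\bullet|\circ|F_\bullet|\simeq|R_\bullet F_\bullet|\simeq\id$. The real content is full faithfulness.

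For full faithfulness I would use that split simplicial objects are colimit diagrams. Choose augmentations $\overline{\C}_\bullet,\overline{\D}_\bullet\colon\Delta_+^{\op}\rightarrow\Cat$ with extra degeneracies, so that $|\C_\bullet|\simeq\C_{-1}$ and $|\D_\bullet|\simeq\D_{-1}$. I am assuming $F_\bullet$ is a natural transformation of split objects, i.e. it extends to $\Delta_{\min}^{\op}$ compatibly with the splittings. Then $F_{-1}\colon\C_{-1}\rightarrow\D_{-1}$ identifies with $|F_\bullet|$. Moreover, $F_{-1}$ is a retract of $F_0$ in $\Fun([1],\Cat)$, via $d^{\ast}\colon X_{-1}\rightarrow X_0$ and $s^{\ast}\colon X_0\rightarrow X_{-1}$ with $s^{\ast}d^{\ast}\simeq\id$. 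Fully faithful functors are closed under retracts in the arrow category, since on mapping anima this is a retract of an equivalence. Hence $F_{-1}$ is fully faithful.

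If the hypothesis only gives a levelwise split transformation, without compatibility of the extra degeneracies, I would argue directly instead. Pick objects $x,y\in|\C_\bullet|$. By Lemma \ref{SplitPointedSimplicialAnima}, applied to the cores (which remain split), I would represent them by points of $\C_0^{\simeq}$ coming from $\C_{-1}$. Then I would compare $\Map_{|\C_\bullet|}(x,y)$ with the geometric realization of the split simplicial anima $\Map_{\C_\bullet}(x_\bullet,y_\bullet)$. That object is obtained from $\Ar(\C_\bullet)$ pointed by pairs of objects, via Lemma \ref{CoinitialLeft} applied to the left fibration $\Ar\rightarrow(-)^{\times2}$.

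Since split colimits are preserved by every functor, in particular by $\Ar(-)=\Fun([1],-)$ and by pullbacks, mapping anima of the colimit are the colimit of the mapping anima. Levelwise full faithfulness then gives an equivalence on realizations, and the retraction supplies the needed compatibility of basepoints.

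The main obstacle is this second scenario: identifying mapping anima in $|\C_\bullet|$ with realizations of levelwise mapping anima, which is exactly where splitness is used and where the pointing lemma is needed. I would first try to avoid it entirely via the retract argument at level $-1$, and fall back on the pointed-splitting argument only if the given data does not make $F$ compatible with the extra degeneracies.
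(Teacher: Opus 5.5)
Your retract argument is correct, but only when $F_\bullet$ is a map of $\Delta_{\min}^{\op}$-diagrams, i.e.\ commutes with the extra degeneracies. In that case $F_{-1}$ is a retract of $F_0$, and the retraction hypothesis is never used.

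That is not the reading the paper needs, for two reasons. First, the counit $\const_{X_0}\rightarrow X_\bullet$ in the remark after the proposition is only a counterexample if $F_\bullet$ may ignore the splittings: compatibility would force $s^{\ast}d^{\ast}=\id_{X_0}$, hence $X_0\cong X_{-1}$. Second, in Theorem \ref{MonadicityTheorem} the transformation $\mathrm{fact}$ commutes with the face and degeneracy maps, which lie in $\Fact$. It need not commute with the extra degeneracies of a $p$-splitting, which are mere functors.

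So everything rests on your second scenario, and there the argument breaks at ``the retraction supplies the needed compatibility of basepoints''. Write $s^{\ast}\colon\D_{-1}\rightarrow\D_0$ for the extra degeneracy and $d^{\ast}\colon\D_0\rightarrow\D_{-1}$ for the augmentation, as in the paper.
Lemma \ref{SplitPointedSimplicialAnima}(3) makes $\C_\bullet^{\simeq}$ pointed at $s^{\ast}x$ split (likewise for $y$). However, the basepoint that $F_\bullet$ induces on $\D_\bullet^{\simeq}$ is $F_0(s^{\ast}x)$, which only satisfies $d^{\ast}F_0(s^{\ast}x)\simeq F_{-1}(x)$.
Unless it lies in the same component of $\D_0^{\simeq}$ as $s^{\ast}F_{-1}(x)$, the pointed object need not be split. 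Then the realization of $\Map_{\D_\bullet}(Fx_\bullet,Fy_\bullet)$ need not be $\Map_{\D_{-1}}(F_{-1}x,F_{-1}y)$, and a retraction $R_\bullet$ gives no control over this. The paper's proof applies Lemma \ref{SplitPointedSimplicialAnima}(3) to $\D_\bullet$ pointed at $F_0(s^{\ast}y)$ at exactly this step, so it has the same gap.

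The gap cannot be closed, because in this reading the statement is false. Set up the counterexample as follows:
\begin{itemize}
\item Let $\C_\bullet=\ast$ be constant.
\item Let $\pi\colon\{q\}\sqcup S^1\rightarrow S^1$ send $q$ to the basepoint $b$ and be the identity on $S^1$, with section $\sigma$ the inclusion of the second summand.
\item Let $\D_\bullet$ be the \v{C}ech nerve of $\pi$. It is a split simplicial anima with colimit $S^1$ (also in $\Cat$, split colimits being absolute) and extra degeneracy $s^{\ast}=\sigma$.
\item Let $F_\bullet\colon\ast\rightarrow\D_\bullet$ be classified by $q\in\D_0$.
\end{itemize}
Then $F_k(\ast)=(q,\dotsc,q)$ lies in the summand $\{q\}\times_{S^1}\dotsb\times_{S^1}\{q\}\simeq(\Omega S^1)^{\times k}\simeq\mathbb{Z}^{k}$ of $\D_k$, which is discrete. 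So every $F_k$ is the inclusion of a contractible component and hence fully faithful, and the terminal map $\D_\bullet\rightarrow\ast$ is a retraction.
Yet $|F_\bullet|\colon\ast\rightarrow S^1$ is not fully faithful, as $\Omega S^1\simeq\mathbb{Z}$. Here $F_0(s^{\ast}\ast)=q$ and $s^{\ast}F_{-1}(\ast)=\sigma(b)$ lie in different components of $\D_0$.

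What your fallback does establish, without using the retraction at all, is full faithfulness under an additional hypothesis: $F_0(s^{\ast}x)\simeq s^{\ast}F_{-1}(x)$ in $\D_0$ for every object $x$ of $\C_{-1}$. Then Lemma \ref{SplitPointedSimplicialAnima}(3) applies on both sides. For $\mathrm{fact}$ in Theorem \ref{MonadicityTheorem}, this hypothesis would need a separate argument.
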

\begin{proof}[Proof.]
Extend $F_{\bullet}$ to a natural transformations of augmented simplicial objects by taking colimits. These augmented simplicial objects are split by assumption, in particular absolute colimits. We need to prove that $F_{-1}\colon\C_{-1}\rightarrow\D_{-1}$ is fully faithful, which can be checked by applying the functors $\Map_{\Cat}([n],-)\colon\Cat\rightarrow\An$, which jointly detect fully faithfulness. Thus, we reduce to the case of (split) augmented simplicial anima.

In this case, we have to check that $F_{-1}$ induces an injection on $\pi_0$ and bijections on $\pi_n,\,n\ge1$ for all choices of basepoints. The claim about $\pi_0$ is immediate since this is a map in $\Set$ that admits a retraction. To verify the claim about $\pi_n$, pick a basepoint $y\in\C_{-1}$ and set $F_{-1}(y)\in\D_{-1}$, so that (2) of Lemma \ref{SplitPointedSimplicialAnima} lifts $F_{\bullet}\colon\C_{\bullet}\rightarrow\D_{\bullet}$ to a natural transformation of augmented pointed simplicial anima, which are again split by (3) of Lemma \ref{SplitPointedSimplicialAnima}. These absolute colimits are thus preserved by the functors $\pi_n\colon\An_{\ast/}\rightarrow\Ab,\,n\ge1$.
\end{proof}
\begin{remark}
The retraction of $F_{\bullet}$ was only needed to get injectivity on $\pi_0$, but this additional assumption is not redundant, e.g. given any non-constant split simplicial set $X_{\bullet}$, the counit map $\const_{X_0}\rightarrow X_{\bullet}$ is a levelwise injection of split simplicial sets, but induces the non-injective map $X_0\twoheadrightarrow X_{-1}$ on colimits.
\end{remark}

\begin{proposition}\label{FreeForgetful}
There is an adjunction $\Ar(-)^{\dagger}\colon\Cat\rightleftarrows\Fact\colon p$. Here, $\Ar(\C)$ is equipped with the canonical factorization system where the egressive morphisms are those projecting to equivalences on the source and the ingressive morphisms are those projecting to equivalences on the target.
\end{proposition}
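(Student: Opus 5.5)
We have to establish three things: that $\Ar(\C)=\Fun([1],\C)$ carries the stated factorization system, that $\Ar$ is functorial into $\Fact$, and a natural equivalence $\Map_{\Fact}(\Ar(\C)^{\dagger},\D^{\dagger})\simeq\Map_{\Cat}(\C,\D)$. We will exhibit a unit and a counit and check the triangle identities, rather than computing mapping anima directly.

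\emph{The factorization system.} A morphism in $\Ar(\C)$ from $x\to y$ to $x'\to y'$ is a commutative square. We factor it through $x\to y'$ by two squares: first $(\id_x,\,y\to y')$, then $(x\to x',\,\id_{y'})$. The first is egressive, since it is an equivalence on the source. The second is ingressive, since it is an equivalence on the target.

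For orthogonality we use the cartesian fibrations $\ev_0,\ev_1\colon\Ar(\C)\to\C$. Egressive morphisms are exactly the $\ev_0$-fiberwise morphisms. Ingressive morphisms are exactly the $\ev_1$-fiberwise morphisms. Note also that $\ev_0$ is a cartesian fibration whose cartesian morphisms are precisely the ingressive ones, since the target is fixed. Alternatively, we apply Proposition \ref{FactorizationSystem} to $\ev_1\colon\Ar(\C)\to\C^{\sharp}$, with the fibers $\C_{/c}$ carrying the factorization system (equivalences, all). Cocartesian transport $\C_{/c}\to\C_{/d}$ preserves equivalences, so the hypotheses hold. The egressive morphisms obtained there are cocartesian lifts followed by fiberwise equivalences, i.e.\ exactly the morphisms that are equivalences on the source. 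This settles orthogonality and uniqueness at once. Functoriality of $\Ar$ in $\C$ is clear, since any functor preserves both classes. We therefore get a functor $\Ar(-)^{\dagger}\colon\Cat\to\Fact$ by restricting $\Fun([1],-)$ through the faithful inclusion of Proposition \ref{ForgetfulFunctors}.

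\emph{Unit and counit.} The unit $\eta_{\C}\colon\C\to\Ar(\C)$ is the degeneracy $c\mapsto\id_c$. The counit $\varepsilon_{\D^{\dagger}}\colon\Ar(\D)^{\dagger}\to\D^{\dagger}$ sends $f$ to the middle object of its egressive–ingressive factorization, i.e.\ the composite $\Fun([1],\D)\xrightarrow{\mathrm{fact}}\Fun([2],\D)\xrightarrow{\ev_1}\D$, using the section of \cite[Proposition 5.2.8.17]{HTT}. We must check that $\varepsilon$ preserves both classes. Take an egressive morphism of arrows, say $(\id_x,\,y\to y')$. The induced map on middle objects $m\to m'$ is computed from $x\twoheadrightarrow m'$ together with the unique lift. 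Since $x\twoheadrightarrow m$ is egressive and $x\twoheadrightarrow m\to m'$ is egressive, right cancellation shows that $m\to m'$ is egressive. The ingressive case is dual, using left cancellation. Naturality of $\varepsilon$ in $\D^{\dagger}$ holds because functors in $\Fact$ preserve factorizations, and hence commute with $\mathrm{fact}$ up to canonical equivalence.

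\emph{Triangle identities and the main obstacle.} The composite $\D\xrightarrow{\eta}\Ar(\D)\xrightarrow{\varepsilon}\D$ is $d\mapsto$ the middle object of the factorization of $\id_d$, which is $d$. For the other identity, $\Ar(\C)\xrightarrow{\Ar(\eta)}\Ar(\Ar(\C))\xrightarrow{\varepsilon}\Ar(\C)$, the map $\Ar(\eta)$ sends $f\colon x\to y$ to the square $\id_x\Rightarrow\id_y$. Its factorization passes through the middle object $f$, which gives the identity.

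The main obstacle is making these identifications coherent rather than merely objectwise. To handle this, we use Proposition \ref{ForgetfulFunctors}. Since $p$ is faithful and mapping anima in $\Fact$ are unions of components of those in $\Cat$, it suffices to produce the triangle homotopies in $\Cat$, and both composites are identified there via the fully faithful section $\mathrm{fact}$. Concretely, $\varepsilon\circ\eta\simeq\id$ because $\mathrm{fact}\circ s_0$ is the degenerate $2$-simplex. For the second identity, we first check that $\mathrm{fact}_{\Ar(\C)}$ applied to $\Ar(\eta)(f)$ is the diagram $\id_x\Rightarrow f\Rightarrow\id_y$, natural in $f$. We do this by verifying that this diagram lies in the essential image of $\mathrm{fact}$, and full faithfulness of $\mathrm{fact}$ then gives the natural identification.
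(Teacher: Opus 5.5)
Your proposal is correct and follows the same route as the paper, which defers to \cite[Proposition 4.8]{HHNL}. In both, the unit is the degeneracy $\C\to\Ar(\C)$ and the counit is $\ev_1\circ\mathrm{fact}$; one checks that the counit preserves egressive and ingressive morphisms (your cancellation argument) and verifies the triangle identities. Your use of Proposition \ref{FactorizationSystem} to establish the factorization system on $\Ar(\C)$, and your coherence argument via full faithfulness of $\mathrm{fact}$ and faithfulness of $p$, supply detail the paper leaves to the reader. One harmless slip: $\ev_1\colon\Ar(\C)\to\C$ is in general a cocartesian fibration, and cartesian only when $\C$ has pullbacks, but your argument only uses its cocartesian transport.
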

\begin{proof}[\protect{Proof (\cite[Proposition 4.8]{HHNL})}]
The unit transformation $\eta\colon\C\rightarrow\Ar(\C)$ is induced by pulling back along $[1]\rightarrow\ast$. The counit transformation $\varepsilon\colon\Ar(\C)^{\dagger}\rightarrow\C^{\dagger}$ is given by the composite $\Ar(\C)\stackrel{\mathrm{fact}}{\hookrightarrow}\Fun([2],\C)\stackrel{\ev_1}{\rightarrow}\C$, which one checks to be a morphism in $\Fact$. The triangle identities may then be verified by hand.
\end{proof}

\begin{theorem}\label{MonadicityTheorem}
The forgetful functor $p\colon\Fact\rightarrow\Cat$ is monadic.
\end{theorem}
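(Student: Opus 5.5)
We apply the Barr--Beck--Lurie monadicity theorem to the adjunction $\Ar(-)^{\dagger}\dashv p$ of Proposition \ref{FreeForgetful}. Two things need checking. First, $p$ must be conservative, which is part of Proposition \ref{ForgetfulFunctors}. Second, $\Fact$ must admit colimits of $p$-split simplicial objects and $p$ must preserve them. Since $\Fact$ is presentable, these colimits exist, but they are not obviously computed underlying. So the real content is this: given a simplicial object $\C_\bullet^{\dagger}$ in $\Fact$ whose underlying simplicial category $\C_\bullet$ extends to a split augmented object with colimit $\C_{-1}=|\C_\bullet|$, we will construct a factorization system on $\C_{-1}$ making the augmentation a colimit in $\Fact$.

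To construct the factorization system, we use the description of a factorization system as a fully faithful section $\mathrm{fact}\colon\Fun([1],\C)\hookrightarrow\Fun([2],\C)$ of $d_1$ whose essential image consists of diagrams $x\twoheadrightarrow y\rightarrowtail z$ \cite[Proposition 5.2.8.17]{HTT}. Split colimits are absolute, so $\Fun([1],-)$ and $\Fun([2],-)$ preserve the split augmented object: $\Fun([k],\C_{-1})\simeq|\Fun([k],\C_\bullet)|$, with split augmentations. The functors $\mathrm{fact}_n$ are natural in $n$, because the face and degeneracy maps lie in $\Fact$ and preserve factorizations. Hence they form a levelwise fully faithful map of simplicial objects $\Fun([1],\C_\bullet)\to\Fun([2],\C_\bullet)$ with retraction $d_1$. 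The splitting data are not a priori compatible with $\mathrm{fact}$. However, Proposition \ref{FullyFaithfulSectionSplitColimit} only needs each simplicial object to be split individually, together with the retraction, provided we interpret it as a statement about the induced map on geometric realizations; this is how I would read and apply it. It then produces a fully faithful section $|\mathrm{fact}_\bullet|$ of $d_1$ on $\C_{-1}$.

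Next we identify the essential image. Define the egressive and ingressive morphisms of $\C_{-1}$ as those in the essential image of $(\C_0)_{\eg}$ resp. $(\C_0)_{\ing}$ under the augmentation map $\C_0\to\C_{-1}$, closed up to equivalence. Every object of $\Fun([1],\C_{-1})$ comes from $\Fun([1],\C_0)$ through the split augmentation, so every factorization produced by the section has egressive-then-ingressive shape. For the converse, a diagram $x\twoheadrightarrow y\rightarrowtail z$ in $\C_{-1}$ lifts to such a diagram in $\C_0$, using the extra degeneracy $s\colon\C_{-1}\to\C_0$ applied to lifts.

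The step I expect to be the main obstacle is this last identification. The extra degeneracy is not a morphism in $\Fact$, so it need not carry egressive morphisms to egressive ones. To handle it, I would define the classes intrinsically: $\C_{-1,\eg}$ is the set of morphisms $f$ such that $|\mathrm{fact}|(f)$ has an invertible second leg. I would then check that these classes are stable under composition and that the augmentation $\C_0^{\dagger}\to\C_{-1}$ preserves them, using the naturality of $\mathrm{fact}$. With that, the characterization of \cite[Proposition 5.2.8.17]{HTT} applies and gives the factorization system.

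Finally, the universal property. A cocone $\C_\bullet^{\dagger}\to\D^{\dagger}$ in $\Fact$ induces a functor $F\colon\C_{-1}\to\D$, and $F$ must preserve egressive and ingressive morphisms. Morphisms in $\C_{-1}$ are jointly generated, up to equivalence, by images of morphisms from $\C_0$. Since the factorization on $\C_{-1}$ is induced from $\C_0$, $F$ preserves factorizations and hence both classes. Because $p$ is faithful, this is a condition rather than extra data. So $p$ creates, and in particular preserves, $p$-split geometric realizations, and Barr--Beck--Lurie applies.
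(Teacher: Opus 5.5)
Your outline is the paper's proof. It runs Barr--Beck--Lurie for $\Ar(-)^{\dagger}\dashv p$, takes conservativity from Proposition \ref{ForgetfulFunctors}, applies Proposition \ref{FullyFaithfulSectionSplitColimit} to $\mathrm{fact}\colon\Fun([1],\C_\bullet)\hookrightarrow\Fun([2],\C_\bullet)$ with retraction $d_1^{\ast}$, defines the classes on $\C_{-1}$ as images from $\C_0$, and checks the universal property using faithfulness of $p$.

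\textbf{The gap.} It sits at the step you flag yourself. The reading of Proposition \ref{FullyFaithfulSectionSplitColimit} you adopt is false: source and target split individually, transformation not compatible with the extra degeneracies. A counterexample already lives in $\An\subseteq\Cat$:
\begin{itemize}
\item Let $G$ be a nontrivial discrete group and $Y_\bullet$ the \v{C}ech nerve of $Y_0=BG\sqcup\ast\to BG$ (identity on $BG$, basepoint on $\ast$). It is split via the inclusion of $BG$.
\item Let $F\colon\const_{\ast}\to Y_\bullet$ pick out $\ast\in Y_0$. The summand $\ast\times_{BG}\dotsb\times_{BG}\ast\simeq G^{n}$ of $Y_n$ is discrete, so each $F_n$ is the inclusion of a contractible component, hence fully faithful.
\item $Y_\bullet\to\const_{\ast}$ is a retraction of $F$, but $|F|\colon\ast\to BG$ is not fully faithful.
\end{itemize}
The proof of the proposition uses compatibility when it applies Lemma \ref{SplitPointedSimplicialAnima}~(3) to the target: the basepoint $F_0(s y)$ must be of the form $s(y')$. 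Once $F$ is compatible, the statement is immediate anyway, because $F_{-1}$ is then a retract of $F_0$.

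For $\mathrm{fact}$, compatibility would mean the extra degeneracies of $\C_\bullet$ preserve egressive and ingressive morphisms. This fails already for bar resolutions. There the extra degeneracies are units $\D\to\Ar(\D)$, $x\mapsto\id_x$, which send a non-invertible egressive morphism to a non-egressive one.

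\textbf{What is left unproven.} The central claim is that $|\mathrm{fact}_\bullet|$ is fully faithful, which in effect means $d^{\ast}(e)\perp d^{\ast}(m)$ in $\C_{-1}$ for $e$ egressive and $m$ ingressive in $\C_0$.
\begin{itemize}
\item Your intrinsic redefinition of the classes does not address this. Full faithfulness of the section is exactly what produces orthogonality.
\item The obvious retract argument also fails. Orthogonality $e'\perp m'$ in $\C_{-1}$ is a retract, via $s$ and $d$, of $s(e')\perp s(m')$ in $\C_0$, but $s(e')$ need not be egressive.
\end{itemize}
The paper's proof invokes Proposition \ref{FullyFaithfulSectionSplitColimit} at exactly this point, for the same non-compatible transformation. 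So you share this gap with the paper rather than depart from it, but reinterpreting the hypothesis does not close it.

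\textbf{A possible repair.} What does pass to the realization is invertibility of natural transformations built from $\mathrm{fact}_\bullet$ and the $d_i^{\ast},s_i^{\ast}$. This holds because $|{-}|$ is functorial in natural transformations and commutes with $-\times[1]$. Examples, with $L,R$ the two legs of the factorization:
\begin{itemize}
\item $\mathrm{fact}\circ L\Rightarrow s_1^{\ast}\circ L$;
\item $s_0^{\ast}\circ R\Rightarrow\mathrm{fact}\circ R$;
\item triviality of $\mathrm{fact}$ on identity morphisms.
\end{itemize}
One would then deduce unique lifting in $\C_{-1}$ from these identities. For $e\colon a\to b$ and $m\colon x\to y$, $\mathrm{fact}$ gives a section of $\Map(b,x)\to\Map_{\Ar(\C_{-1})}(e,m)$. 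Naturality, applied to the factorization of a square through $\id_x$, should show it is also a retraction.
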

\begin{proof}[Proof.]
The forgetful functor $p\colon\Fact\rightarrow\Cat$ is a right adjoint by Proposition \ref{FreeForgetful} and conservative by Proposition \ref{ForgetfulFunctors}. To check the conditions of the monadicity theorem \cite[Theorem 4.7.3.5]{HA}, it remains to prove that any $p$-split simplicial object $\C_{\bullet}^{\dagger}\colon\Delta^{\op}\rightarrow\Fact$ admits a colimit in $\Fact$ that is preserved by $p$. To this end, consider a splitting $\C_{\bullet}\colon\Delta_+^{\op}\rightarrow\Delta_{\min}^{\op}\rightarrow\Cat$ of its image $p\C_{\bullet}^{\dagger}$.

The natural transformation $\mathrm{fact}\colon\Fun([1],\C_{\bullet})\hookrightarrow\Fun([2],\C_{\bullet})$ of split simplicial objects given by the functorial egressive-ingressive factorizations is levelwise fully faithful and admits a retraction given by $d_1^{\ast}$, hence Proposition \ref{FullyFaithfulSectionSplitColimit} implies that the induced map on colimits $\mathrm{fact}\colon\Fun([1],\C_{-1})\hookrightarrow\Fun([2],\C_{-1})$ is fully faithful and admits a retraction given by $d_1^{\ast}$. Furthermore, this fits into the diagram
\begin{equation*}
\begin{tikzcd}
\Fun([1],\C_0)\arrow[d,"d^{\ast}"]\arrow[r,hookrightarrow,"\mathrm{fact}"] & \Fun([2],\C_0)\arrow[d,"d^{\ast}"]\\
\Fun([1],\C_{-1})\arrow[r,hookrightarrow,"\mathrm{fact}"] & \Fun([2],\C_{-1}).
\end{tikzcd}
\end{equation*}
The vertical functors are essentially surjective (since they admit sections), so we obtain an induced factorization system $\C_{-1}^{\dagger}$ whose egressive resp. ingressive morphisms are those equivalent to images of egressive resp. ingressive morphisms under $d^{\ast}\colon\C_0\rightarrow\C_{-1}$. Thus, we have lifted the augmented simplicial object $\C_{\bullet}\colon\Delta_+^{\op}\rightarrow\Cat$ through $p\colon\Fact\rightarrow\Cat$ and need to prove that this is also a colimit diagram in $\Fact$. To this end, let $\D^{\dagger}$ be a category equipped with a factorization system and consider the commutative square
\begin{equation*}
\begin{tikzcd}
\Map_{\Fact}(\C_{-1}^{\dagger},\D^{\dagger})\arrow[r]\arrow[d,hookrightarrow] & \lim_{[n]\in\Delta}\Map_{\Fact}(\C_n^{\dagger},\D^{\dagger})\arrow[d,hookrightarrow]\\
\Map_{\Cat}(\C_{-1},\D)\arrow[r,"\sim"] & \lim_{[n]\in\Delta}\Map_{\Cat}(\C_n,\D).
\end{tikzcd}
\end{equation*}
The vertical maps are monomorphisms, so the upper horizontal map is a monomorphism by left cancellation and it remains to prove that it is essentially surjective. This, however, follows since a functor $f\colon\C_{-1}\rightarrow\D$ preserves egressive resp. ingressive morphisms if and only if the composite $fd^{\ast}\colon\C_0\rightarrow\C_{-1}\rightarrow\D$ preserves egressive resp. ingressive morphisms by the preceding description of $\C_{-1}^{\dagger}$.
\end{proof}


\printbibliography

@book{HTT,
    author = {Lurie, Jacob},
    year = {2009},
    title = {Higher Topos Theory},
    series = {Annals of Mathematics Studies},
    number = {170},
    publisher = {Princeton University Press}
}

@misc{kerodon,
    title = {Kerodon},
    author = {Lurie, Jacob},
    howpublished = {\url{https://kerodon.net}},
    year = {2018}
}

@misc{HA,
    title = {Higher Algebra},
    author = {Lurie, Jacob},
    howpublished = {\url{https://www.math.ias.edu/~lurie/papers/HA.pdf}},
    year = {2017}
}

@misc{BarkanSteinebrunner,
    title = {On the definition of factorization systems},
    author = {Barkan, Shaul and Steinebrunner, Jan},
    year = {2025},
    howpublished = {\url{https://www.jan-steinebrunner.com/wp-content/uploads/2025/12/On_the_definition_of_factorization_systems.pdf}}
}

@article{ChuHaugseng,
    title = {Homotopy-coherent algebra via Segal conditions},
    journal = {Advances in Mathematics},
    volume = {385},
    eid = {107733},
    year = {2021},
    author = {Chu, Hongyi and Haugseng, Rune},
}

@misc{Shah,
    title = {Parametrized higher category theory II: Universal constructions}, 
    author = {Shah, Jay},
    year = {2022},
    eprint = {2109.11954},
    archivePrefix = {arXiv} 
}

@article{HHNL,
    title = {Two-variable fibrations, factorisation systems and $\infty $-categories of spans},
    volume = {11},
    journal = {Forum of Mathematics, Sigma},
    author = {Haugseng, Rune and Hebestreit, Fabian and Linskens, Sil and Nuiten, Joost},
    year = {2023},
    eid = {e111}
}

@misc{Juran,
    title = {On orthogonal factorization systems and double categories}, 
    author = {Juran, Branko},
    year = {2025},
    eprint = {2501.01363},
    archivePrefix = {arXiv}
}

@misc{Fernandes,
    title = {Stable moduli spaces of odd-dimensional manifold triads}, 
    author = {Fernandes, João Lobo},
    year = {2025},
    eprint = {2510.17986},
    archivePrefix = {arXiv}
}

@misc{Haugseng,
    title = {Yet another introduction to $\infty$-Categories},
    author = {Haugseng, Rune},
    year = {2025},
    howpublished = {\url{https://runegha.folk.ntnu.no/naivecat_web.pdf}}
}

@misc{Lurie,
    author = {Lurie, Jacob},
    title = {$(\infty,2)$-Categories and the Goodwillie Calculus I}, 
    year = {2009},
    eprint = {0905.0462},
    archivePrefix = {arXiv}
}

@article{ABFJ,
    title = {Left-exact localizations of $\infty$-topoi I: Higher sheaves},
    journal = {Advances in Mathematics},
    volume = {400},
    eid = {108268},
    year = {2022},
    author = {Anel, Mathieu and Biedermann, Georg and Finster, Eric and Joyal, André}
}

@misc{Soergel,
    title = {A braided monoidal $(\infty,2)$-category of Soergel bimodules}, 
    author = {Liu, Yu Leon and Mazel-Gee, Aaron and Reutter, David and Stroppel, Catharina and Wedrich, Paul},
    year = {2024},
    eprint = {2401.02956},
    archivePrefix = {arXiv}
}

@misc{Kositsyn,
    title = {Factorization systems in $\infty$-categories}, 
    author = {Kositsyn, Roman},
    year = {2021},
    eprint = {2105.14654},
    archivePrefix = {arXiv}
}

@article{KorostenskiTholen,
    title = {Factorization systems as Eilenberg-Moore algebras},
    journal = {Journal of Pure and Applied Algebra},
    volume = {85},
    number = {1},
    pages = {57-72},
    year = {1993},
    author = {Korostenski, Mareli and Tholen, Walter}
}

@article{BarwickSchommerPries,
    title = {On the unicity of the theory of higher categories},
    author = {Barwick, Clark and Schommer-Pries, Christopher},
    journal = {Journal of the American Mathematical Society},
    volume = {34},
    number = {4},
    pages = {1011--1058},
    year = {2021}
}

@article{JohnsonFreydScheimbauer,
    title = {(Op)lax natural transformations, twisted quantum field theories, and “even higher” Morita categories},
    journal = {Advances in Mathematics},
    volume = {307},
    pages = {147-223},
    year = {2017},
    author = {Johnson-Freyd, Theo and Scheimbauer, Claudia}
}

@inproceedings{JoyalTierney,
    title = {Quasi-categories vs Segal spaces},
    author = {Joyal, Andr{\'e} and Tierney, Myles},
    series = {Contemporary Mathematics},
    booktitle = {Categories in Algebra, Geometry and Mathematical Physics},
    volume = {431},
    pages = {277-326},
    year = {2007},
    publisher = {American Mathematical Society}
}

@article{HebestreitSteinebrunner,
    author = {Hebestreit, Fabian and Steinebrunner, Jan},
    title = {A Short Proof That Rezk’s Nerve Is Fully Faithful},
    journal = {International Mathematics Research Notices},
    volume = {2025},
    number = {4},
    year = {2025},
}

@misc{LoubatonRuit,
    title = {On the squares functor and the Gaitsgory-Rozenblyum conjectures}, 
    author = {Loubaton, Félix and Ruit, Jaco},
    year = {2025},
    eprint = {2507.07807},
    archivePrefix = {arXiv}
}

@article{Ramzi,
    title = {An elementary proof of the naturality of the Yoneda embedding},
    author = {Ramzi, Maxime},
    journal = {Proceedings of the American Mathematical Society},
    volume = {151},
    number = {10},
    pages = {4163--4171},
    year = {2023}
}

@book{Hu,
    title = {Homotopy Theory},
    author = {Hu, Sze-Tsen},
    year = {1959},
    publisher = {Academic Press}
}

@misc{Joyal,
    title = {Notes on Logoi},
    author = {Joyal, André},
    year = {2008},
    howpublished = {\url{https://www.math.uchicago.edu/~may/IMA/JOYAL/Joyal.pdf}}
}

@misc{ORW,
    title={Cores and localizations of $(\infty,\infty)$-categories}, 
    author={Ozornova, Viktoriya and Rovelli, Martina and Walde, Tashi},
    year={2026},
    eprint={2603.11005},
    archivePrefix={arXiv}
}

@misc{AGH,
    title={Straightening for lax transformations and adjunctions of $(\infty,2)$-categories}, 
    author={Abellán, Fernando and Gagna, Andrea and Haugseng, Rune},
    year={2024},
    eprint={2404.03971},
    archivePrefix={arXiv}
}

@misc{AHM,
    title = {Free fibrations, lax colimits and Kan extensions for $(\infty,2)$-categories}, 
    author = {Abellán, Fernando and Haugseng, Rune and Martini, Louis},
    year = {2026},
    eprint = {2602.07604},
    archivePrefix = {arXiv}
}

@article{Segalification,
    title = {Segalification and the Boardman--Vogt tensor product},
    author = {Barkan, Shaul and Steinebrunner, Jan},
    journal = {Algebraic \& Geometric Topology},
    volume = {25},
    number = {9},
    pages = {5439--5462},
    year = {2025},
    publisher = {Mathematical Sciences Publishers}
}

@misc{CampionMaehara,
    title = {A model-independent Gray tensor product for $(\infty,2)$-categories}, 
    author = {Campion, Timothy and Maehara, Yuki},
    year = {2023},
    eprint = {2304.05965},
    archivePrefix = {arXiv}
}
\end{document}